\begin{document}

\title{Robust solutions for stochastic and distributionally robust chance-constrained binary knapsack problems \thanks{This research was supported by the National Research Foundation of Korea (NRF) Grant funded by the Korea government (MSIT) (No. 2019R1F1A1061361).}
}
\titlerunning{Robust solutions for chance-constrained binary knapsack problem}  

\author{Jaehyeon Ryu \and Sungsoo Park}

\institute{Sungsoo Park \at
              Department of Industrial and Systems Engineering, KAIST, 291 Daehak-ro, Yuseong-gu, Daejeon 34141, Republic of Korea \\
              \email{sspark@kaist.ac.kr}           
           \and
           Jaehyeon Ryu \at Department of Industrial and Systems Engineering, KAIST, 291 Daehak-ro, Yuseong-gu, Daejeon 34141, Republic of Korea \\
              \email{jhryu034@gmail.com}              
}
\date{Received: date / Accepted: date}

\maketitle

\begin{abstract}
We consider chance-constrained binary knapsack problems, where the weights of items are independent random variables with the means and standard deviations known. The chance constraint can be reformulated as a second-order cone constraint under some assumptions for the probability distribution of the weights. The problem becomes a second-order cone-constrained binary knapsack problem, which is equivalent to a robust binary knapsack problem with an ellipsoidal uncertainty set. 

We demonstrate that optimal solutions to robust binary knapsack problems with inner and outer polyhedral approximations of the ellipsoidal uncertainty set can provide both upper and lower bounds on the optimal value of the second-order cone-constrained binary knapsack problem, and they can be obtained by solving ordinary binary knapsack problems repeatedly. Moreover, we prove that the solution providing the upper bound converges to the optimal solution to the second-order cone-constrained binary knapsack problem as the approximation of the uncertainty set becomes more accurate. Based on this, a pseudo-polynomial time algorithm is obtained under the assumption that the coefficients of the problem are integer-valued. We also propose an exact algorithm, which iteratively improves the accuracy of the approximation until an exact optimal solution is obtained. The exact algorithm also runs in pseudo-polynomial time. Computational results are presented to show the quality of the upper and lower bounds and efficiency of the exact algorithm compared to CPLEX and a previous study.
\keywords{Chance-constrained binary knapsack problem \and Nonlinear combinatorial optimization \and Integer second-order cone-constrained programming \and Stochastic programming \and distributionally robust optimization}
\end{abstract}

\section{Introduction} \label{sect01}

The binary knapsack problem (BKP) is one of the most widely studied combinatorial optimization problems. The problem consists of a knapsack with a nonnegative integer capacity $b$ and a set of items ${N}:=\{1,\cdots,n\}$. Each item $j \in {N}$ has a nonnegative integer profit $p_{j}$ and a nonnegative integer weight $a_{j}$. The objective is to select the items that maximize the profit sum while satisfying the knapsack capacity constraint. There has been extensive research on the BKP and its variants due to their importance in theory and applications. We refer to Martello and Toth \cite{Martello90} and Kellerer et al. \cite{Kellerer04} for more details on the BKP. 

Here, we consider the BKP with uncertainty in the item weights. Optimization problems with uncertainty in data have drawn considerable attention recently. Stochastic programming \cite{Shapiro09,Birge11} and robust optimization \cite{Bental09,Bertsimas11} are two major approaches taken to handle data uncertainty in optimization problems. In stochastic programming, each uncertain coefficient is assumed to be a random variable with some specified probability distribution. The stochastic BKP with simple recourse \cite{Cohn98,Fortz08,Kosuch10,Merzifonluoglu12} reflects the expected value of penalty costs incurred by the total weight exceeding the capacity to the objective function. 

Chance-constrained programming is a class of stochastic programming, which finds a solution that satisfies the constraints with a probability of at least a given threshold value. The chance-constrained binary knapsack problem (CKP) has also been considered by many researchers. Kleinberg et al. \cite{Kleinberg00} presented polynomial-time algorithms for the CKP with Bernoulli distributed weights under some conditions. Goel and Indyk \cite{Goel99} developed polynomial-time approximation schemes (PTAS) for the problem with Bernoulli and exponentially distributed weights and a quasi-PTAS for the CKP with Bernoulli distributed weights. Goyal and Ravi \cite{Goyal10} proposed a PTAS by solving the reformulated two-dimensional BKP and rounding its solution for the CKP with independent normally distributed weights. Recently, De \cite{De18} proposed fully polynomial-time approximation schemes (FPTAS) for the CKP with Bernoulli distributed weights. A PTAS for the CKP with random weights whose variation and kurtosis are bounded was also designed by De \cite{De18}.

In robust optimization \cite{Bental09,Bertsimas11}, an uncertainty set containing all possible values of uncertain coefficients is defined. A robust optimal solution should satisfy the  constraints for any possible value of data in the uncertainty set. The robust BKP with a cardinality-constrained uncertainty set \cite{Bertsimas04,Lee12,Monaci13_2,Monaci13} has been investigated to obtain robust solutions efficiently. Klopfenstein and Nace \cite{Klopfenstein08} used solutions of the robust BKP with cardinality-constrained uncertainty set to produce feasible solutions to the CKP without an assumption regarding the distributions of weights. Moreover, Han et al. \cite{Han16} dealt with the robust BKP with inner approximation of the ellipsoidal uncertainty set to obtain qualified approximate solutions to the CKP with independent normally distributed weights. 

distributionally robust optimization \cite{Rahimian19} has also been proposed as a generalization of previous approaches. In distributionally robust optimization, uncertain coefficients are assumed to be random variables as in stochastic programming, but they can have any distribution belonging to a specified ambiguity set. Recently, several variants of the distributionally robust BKP, which are generalizations of the stochastic BKP, have been considered. Cheng et al. \cite{Cheng14} considered the distributionally robust quadratic BKP only with the first and second moment of the random parameters available, and semidefinite programming reformulation was applied to obtain the upper and lower bounds for the problem. Bansal et al. \cite{Bansal18} applied Benders' decomposition and the L-shaped method for the distributionally robust multiple binary knapsack problem and other variants. Xie \cite{Xie19} presented conditional value-at-risk constrained reformulations of a distributionally robust optimization problem with chance constraints under the Wasserstein ambiguity set, and their efficiency was tested for the distributionally robust multidimensional knapsack problem.  

In this paper, we assume that weight of each item is a random variable. Then, the deterministic capacity constraint is converted to the following chance constraint. 
\begin{equation} \label{eq01_01} 
\textnormal{Prob} \left\{ \sum_{j \in {N}} a_{j} x_{j} \leq b \right\} \geq \rho,
\end{equation}
where $\rho \geq 0.5$ is the confidence level of the chance constraint. The CKP now can be formulated using constraint (\ref{eq01_01}) as
\begin{align*} 
\textbf{(CKP)} \ \textnormal{maximize} & \ \ \sum_{j \in {N}} p_{j} x_{j} \\
\textnormal{subject to} & \ \ \textnormal{Prob} \left\{ \sum_{j \in {N}} a_{j} x_{j} \leq b \right\} \geq \rho, \\
& \ \ x \in \{0,1\}^{n}. 
\end{align*}
When the weights are normally distributed with the means and covariances known, the CKP can be represented as a deterministic second-order cone (SOC)-constrained BKP \cite{Calafiore06}. It is also known that the problem is equivalent to a robust BKP with an ellipsoidal uncertainty set \cite{Bental99}. Moreover, if the weights are independent, the left-hand side of the SOC constraint can be represented as the sum of a linear function and the root of another linear function, which is a submodular function \cite{Atamturk09}.

Several researchers have attempted to solve the SOC-constrained BKP with the capacity constraint of which the left-hand side is a submodular nonlinear function. As previously mentioned, Goyal and Ravi \cite{Goyal10} developed a PTAS for the problem, and they also provided an example in which the integrality gap of the conic relaxation is at least $\mathrm{\Omega}(\sqrt{n})$. Han et al. \cite{Han16} proposed an algorithm for the robust BKP with an ellipsoidal uncertainty set using an inner polyhedral approximation of the ellipsoidal uncertainty set. Their algorithm provides an approximate solution to the CKP, which has the theoretical probability guarantee of satisfying constraint (\ref{eq01_01}) with probability $\rho-\delta$ for any fixed positive $\delta$. Based on the approach of Nikolova \cite{Nikolova10}, Shabtai et al. \cite{Shabtai18} presented a relaxed FPTAS with the running time bounded by a polynomial function of $n$ and $\epsilon^{-1}$. Joung and Lee \cite{Joung20} used submodularity to develop a heuristic algorithm, which has a procedure of solving the robust BKP with cardinality-constrained uncertainty set \cite{Bertsimas03,Bertsimas04} repeatedly while increasing the degree of robustness. Yang and Charkraborty \cite{Yang20} developed an exact algorithm, which solves the ordinary BKPs iteratively. Their ordinary problems have a capacity constraint in which the left-hand side is the convex combination of the total mean and the total variation. Their algorithm worked well for some instances with a maximum of 100 items that they generated, but they did not provide any theoretical guarantee for the efficiency of their algorithm. 

In distributionally robust optimization, the chance constraint should be  satisfied when the random vector $a$ follows any probability distribution in the ambiguity set $\mathcal{D}_{a}$. Then, the chance constraint also can be converted to the following distributionally robust chance constraint: 
\begin{equation} \label{eq01_02} 
\mathrm{inf}_{a \in \mathcal{D}_{a}} \ \textnormal{Prob} \left\{ \sum_{j \in {N}} a_{j} x_{j} \leq b \right\} \geq \rho.
\end{equation}
The distributionally robust chance-constrained BKP (DCKP) can be obtained by replacing constraint (\ref{eq01_01}) with constraint (\ref{eq01_02}) as 
\begin{align*} 
\textbf{(DCKP)} \ \textnormal{maximize} & \ \ \sum_{j \in {N}} p_{j} x_{j} \\
\textnormal{subject to} & \ \ \mathrm{inf}_{a \in \mathcal{D}_{a}} \ \textnormal{Prob} \left\{ \sum_{j \in {N}} a_{j} x_{j} \leq b \right\} \geq \rho, \\
& \ \ x \in \{0,1\}^{n}. 
\end{align*}
\noindent To the best of our knowledge, little research has focused on the DCKP and its solution intensively, although some of its variants have been sometimes considered.

We consider theoretical investigation and algorithms for chance-constrained BKPs with uncertainty in item weights. Depending on the assumption regarding the distribution of random weights, the capacity constraint can be represented as the chance constraint (\ref{eq01_01}) or the distributionally robust chance constraint (\ref{eq01_02}), and then \textbf{(CKP)} or \textbf{(DCKP)} can be obtained, respectively. 

This paper is organized as follows. Section \ref{sect02} interprets the relationships between chance constraints with various assumptions on the distribution of item weights and the SOC constraint. In section \ref{sect03}, we associate the SOC constraint with the equivalent robust constraint with an ellipsoidal uncertainty set. We then present inner and outer polyhedral approximations of the ellipsoidal uncertainty set. Section \ref{sect04} presents an algorithm that solves ordinary BKP repeatedly to obtain the upper and lower bounds for the SOC-constrained BKP. Section \ref{sect05} explains some probability guarantees of the solutions obtained by the algorithm. Section \ref{sect06} describes a pseudo-polynomial time exact algorithm when the coefficients are integer-valued and an iterative algorithm to efficiently obtain an exact optimal solution for the SOC-constrained BKP. Section \ref{sect07} gives computational results of our approximate and exact algorithms compared to CPLEX and a previous study. Section \ref{sect08} summarizes the main results of our research and proposes suggestions for future research.   

\section{chance constraints and second-order cone constraint} \label{sect02}

Let $a$ be an $n$-dimensional random vector with a mean vector $\hat{a}$ and a positive semidefinite covariance matrix $\mathrm{\Sigma}$. There is a well-known reformulation for stochastic programming problems that include chance constraints (\ref{eq01_01}). When each $a_{j}$ has normal distribution, constraint (\ref{eq01_01}) can be reformulated as \cite{Calafiore06}
\begin{equation} \label{eq02_01}
\hat{a}' x + \mathrm{\Phi}^{-1}(\rho) \ \| \mathrm{\Sigma}^{1/2} x \| \leq b,
\end{equation}
where $\mathrm{\Phi}$ is the cumulative distribution function of the standard normal distribution. This reformulation is called the second-order cone constraint.  

We can also derive similar reformulations for distributionally robust optimization problems that include distributionally robust chance constraints (\ref{eq01_02}). Let $\mathcal{D}_{a}^{1}$ be a set of arbitrary random vectors with mean vector $\hat{a}$ and covariance matrix $\mathrm{\Sigma}$, that is, $\mathcal{D}_{a}^{1} := \{a \ | \ \textrm{E}[a]=\hat{a}, \textrm{E}[(a-\hat{a})(a-\hat{a})'] =  \mathrm{\Sigma}  \}$. Then, the capacity constraint becomes distributionally robust chance constraint (\ref{eq01_02}) with $\mathcal{D}_{a} = \mathcal{D}_{a}^{1}$, which can be reformulated as \cite{Calafiore06}
\begin{equation} \label{eq02_02}
\hat{a}' x + \sqrt{\frac{\rho}{1-\rho}} \ \| \mathrm{\Sigma}^{1/2} x \| \leq b.
\end{equation}
The one-tailed version of Chebyshev's inequality \cite{Pinter89}, also called Cantelli's inequality, can be used to reformulate constraint (\ref{eq01_02}) with $\mathcal{D}_{a} = \mathcal{D}_{a}^{1}$ to constraint (\ref{eq02_02}). 

Moreover, Delage and Ye \cite{Delage10} proposed a new ambiguity set to avoid the difficulty that arises when the expected values and covariance matrix obtained from data samples are not independent from estimation errors. When a vector of sample average value $\hat{a}$ and a sample covariance matrix $\hat{\mathrm{\Sigma}}$ are given, the new uncertainty set $\mathcal{D}_{a}^{2}$ can be expressed as
\begin{equation} \nonumber
\mathcal{D}_{a}^{2} := \{ a \ | \ (\textrm{E}[a]-\hat{a})' \hat{\mathrm{\Sigma}}^{-1} (\textrm{E}[a]-\hat{a}) \leq \gamma_{1}, \textrm{E}[(a-\hat{a})(a-\hat{a})'] \succeq \gamma_{2} \hat{\mathrm{\Sigma}} \},
\end{equation}
where $\gamma_{1} \geq 0$ and $\gamma_{2} \geq 1$. Here, $A \succeq B$ means that $A - B$ is positive semidefinite for square matrices $A$ and $B$. Zhang et al. \cite{Zhang18} proved that constraint (\ref{eq01_02}) with $\mathcal{D}_{a} = \mathcal{D}_{a}^{2}$ can be reformulated as an SOC constraint under certain conditions. If $\gamma_{1}/\gamma_{2} \leq 1-\rho$, constraint (\ref{eq01_02}) with $\mathcal{D} = \mathcal{D}_{a}^{2}$ is equivalent to 
\begin{equation} \nonumber
\hat{a}' x + (\sqrt{\gamma_{1}} + \sqrt{(\gamma_{2}-\gamma_{1})\frac{\rho}{1-\rho}}) \ \| \mathrm{\Sigma}^{1/2} x \| \leq b,
\end{equation}
and if $\gamma_{1}/\gamma_{2} > 1-\rho$, the constraint is equivalent to
\begin{equation} \label{eq02_04}
\hat{a}' x + \sqrt{\frac{\gamma_{2}}{1-\rho}} \ \| \mathrm{\Sigma}^{1/2} x \| \leq b.
\end{equation}
 
On the other hand, there can be some information about the lower and upper bounds on the values of the coefficients although the exact values are not known. Then, we can reflect it in an ambiguity set. Let $a_{j}$ be a random variable with a mean $\hat{a}_{j}$ and its support interval $[\underline{a}_{j},\bar{a}_{j}]$ for $j \in {N}$. Calafiore and Ghaoui \cite{Calafiore06} defined ambiguity set $\mathcal{D}_{a}^{3} := \{ a \ | \ \textrm{E}[a]=\hat{a}, \ \underline{a} \leq a \leq \bar{a}\}$, and verified that constraint (\ref{eq01_02}) with $\mathcal{D}_{a} = \mathcal{D}_{a}^{3}$ holds if the following constraint is alternatively available:
\begin{equation} \label{eq02_05}
\hat{a}' x + \sqrt{-\frac{1}{2}\mathrm{ln}(1-\rho)} \ \sqrt{\sum_{j \in {N}} (\bar{a}_{j}^{2} - \underline{a}_{j}^{2})^{2} x_{j}^{2}} \leq b.
\end{equation}

Constraints (\ref{eq02_01}) to (\ref{eq02_05}) are SOC constraints. A continuous optimization problem with a linear objective function and linear and SOC constraints is an SOC-constrained programming problem. It is an important class of convex optimization problems, which can be solved in polynomial time. In contrast, if an optimization problem with SOC constraints has discrete variables, the problem becomes a mixed integer nonlinear programming problem, for which there has been limited success in finding solutions \cite{Benson13}.

We can observe that several variants of stochastic programming and distributionally robust optimization problems can be reformulated as equivalent or safe-approximate deterministic optimization problems with SOC constraints as constraints (\ref{eq02_01}) to (\ref{eq02_05}). We also assume that each weight $a_{j}$ of item $j$ is independent of the weights of other items. Under this assumption, the covariance matrix $\mathrm{\Sigma}$ becomes a diagonal matrix, that is, $\mathrm{\Sigma} := \textrm{diag}(\sigma_{1}^{2},\cdots,\sigma_{n}^{2})$, where $\sigma_{j}$ is the standard deviation of $a_{j}$ for item $j \in {N}$. Then, $\| \mathrm{\Sigma}^{1/2} x \|$ becomes $\sqrt{ \sum_{j \in {N}} {\sigma}_{j}^{2} x_{j}^{2} }$, and we can express constraints (\ref{eq02_01}) to (\ref{eq02_04}) as the following formulation using some $\mathrm{\Omega}$:
\begin{equation} \label{eq02_06}
\sum_{j \in {N}} \hat{a}_{j} x_{j} + \mathrm{\Omega} \sqrt{ \sum_{j \in {N}} {\sigma}_{j}^{2} x_{j}^{2} } \leq b.
\end{equation}
In addition, the problem we consider becomes the following SOC constrained BKP which includes constraint (\ref{eq02_06}):
\begin{align*} 
\textbf{(SOCKP)} \ \textnormal{maximize} & \ \ \sum_{j \in {N}} p_{j} x_{j} \\
\textnormal{subject to} & \ \ \sum_{j \in {N}} \hat{a}_{j} x_{j} + \mathrm{\Omega} \sqrt{ \sum_{j \in {N}} {\sigma}_{j}^{2} x_{j}^{2} } \leq b, \\
& \ \ x \in \{0,1\}^{n}. 
\end{align*}

\section{Approximations of the second-order cone constraint via robust optimization} \label{sect03}

For the approximation of the SOC constraint (\ref{eq02_06}), we first consider the following equivalent robust constraint with an ellipsoidal uncertainty set \cite{Bental99}: 
\begin{equation} \label{eq03_00} 
\sum_{j \in N} a_{j} x_{j} \leq b, \ \ \forall a \in \mathcal{U},
\end{equation}
where $\mathcal{U} := \{ \hat{a} \ + \  \mathrm{\Sigma}^{1/2} \xi \ | \ \| \xi \| \leq \mathrm{\Omega} \}$ is the ellipsoidal uncertainty set. The ellipsoidal uncertainty set $\mathcal{U}$ is equal to $\mathcal{U}_{\mathrm{\Omega}}$, where 
\begin{align} \nonumber
\mathcal{U}_{\gamma} := \{ \hat{a} + \mathrm{\Sigma}^{1/2} \xi \ | \ \sum_{j \in {N}} \xi_{j}^{2} \leq \gamma^{2} \}, \ \gamma > 0.
\end{align}
To approximate the uncertainty set $\mathcal{U}$, we use piecewise-linear approximations of the $n$ quadratic functions $q_{j}(\xi_{j}):=\xi_{j}^{2}$, $j \in {N}$ with the identical domains $\xi_{j} \in [0,\mathrm{\Omega}]$, $j \in {N}$. 

For the upper approximation of the quadratic function, the domain $[0,\mathrm{\Omega}]$ of $q_{j}(\xi_{j})$ is divided into $m$ equally sized subintervals $[\pi_{j}^{0},\pi_{j}^{1}]$, $\cdots$, $[\pi_{j}^{m-1},\pi_{j}^{m}]$ along the horizontal axis  for each $j \in {N}$, where $\pi_{j}^{k} := (k / m) \cdot \mathrm{\Omega}$, $k=0,\cdots,m$, $j \in {N}$. Then, a piecewise linear function $u^{m}_{j}: [0,\mathrm{\Omega}] \to [0,\mathrm{\Omega}^2]$ whose $k$-th linear segment is over the subinterval $[\pi_{j}^{k-1},\pi_{j}^{k}]$ can be defined as
\begin{eqnarray}
u^{m}_{j}(\xi_{j}) & := & (\pi^{k-1}_{j})^2 + \frac{(\pi^{k}_{j})^2-(\pi^{k-1}_{j})^2}{\mathrm\pi_{j}^{k}-\pi_{j}^{k-1}} \ (\xi_{j} - \pi_{j}^{k-1}) \nonumber \\ 
& = & (\pi_{j}^{k-1}+\pi_{j}^{k}) \cdot \xi_{j} - \pi_{j}^{k-1}\pi_{j}^{k} \nonumber \\
& = & \frac{(2k-1) }{m} \mathrm{\Omega} \cdot \xi_{j} - \frac{(k-1)k }{m^2} \mathrm{\Omega}^{2}, \ \textrm{if} \ \pi_{j}^{k-1} \leq \xi_{j} \leq  \pi_{j}^{k}, \ \forall k \in {M}, \nonumber
\end{eqnarray}
where ${M} := \{1,\cdots,m\}$ is an index set of the linear segments. The piecewise linear function $u^{m}_{j}$ approximates $q_{j}$ from above, that is, $u^{m}_{j}(\xi_{j}) \geq q_{j}(\xi_{j})$, $\xi_{j} \in [0, \mathrm{\Omega}]$ for $j \in {N}$. Figure \ref{fig03_01} shows examples of $u^{m}_{j}$ for $m=2$ and $m=5$ when $\mathrm{\Omega} = 10$. 

As the number of linear segments $m$ increases in the approximation, the approximation becomes more accurate. Moreover, we can identify the maximum error of the approximation depending on $m$ as the following lemma shows. 

\begin{lemma} \label{lemma03_01}  
The maximum difference between $u^{m}_{j}$ and the quadratic function $q_{j}$ is $\mathrm{\Omega}^{2}/4 m^{2}$, that is, 
\begin{equation} \nonumber
u^{m}_{j}(\xi_{j}) \leq q_{j}(\xi_{j}) + \frac{\mathrm{\Omega}^{2}}{4 m^{2}}, \ \xi_{j} \in [0,\mathrm{\Omega}], \ \forall j \in {N}.
\end{equation}
Moreover, the bound is tight at the center of each of the subintervals $[\pi_{j}^{k-1},\pi_{j}^{k}]$, $k \in {M}$. 
\end{lemma}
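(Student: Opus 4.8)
The plan is to reduce the global statement to a piecewise analysis and then to maximize an explicit concave quadratic on each piece. First I would fix an item $j \in {N}$ and a segment index $k \in {M}$ and study the error function $d_{k}(\xi_{j}) := u^{m}_{j}(\xi_{j}) - q_{j}(\xi_{j})$ on the single subinterval $[\pi_{j}^{k-1}, \pi_{j}^{k}]$, deferring the recombination over all segments to the end.

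The central observation I would use is that the $k$-th segment of $u^{m}_{j}$ is exactly the secant line of $q_{j}$ joining the points $(\pi_{j}^{k-1}, (\pi_{j}^{k-1})^{2})$ and $(\pi_{j}^{k}, (\pi_{j}^{k})^{2})$; this is visible from the first line of the displayed definition of $u^{m}_{j}$. Consequently $d_{k}$ is a quadratic with leading coefficient $-1$ vanishing at both endpoints, so it admits the factorization $d_{k}(\xi_{j}) = -(\xi_{j}-\pi_{j}^{k-1})(\xi_{j}-\pi_{j}^{k})$. Since this is a downward parabola, its maximum over the subinterval is attained at the vertex, namely the midpoint $(\pi_{j}^{k-1}+\pi_{j}^{k})/2$.

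Next I would evaluate $d_{k}$ at that midpoint to obtain $(\pi_{j}^{k}-\pi_{j}^{k-1})^{2}/4$, and then substitute the uniform segment width $\pi_{j}^{k}-\pi_{j}^{k-1} = \mathrm{\Omega}/m$, which follows from the equal subdivision of $[0,\mathrm{\Omega}]$. This gives $\mathrm{\Omega}^{2}/4m^{2}$ as the maximum error on each subinterval, a value independent of both $k$ and $j$. Because the same bound holds and is attained on every piece, taking the maximum over $k \in {M}$ yields the stated inequality $u^{m}_{j}(\xi_{j}) \leq q_{j}(\xi_{j}) + \mathrm{\Omega}^{2}/4m^{2}$ on all of $[0,\mathrm{\Omega}]$, with equality precisely at the subinterval centers, which establishes the asserted tightness.

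I do not expect a genuine obstacle in this argument. The only point requiring a moment of care is verifying the factored form of $d_{k}$ — equivalently, confirming that the segment expression for $u^{m}_{j}$ coincides with the secant line rather than merely dominating the parabola — after which everything reduces to the elementary fact that a downward parabola is maximized at its vertex.
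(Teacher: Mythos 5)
Your proof is correct and follows essentially the same route as the paper's: both analyze the error $u^{m}_{j}-q_{j}$ piecewise as a concave quadratic on each subinterval and locate its maximum $\mathrm{\Omega}^{2}/4m^{2}$ at the midpoint. The only difference is cosmetic --- you maximize via the factorization $-(\xi_{j}-\pi_{j}^{k-1})(\xi_{j}-\pi_{j}^{k})$ coming from the secant-line observation, whereas the paper differentiates the explicit quadratic expression; both reduce to the same vertex computation.
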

\begin{proof}
Let $g^{m}_{j}$ be the difference between $u^{m}_{j}$ and $q_{j}$, that is, $g^{m}_{j}(\xi_{j}) := u^{m}_{j}(\xi_{j}) - q_{j}(\xi_{j})$, $\xi_{j} \in [0,\mathrm{\Omega}]$. Then, $g^{m}_{j}$ is a nonnegative and continuous piecewise quadratic function, and it can be represented by 
\begin{equation*} 
g^{m}_{j}(\xi_{j}) = -\xi_{j}^{2} + \frac{2k-1}{m} \mathrm{\Omega} \cdot \xi_{j} - \frac{(k-1)k}{m^{2}} \mathrm{\Omega}^{2}, \ \textrm{if} \  \pi_{j}^{k-1} \leq \xi \leq \pi_{j}^{k}, \ \forall k \in {M}.
\end{equation*}
Then, $g^{m}_{j}$ can be differentiated at the domain of the function except the endpoints of the linear segments, and its derivative over each of the open intervals $(\pi_{j}^{k-1},\pi_{j}^{k})$, $k \in {M}$ is a linear function represented by
\begin{equation*} 
\frac{d g^{m}_{j}(\xi_{j})}{d \xi_{j}} = -2 \xi_{j} + \frac{2k-1}{m} \mathrm{\Omega}, \ \textrm{if} \ \pi_{j}^{k-1} < \xi_{j} < \pi_{j}^{k}, \ \forall k \in {M}.
\end{equation*}
The local maximum points of $g^{m}_{j}$, $k \in {M}$ are $\xi_{j,k}^{*} := (2k-1) \mathrm{\Omega}/2m$, $k \in {M}$, obtained from ${d g_{m}(\xi_{j,k}^{*})}/{d \xi_{j}}=0$. These points, which are the centers of the subintervals, have the same function value of $g_{j}^{m}(\xi_{j,k}^{*}) = \mathrm{\Omega}^{2} / {4m^{2}}$ for all $k \in {M}$. \qed
\end{proof}

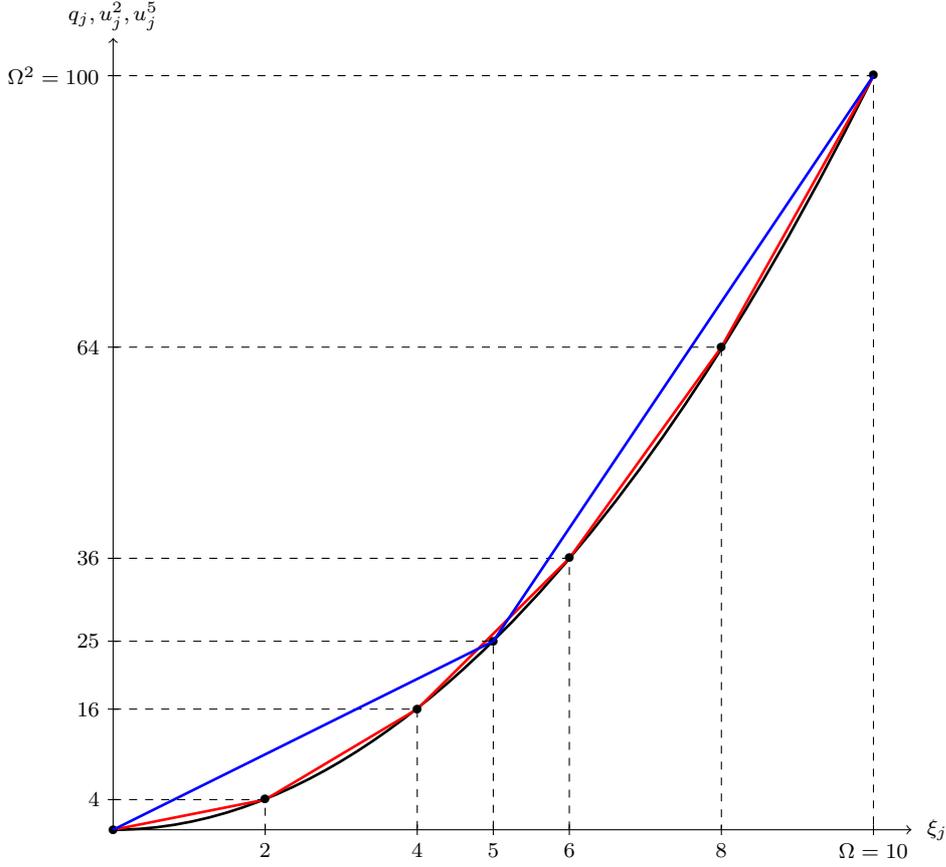
\begin{figure}
\begin{center}
\begin{tikzpicture}[scale=10]
\draw[->] (-0,0) -- (1.05,0) node[right] {$\ \xi_j$};
\draw[->] (0,-0) -- (0,1.05) node[above] {$q_{j}, u^{2}_{j}, u^{5}_{j}$};
\foreach \x/\xtext in {0.2/{2}, 0.4/4, 0.5/5, 0.6/6, 0.8/8, 1.0/\mathrm{\Omega}=10}
\draw[shift={(\x,0)}] (0pt,0.2pt) -- (0pt,-0.2pt) node[below] {$\xtext$};
\foreach \y/\ytext in {0.04/4, 0.16/16, 0.25/25, 0.36/36, 0.64/64, 1.0/\mathrm{\Omega}^{2}=100}
\draw[shift={(0,\y)}] (0.2pt,0pt) -- (-0.2pt,0pt) node[left] {$\ytext$};
\draw[line width=1pt] (0,0) parabola bend (0,0) (1,1);
\draw[line width=1pt, red] (0,0) -- (0.2,0.04) -- (0.4, 0.16) -- (0.6, 0.36) -- (0.8, 0.64) -- (1,1);
\draw (0,0) node[] {\textbullet};
\draw (0.2,0.04) node[] {\textbullet};
\draw (0.4, 0.16) node[] {\textbullet};
\draw (0.6, 0.36) node[] {\textbullet};
\draw (0.8, 0.64) node[] {\textbullet};
\draw (1,1) node[] {\textbullet};
\draw[dashed] (0,0.04) -- (0.2,0.04) -- (0.2,0);
\draw[dashed] (0, 0.16) -- (0.4, 0.16) -- (0.4, 0);
\draw[dashed] (0, 0.25) -- (0.5, 0.25) -- (0.5, 0);
\draw[dashed] (0, 0.36) -- (0.6, 0.36) -- (0.6, 0);
\draw[dashed] (0, 0.64) -- (0.8, 0.64) -- (0.8, 0);
\draw[dashed] (0,1) -- (1,1) -- (1,0);
\draw (0.5, 0.25) node[] {\textbullet};
\draw[line width=1pt, blue] (0,0) -- (0.5, 0.25)  -- (1,1);
\end{tikzpicture} 
\caption{The quadratic function $q_{j}$ (black) and its upper approximations $u^{2}_{j}$ (blue) and $u^{5}_{j}$ (red), respectively when $\mathrm{\Omega} = 10$.}
\label{fig03_01}
\end{center}
\end{figure}

We now obtain an inner approximation of the uncertainty set $\mathcal{U}$ by using the upper approximation of the quadratic functions $q_{j}$, $j \in {N}$. After replacing $q_{j}$ by $u_{j}^{m}$, $j \in {N}$, an uncertainty set $\mathcal{U}_{\textrm{in}}^{m}$ can be obtained as
\begin{equation} \nonumber
\mathcal{U}_{\textrm{in}}^{m} = \{ \hat{a} + \mathrm{\Sigma}^{1/2} \xi \ | \ \sum_{j \in {N}} u^{m}_{j}(\xi_{j}) \leq \mathrm{\Omega}^{2} \}.
\end{equation}
Because $u^{m}_{j}$ is an approximation of $q_{j}$ from above, we have  $\mathcal{U}_{\textrm{in}}^{m} \subseteq \mathcal{U}$. The inner approximation $\mathcal{U}_{\textrm{in}}^{m}$ of $\mathcal{U}$ improves quickly as the value of $m$ becomes larger. Moreover, $\mathcal{U}_{\textrm{in}}^{m}$ is a polyhedron because $u^{m}_{j}$ is a convex and piecewise linear function.  

The robust counterpart using $\mathcal{U}_{\textrm{in}}^{m}$ instead of $\mathcal{U}$ can be expressed as
\begin{equation} \label{eq03_05} 
\sum_{j \in {N}} a_{j} x_{j} \leq b, \ \ \forall a \in \mathcal{U}_{\textrm{in}}^{m}.
\end{equation}
Because $\mathcal{U}_{\textrm{in}}^{m} \subseteq \mathcal{U}$, an optimal solution to the robust BKP with uncertainty set $\mathcal{U}_{\textrm{in}}^{m}$ can be infeasible to the problem with the original constraint (\ref{eq03_00}) and \textbf{(SOCKP)}, even though the optimal value of the problem with constraint (\ref{eq03_05}) provides an upper bound on the optimal value of \textbf{(SOCKP)}. However, we can see that $\mathcal{U}_{\textrm{in}}^{m}$ becomes close to $\mathcal{U}$ as the value of $m$ increases. Suppose that $\mathcal{U}_{{\gamma}_{m}} = \{ \hat{a} \ + \  \mathrm{\Sigma}^{1/2} \xi \ | \ \| \xi \| \leq {\gamma}_{m} \} $ is an ellipsoid contained in $\mathcal{U}_{\textrm{in}}^{m}$. Then, we have the following proposition.  

\begin{proposition} \label{prop03_01}
Let $\bar{\gamma}_{m}$ be the maximum value of $\gamma_{m}$ such that $\mathcal{U}_{\gamma_{m}} \subseteq \mathcal{U}_{\textrm{in}}^{m}$. Then, 
\begin{equation*} 
\bar{\gamma}_{m} \geq \mathrm{\Omega} \sqrt{1 - \frac{n}{4m^2}},
\end{equation*}
when $m \geq \sqrt{n} / 2$. Moreover,
\begin{equation*}  
\lim_{m \to \infty} \bar{\gamma}_{m} = \mathrm{\Omega}.
\end{equation*}
\end{proposition}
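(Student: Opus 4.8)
The plan is to reinterpret $\bar{\gamma}_{m}$ geometrically as a distance and then transfer the pointwise error bound of Lemma~\ref{lemma03_01} into a bound on this distance. Since $\mathrm{\Sigma}^{1/2}$ enters identically in the definitions of $\mathcal{U}_{\gamma_{m}}$ and $\mathcal{U}_{\textrm{in}}^{m}$, the containment $\mathcal{U}_{\gamma_{m}} \subseteq \mathcal{U}_{\textrm{in}}^{m}$ is equivalent to the containment of the corresponding sets in $\xi$-space, namely $\{ \xi \ | \ \| \xi \| \leq \gamma_{m} \} \subseteq P$, where $P := \{ \xi \ | \ \sum_{j \in {N}} u^{m}_{j}(\xi_{j}) \leq \mathrm{\Omega}^{2} \}$. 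Because each $u^{m}_{j}$ has increasing segment slopes it is convex and piecewise linear, so $P$ is a convex polyhedron containing the origin in its interior, and the largest radius $\bar{\gamma}_{m}$ for which the centered ball fits inside $P$ equals the Euclidean distance from the origin to the boundary $\partial P$. This reduces the claim to the optimization problem
\begin{equation*}
\bar{\gamma}_{m}^{2} = \min \Big\{ \sum_{j \in {N}} \xi_{j}^{2} \ \Big| \ \sum_{j \in {N}} u^{m}_{j}(\xi_{j}) = \mathrm{\Omega}^{2} \Big\},
\end{equation*}
where, by the evenness of $q_{j}$, it suffices to consider $\xi \geq 0$, and since $u^{m}_{j}(\xi_{j}) \geq \xi_{j}^{2}$ every boundary point automatically satisfies $\| \xi \| \leq \mathrm{\Omega}$ and hence stays in the domain $[0,\mathrm{\Omega}]^{n}$.

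The lower bound then follows immediately from Lemma~\ref{lemma03_01}. For any $\xi$ on $\partial P$, applying $u^{m}_{j}(\xi_{j}) \leq \xi_{j}^{2} + \mathrm{\Omega}^{2}/4m^{2}$ termwise gives $\mathrm{\Omega}^{2} = \sum_{j \in {N}} u^{m}_{j}(\xi_{j}) \leq \sum_{j \in {N}} \xi_{j}^{2} + n \mathrm{\Omega}^{2}/4m^{2}$, so that $\| \xi \|^{2} \geq \mathrm{\Omega}^{2}(1 - n/4m^{2})$. Minimizing over $\partial P$ yields $\bar{\gamma}_{m}^{2} \geq \mathrm{\Omega}^{2}(1 - n/4m^{2})$, and the stated inequality results after taking square roots; the hypothesis $m \geq \sqrt{n}/2$ is precisely what keeps the radicand nonnegative.

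For the limit, I would sandwich $\bar{\gamma}_{m}$ between this lower bound and $\mathrm{\Omega}$. The upper bound $\bar{\gamma}_{m} \leq \mathrm{\Omega}$ follows from $\mathcal{U}_{\textrm{in}}^{m} \subseteq \mathcal{U} = \mathcal{U}_{\mathrm{\Omega}}$; equivalently, the point $(\mathrm{\Omega},0,\cdots,0)$ lies on $\partial P$ because each $u^{m}_{j}$ interpolates $q_{j}$ at the endpoints of $[0,\mathrm{\Omega}]$, so the distance from the origin to the boundary cannot exceed $\mathrm{\Omega}$. Since $\mathrm{\Omega} \sqrt{1 - n/4m^{2}} \leq \bar{\gamma}_{m} \leq \mathrm{\Omega}$ and the left-hand side converges to $\mathrm{\Omega}$ as $m \to \infty$, the squeeze theorem gives $\lim_{m \to \infty} \bar{\gamma}_{m} = \mathrm{\Omega}$.

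I expect the only delicate point to be the geometric reduction in the first paragraph: justifying that the largest centered inscribed ball has radius equal to the distance to $\partial P$ (which rests on convexity of $P$ and on the origin being interior), and checking that restricting attention to $\xi \geq 0$ inside $[0,\mathrm{\Omega}]^{n}$ is without loss. Once the problem is cast as the constrained minimization above, the error estimate of Lemma~\ref{lemma03_01} finishes the argument essentially mechanically.
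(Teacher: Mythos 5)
Your proof is correct, and its analytical core is the same as the paper's: the termwise error bound $u_{j}^{m}(\xi_{j}) \leq \xi_{j}^{2} + \mathrm{\Omega}^{2}/4m^{2}$ from Lemma \ref{lemma03_01}, summed over $j \in N$. The difference is packaging. The paper argues directly in the "sufficiency" direction: if $\|\xi\| \leq \mathrm{\Omega}\sqrt{1-n/4m^{2}}$ then $\sum_{j} u_{j}^{m}(\xi_{j}) \leq \|\xi\|^{2} + n\mathrm{\Omega}^{2}/4m^{2} \leq \mathrm{\Omega}^{2}$, so the ball of that radius sits inside $\mathcal{U}_{\textrm{in}}^{m}$ and $\bar{\gamma}_{m} \geq \mathrm{\Omega}\sqrt{1-n/4m^{2}}$ follows with no appeal to boundaries or to an extremal characterization of $\bar{\gamma}_{m}$. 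Your contrapositive route (characterize $\bar{\gamma}_{m}$ as the distance from the center to $\partial P$, then bound $\|\xi\|$ from below on the boundary) proves the same inequality but carries overhead that the direct argument avoids, and which you correctly flag as the delicate part: the inscribed-ball identity (for which closedness of $P$ and interiority of the origin suffice; convexity is not actually needed), the identification of $\partial P$ with the level set $\sum_{j} u_{j}^{m}(\xi_{j}) = \mathrm{\Omega}^{2}$, and, for the claimed \emph{equivalence} of the containments in $a$-space and $\xi$-space, injectivity of $\mathrm{\Sigma}^{1/2}$ (i.e. all $\sigma_{j}>0$) — the paper's direction of argument only ever needs the trivial implication from $\xi$-space to $a$-space. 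Where your write-up genuinely improves on the paper is the limit statement: the paper merely remarks that $\sqrt{1-n/4m^{2}} \to 1$, which on its own gives only $\liminf_{m\to\infty}\bar{\gamma}_{m} \geq \mathrm{\Omega}$; your explicit upper bound $\bar{\gamma}_{m} \leq \mathrm{\Omega}$ (from $\mathcal{U}_{\textrm{in}}^{m} \subseteq \mathcal{U} = \mathcal{U}_{\mathrm{\Omega}}$) together with the squeeze is what actually completes the equality $\lim_{m\to\infty}\bar{\gamma}_{m} = \mathrm{\Omega}$.
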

\begin{proof}
From Lemma \ref{lemma03_01}, the following inequalities hold; 
\begin{equation} \label{eq03_06} 
u_{j}^{m}({\xi}_{j}) \leq {\xi}_{j}^{2} + \frac{\mathrm{\Omega}^{2}}{4 m^{2}}, \ \forall j \in {N}.  
\end{equation}
Moreover, the following inequality derived from adding inequalities (\ref{eq03_06}) for all $j \in {N}$ holds:
\begin{equation*} 
\sum_{j \in {N}} u_{j}^{m}({\xi}_{j}) \leq \|{\xi}\|^{2} + n \frac{\mathrm{\Omega}^{2}}{4 m^{2}} \leq \mathrm{\Omega}^{2}, 
\end{equation*}
when $\|{\xi}\| \leq \mathrm{\Omega} \sqrt{1 - {n} / {4m^{2}}}$, $m \geq \sqrt{n} / 2$. Therefore, $\mathcal{U}_{{\gamma}_{m}}$ with ${\gamma}_{m} = \mathrm{\Omega} \sqrt{1 - {n} / {4m^{2}}}$ has $\mathcal{U}_{{\gamma}_{m}} \subseteq \mathcal{U}_{\textrm{in}}^{m}$, when $m \geq \sqrt{n} / 2$. This implies that the value of $\bar{\gamma}_{m}$ is at least $\mathrm{\Omega} \sqrt{1 - {n}/{4m^2}}$. The second statement also holds because $\sqrt{1 - {n}/{4m^2}} \to 1$ as $m \to \infty$. \qed 
\end{proof}

The derivation of the inner polyhedral approximation of the ellipsoidal uncertainty set $\mathcal{U}$ has already been attempted by Han et al. \cite{Han16}. They divided the interval $[0,\mathrm{\Omega}^{2}]$ in the range of the function $q_{j}$ (vertical axis) into $m$ equal-sized subintervals to obtain a piecewise linear approximation of $q_{j}$ from above. However, we divide the interval $[0, \mathrm{\Omega}]$ in the domain of the function $q_{j}$ (horizontal axis) into $m$ equal-sized subintervals to obtain a piecewise linear approximation, which results in more accurate approximation of $q_{j}$. By Lemma \ref{lemma03_01}, the difference between our piecewise linear approximation $u_{j}^{m}$ and $q_{j}$ is at most $\mathrm{\Omega}/4m^{2}$, while the maximum difference between Han et al.'s approximation and $q_{j}$ is $\mathrm{\Omega}^{2}/4m$. Hence, our scheme results in more accurate approximation of the ellipsoidal uncertainty set $\mathcal{U}$. 

Next, we consider the lower piecewise linear approximation of the quadratic functions $q_{j}$, $j \in {N}$. By using our scheme, we can obtain a piecewise linear approximation of $q_{j}$ from below too, which Han et al. \cite{Han16} did not consider. Thus, we can obtain both inner and outer polyhedral approximation of $\mathcal{U}$, which can help us obtain good algorithms for \textbf{(SOCKP)}. We define new piecewise linear functions $l^{m}_{j}$, $j \in {N}$ by subtracting $\mathrm{\Omega}^{2}/4m^{2}$ from each $u^{m}_{j}$ function, that is, $l^{m}_{j} := u^{m}_{j} - \mathrm{\Omega}^{2}/4m^{2} $, $j \in {N}$. Figure \ref{fig03_02} illustrates examples of $l^{m}_{j}$ for $m=2$ and $m=5$ when $\mathrm{\Omega} = 10$.  From Lemma \ref{lemma03_01}, $l^{m}_{j}$ is a piecewise linear approximation of $q_{j}$ from below. Moreover, the following lemma shows the accuracy of the lower approximation. 

\begin{lemma} \label{lemma03_02}
The maximum difference between $l^{m}_{j}$ and the quadratic function $q_{j}$ is $\mathrm{\Omega}^{2}/4 m^{2}$, that is, 
\begin{equation} \nonumber
l^{m}_{j}(\xi_{j}) \geq q_{j}(\xi_{j}) - \frac{\mathrm{\Omega}^{2}}{4 m^{2}}, \ \xi_{j} \in [0,\mathrm{\Omega}], \ \forall j \in {N}.
\end{equation}
Moreover, the bound is tight at $\xi_{j} = \pi_{j}^{0}, \pi_{j}^{1},\cdots,\pi_{j}^{m}$. 
\end{lemma}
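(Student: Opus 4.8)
The plan is to obtain everything as an immediate corollary of Lemma~\ref{lemma03_01}, using the fact that $l^{m}_{j}$ is nothing but $u^{m}_{j}$ shifted downward by the constant $\mathrm{\Omega}^{2}/4m^{2}$.

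First I would substitute the defining relation $l^{m}_{j} = u^{m}_{j} - \mathrm{\Omega}^{2}/4m^{2}$ into the quantity to be bounded and let the constants cancel:
\[
l^{m}_{j}(\xi_{j}) - \left(q_{j}(\xi_{j}) - \frac{\mathrm{\Omega}^{2}}{4m^{2}}\right) = u^{m}_{j}(\xi_{j}) - \frac{\mathrm{\Omega}^{2}}{4m^{2}} - q_{j}(\xi_{j}) + \frac{\mathrm{\Omega}^{2}}{4m^{2}} = u^{m}_{j}(\xi_{j}) - q_{j}(\xi_{j}).
\]
The right-hand side is the gap between the upper approximation and the quadratic, which is nonnegative on $[0,\mathrm{\Omega}]$ because $u^{m}_{j}$ approximates $q_{j}$ from above (stated just before Lemma~\ref{lemma03_01}). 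This yields $l^{m}_{j}(\xi_{j}) \geq q_{j}(\xi_{j}) - \mathrm{\Omega}^{2}/4m^{2}$ directly. I would also remark that $l^{m}_{j}$ genuinely approximates $q_{j}$ from below: combining $u^{m}_{j}(\xi_{j}) \leq q_{j}(\xi_{j}) + \mathrm{\Omega}^{2}/4m^{2}$ from Lemma~\ref{lemma03_01} with the same shift gives $l^{m}_{j}(\xi_{j}) \leq q_{j}(\xi_{j})$.

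For the tightness claim, I would observe that the inequality becomes an equality exactly where $u^{m}_{j}(\xi_{j}) = q_{j}(\xi_{j})$. By construction, the $k$-th linear segment of $u^{m}_{j}$ is the chord of $q_{j}$ joining the endpoints $(\pi_{j}^{k-1},(\pi_{j}^{k-1})^{2})$ and $(\pi_{j}^{k},(\pi_{j}^{k})^{2})$, so $u^{m}_{j}$ interpolates $q_{j}$ at every breakpoint. Substituting $\xi_{j} = \pi_{j}^{k}$ gives $u^{m}_{j}(\pi_{j}^{k}) = (\pi_{j}^{k})^{2} = q_{j}(\pi_{j}^{k})$, so the gap vanishes there and the bound is attained at $\xi_{j} = \pi_{j}^{0}, \pi_{j}^{1}, \cdots, \pi_{j}^{m}$.

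Because the argument collapses to a one-line consequence of Lemma~\ref{lemma03_01} once the constant cancels, there is no substantial obstacle here. The only point deserving a moment's care is that the tightness nodes for the lower approximation are the breakpoints $\pi_{j}^{k}$, rather than the subinterval centers $\xi_{j,k}^{*}$ that were the extremal points for the upper approximation in Lemma~\ref{lemma03_01}; the downward shift by $\mathrm{\Omega}^{2}/4m^{2}$ precisely converts the points of maximal overshoot of $u^{m}_{j}$ into the points where $l^{m}_{j}$ touches its lower bound.
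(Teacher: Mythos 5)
Your proof is correct and follows essentially the same route as the paper's: both deduce the inequality by cancelling the constant shift $\mathrm{\Omega}^{2}/4m^{2}$ against the nonnegativity of $u^{m}_{j} - q_{j}$, and both obtain tightness from the fact that $u^{m}_{j}$ interpolates $q_{j}$ at the breakpoints $\pi_{j}^{0},\ldots,\pi_{j}^{m}$. Your version is slightly more explicit than the paper's (spelling out that each segment of $u^{m}_{j}$ is a chord of $q_{j}$, and noting the complementary bound $l^{m}_{j} \leq q_{j}$), but the substance is identical.
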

\begin{proof}
Because $u^{m}_{j}$ is a piecewise linear approximation of of $q_{j}$ from above, $u^{m}_{j}(\xi_{j}) - q_{j}(\xi_{j}) = l^{m}_{j}(\xi_{j}) + \mathrm{\Omega}^{2}/4m^{2} - q_{j}(\xi_{j}) \geq 0$ is satisfied for $\xi_{j} \in [0,\mathrm{\Omega}]$. $l^{m}_{j}(\xi_{j}) = u^{m}_{j}(\xi_{j}) - \mathrm{\Omega}^{2}/4m^{2}$ is equal to $q_{j}(\xi_{j}) - \mathrm{\Omega}^{2}/4m^{2}$ at $\xi_{j}=\pi_{j}^{0}, \pi_{j}^{1}, \cdots, \pi_{j}^{m}$.  \qed
\end{proof}

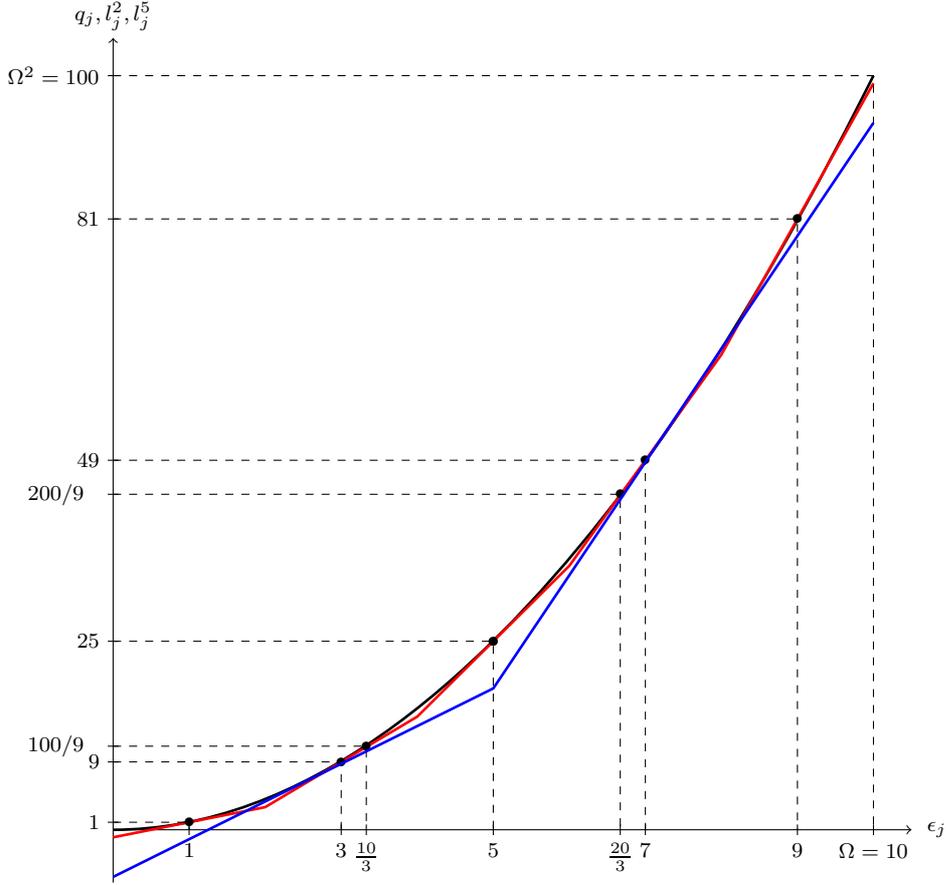
\begin{figure}
\begin{center}
\begin{tikzpicture}[scale=10]
\draw[->] (-0,0) -- (1.05,0) node[right] {$\ \epsilon_j$};
\draw[->] (0,-0.07) -- (0,1.05) node[above] {$q_{j}, l^{2}_{j}, l^{5}_{j}$};
\foreach \x/\xtext in {0.1/1, 0.3/3, 0.333/\frac{10}{3}, 0.5/5, 0.667/\frac{20}{3}, 0.7/7, 0.9/9, 1.0/\mathrm{\Omega}=10}
\draw[shift={(\x,0)}] (0pt,0.2pt) -- (0pt,-0.2pt) node[below] {$\xtext$};
\foreach \y/\ytext in {0.01/1, 0.09/9, 0.333*0.333/{100\mathrm{/}9 \ \ }, 0.25/25,  0.49/49, 0.667*0.667/{200\mathrm{/}9 \ \ }, 0.81/81, 1.0/\mathrm{\Omega}^{2}=100}
\draw[shift={(0,\y)}] (0.2pt,0pt) -- (-0.2pt,0pt) node[left] {$\ytext$};
\draw[line width=1pt] (0,0) parabola bend (0,0) (1,1);
\draw[line width=1pt, red] (0,0-0.01) -- (0.2,0.04-0.01) -- (0.4, 0.16-0.01) -- (0.6, 0.36-0.01) -- (0.8, 0.64-0.01) -- (1,1-0.01);
\draw (0.1, 0.01) node[] {\textbullet};
\draw (0.3, 0.09) node[] {\textbullet};
\draw (0.5, 0.25) node[] {\textbullet};
\draw (0.7, 0.49) node[] {\textbullet};
\draw (0.9, 0.81) node[] {\textbullet};
\draw (0.333,0.333*0.333) node[] {\textbullet};
\draw (0.667,0.667*0.667) node[] {\textbullet};
\draw[dashed] (0, 0.01) -- (0.1, 0.01) -- (0.1, 0);
\draw[dashed] (0, 0.09) -- (0.3, 0.09) -- (0.3, 0);
\draw[dashed] (0, 0.25) -- (0.5, 0.25) -- (0.5, 0);
\draw[dashed] (0, 0.49) -- (0.7, 0.49) -- (0.7, 0);
\draw[dashed] (0, 0.81) -- (0.9, 0.81) -- (0.9, 0);
\draw[dashed] (0, 0.333*0.333) -- (0.333, 0.333*0.333) -- (0.333, 0);
\draw[dashed] (0, 0.667*0.667) -- (0.667, 0.667*0.667) -- (0.667, 0);
\draw[dashed] (0,1) -- (1,1) -- (1,0);
\draw (0.5, 0.25) node[] {\textbullet};
\draw[line width=1pt, blue] (0,-0.0625) -- (0.5, 0.25-0.0625)  -- (1,1-0.0625);
\end{tikzpicture} 
\caption{The quadratic function $q_{j}$ (black) and its lower approximations $l^{2}_{j}$ (blue) and $l^{5}_{j}$ (red), respectively when $\mathrm{\Omega} = 10$.}
\label{fig03_02}
\end{center}
\end{figure}

As the derivation of the inner approximation of $\mathcal{U}$, we can obtain an outer approximation of $\mathcal{U}$ using the lower approximation of the quadratic functions $q_{j}$, $j \in {N}$. After the quadratic functions are replaced by the $n$ piecewise linear functions $l^{m}_{j}$, $j \in {N}$, the outer approximate uncertainty set $\mathcal{U}_{\textrm{out}}^{m}$ can be expressed as
\begin{equation} \nonumber
\mathcal{U}_{\textrm{out}}^{m} = \{ \hat{a} + \mathrm{\Sigma}^{1/2} \xi \ | \ \sum_{j \in {N}} l^{m}_{j}(\xi_{j}) \leq \mathrm{\Omega}^{2} \}.
\end{equation}
Because $l^{m}_{j}$ is an approximation of $q_{j}$ from below, we have $\mathcal{U}_{\textrm{out}}^{m} \supseteq \mathcal{U}$. Furthermore, $\mathcal{U}_{\textrm{out}}^{m}$ is also a polyhedron, and the outer approximation $\mathcal{U}_{\textrm{out}}^{m}$ of $\mathcal{U}$ improves quickly as the value of $m$ becomes larger.

The robust counterpart using $\mathcal{U}_{\textrm{out}}^{m}$ instead of $\mathcal{U}$ can be expressed as
\begin{equation} \label{eq03_09} 
\sum_{j \in {N}} a_{j} x_{j} \leq b, \ \ \forall a \in \mathcal{U}_{\textrm{out}}^{m}.
\end{equation}
Because $\mathcal{U}_{\textrm{out}}^{m} \supseteq \mathcal{U}$, an optimal solution to the robust BKP with uncertainty set $\mathcal{U}_{\textrm{out}}^{m}$ is a feasible solution to the problem with the original constraint (\ref{eq03_00}), and \textbf{(SOCKP)} as well. Its optimal value provides a lower bound on the optimal value of \textbf{(SOCKP)}. Moreover, we can see that $\mathcal{U}_{\textrm{out}}^{m}$ becomes close to $\mathcal{U}$ as $m$ increases. Suppose that $\mathcal{U}_{{\gamma}_{m}}$ is an ellipsoid containing $\mathcal{U}_{\textrm{out}}^{m}$. Then, we have the following proposition. 

\begin{proposition} \label{prop03_02}
Let $\underline{\gamma}_{m}$ be the minimum value of $\gamma_{m}$ such that $\mathcal{U}_{\gamma_{m}}  \supseteq \mathcal{U}_{\textrm{out}}^{m}$. Then, 
\begin{equation*} 
\underline{\gamma}_{m} \leq \mathrm{\Omega} \sqrt{1 + \frac{n}{4m^2}}.
\end{equation*}
Moreover,
\begin{equation*}  
\lim_{m \to \infty} \underline{\gamma}_{m} = \mathrm{\Omega}.
\end{equation*}
\end{proposition}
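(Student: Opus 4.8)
The plan is to mirror the argument for Proposition \ref{prop03_01}, but to use the lower-approximation bound of Lemma \ref{lemma03_02} in place of the upper bound of Lemma \ref{lemma03_01}, reversing the direction of every inequality. The goal is to exhibit one explicit value of $\gamma_m$ for which $\mathcal{U}_{\gamma_m} \supseteq \mathcal{U}_{\textrm{out}}^{m}$; since $\underline{\gamma}_m$ is by definition the \emph{smallest} such value, any valid choice immediately furnishes an upper bound on $\underline{\gamma}_m$.

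First I would take an arbitrary point $\hat{a} + \mathrm{\Sigma}^{1/2}\xi \in \mathcal{U}_{\textrm{out}}^{m}$, so that $\xi$ satisfies the defining constraint $\sum_{j \in N} l^{m}_{j}(\xi_{j}) \leq \mathrm{\Omega}^{2}$. Lemma \ref{lemma03_02} supplies, for each $j \in N$, the pointwise bound $l^{m}_{j}(\xi_{j}) \geq \xi_{j}^{2} - \mathrm{\Omega}^{2}/4m^{2}$. Summing these $n$ inequalities and combining with the defining constraint yields
\[
\|\xi\|^{2} - \frac{n\mathrm{\Omega}^{2}}{4m^{2}} = \sum_{j \in N}\xi_{j}^{2} - \frac{n\mathrm{\Omega}^{2}}{4m^{2}} \leq \sum_{j \in N} l^{m}_{j}(\xi_{j}) \leq \mathrm{\Omega}^{2},
\]
hence $\|\xi\|^{2} \leq \mathrm{\Omega}^{2}\left(1 + \frac{n}{4m^{2}}\right)$. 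Therefore the same point lies in $\mathcal{U}_{\gamma_m}$ for $\gamma_m = \mathrm{\Omega}\sqrt{1 + n/4m^{2}}$, which establishes $\mathcal{U}_{\textrm{out}}^{m} \subseteq \mathcal{U}_{\gamma_m}$ and thus the claimed bound $\underline{\gamma}_m \leq \mathrm{\Omega}\sqrt{1 + n/4m^{2}}$.

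For the limit I would invoke the elementary fact that $\sqrt{1 + n/4m^{2}} \to 1$ as $m \to \infty$, which drives the upper bound down to $\mathrm{\Omega}$. To pin the limit exactly at $\mathrm{\Omega}$ I would also record the matching lower bound $\underline{\gamma}_m \geq \mathrm{\Omega}$: since $l^{m}_{j}$ approximates $q_{j}$ from below we have $\mathcal{U}_{\textrm{out}}^{m} \supseteq \mathcal{U} = \mathcal{U}_{\mathrm{\Omega}}$, so any ellipsoid $\mathcal{U}_{\gamma_m}$ containing $\mathcal{U}_{\textrm{out}}^{m}$ must also contain $\mathcal{U}_{\mathrm{\Omega}}$, forcing $\gamma_m \geq \mathrm{\Omega}$. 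Squeezing $\underline{\gamma}_m$ between $\mathrm{\Omega}$ and $\mathrm{\Omega}\sqrt{1+n/4m^{2}}$ then gives $\lim_{m\to\infty}\underline{\gamma}_m = \mathrm{\Omega}$.

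I do not anticipate a genuine obstacle, since the computation is the exact dual of Proposition \ref{prop03_01}; the single point requiring care is the direction of the approximation error, as the outer set is governed by the \emph{lower} bound of Lemma \ref{lemma03_02}, so the $\mathrm{\Omega}^{2}/4m^{2}$ slack adds to rather than subtracts from $\|\xi\|^{2}$, producing the factor $1 + n/4m^{2}$ instead of $1 - n/4m^{2}$. A minor bookkeeping subtlety is that $\mathcal{U}_{\textrm{out}}^{m}$ and $\mathcal{U}_{\gamma_m}$ share the same $\xi$-parametrization, so the containment of the image sets follows at once from the containment of the constraint regions in $\xi$-space, independently of whether $\mathrm{\Sigma}^{1/2}$ is injective.
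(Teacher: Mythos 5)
Your proof is correct and follows essentially the same route as the paper: both sum the lower-approximation inequality $\xi_j^2 \leq l_j^m(\xi_j) + \mathrm{\Omega}^2/4m^2$ of Lemma \ref{lemma03_02} over $j \in N$, combine with the defining constraint of $\mathcal{U}_{\textrm{out}}^{m}$ to get $\|\xi\|^2 \leq \mathrm{\Omega}^2(1+n/4m^2)$, and conclude the bound on $\underline{\gamma}_m$. In fact your treatment of the limit is slightly more complete than the paper's: the paper only observes that the upper bound tends to $\mathrm{\Omega}$ (which by itself gives $\limsup_{m\to\infty}\underline{\gamma}_m \leq \mathrm{\Omega}$), whereas your additional remark that $\mathcal{U}_{\mathrm{\Omega}} \subseteq \mathcal{U}_{\textrm{out}}^{m} \subseteq \mathcal{U}_{\gamma_m}$ forces $\underline{\gamma}_m \geq \mathrm{\Omega}$ (valid whenever $\mathrm{\Sigma} \neq 0$, so that containment of the ellipsoids pins down the radius) supplies the matching lower bound needed to conclude the limit equals $\mathrm{\Omega}$ exactly.
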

\begin{proof}
From Lemma 2, the following inequalities hold: 
\begin{equation} \label{eq03_07} 
 {\xi}_{j}^{2}  \leq l_{j}^{m}({\xi}_{j}) + \frac{\mathrm{\Omega}^{2}}{4 m^{2}}, \ \forall j \in {N}.  
\end{equation}
Moreover, the following inequality can be derived by adding inequalities (\ref{eq03_07}) for all $j \in {N}$:
\begin{equation*} 
\|{\xi}\|^{2} \leq \sum_{j \in {N}}  l_{j}^{m}({\xi}_{j}) + n \frac{\mathrm{\Omega}^{2}}{4 m^{2}} \leq \mathrm{\Omega}^{2} + n \frac{\mathrm{\Omega}^{2}}{4 m^{2}} = \mathrm{\Omega}^2 \left( 1 + \frac{n}{4m^2} \right),
\end{equation*}
when $\xi$ satisfies $\sum_{j \in {N}} l_{j}^{m}({\xi}_{j}) \leq \mathrm{\Omega}^{2}$. Therefore, $\mathcal{U}_{{\gamma}_{m}}$ with ${\gamma}_{m} = \mathrm{\Omega} \sqrt{1 + {n} / {4m^{2}}}$ has $\mathcal{U}_{\gamma_{m}}  \supseteq \mathcal{U}_{\textrm{out}}^{m}$.  This implies that the value of $\underline{\gamma}_{m}$ is at most $\mathrm{\Omega} \sqrt{1 + {n}/{4m^2}}$. The second statement also holds, because $\sqrt{1 + {n}/{4m^2}} \to 1$ as $m \to \infty$. \qed 
\end{proof}

\section{Upper and lower bounds for the second-order cone-constrained binary knapsack problem} \label{sect04}

We now consider solutions of the robust knapsack problem with constraint (\ref{eq03_05}) and the problem with constraint (\ref{eq03_09}) which provide upper and lower bounds on the optimal value of \textbf{(SOCKP)}, respectively. 

The robust constraint (\ref{eq03_05}) with the inner approximation of the uncertainty set $\mathcal{U}$ can be represented as
\begin{equation} \nonumber
\sum_{j \in N} \hat{a}_{j} x_{j} + \beta^{m}_{\textrm{in}} (x) \leq b, 
\end{equation}
where
\begin{align} 
\beta^{m}_{\textrm{in}}(x) =  \textrm{max}_{}   & \ (\mathrm{\Sigma}^{1/2}x)' \xi \nonumber \\
\textrm{subject to} & \ \sum_{j \in N} u_{j}^{m}(\xi_{j}) \leq \mathrm{\Omega}^{2}, \nonumber \\
& \ \xi \in [0,\mathrm{\Omega}]^n. \nonumber 
\end{align}
Because $u_{j}^{m}$ is a piecewise linear function, we can represent not only $\xi_{j}$ but $u_{j}^{m}(\xi_{j})$ for $j \in {N}$ by introducing additional variables $z \in [0,1]^{m \times n}$ as
\begin{equation} \nonumber
\xi_{j}=\sum_{k \in {M}} (\pi_{j}^{k} - \pi_{j}^{k-1}) z_{j}^{k} = \mathrm{\Omega}  \sum_{k \in {M}} \frac{1}{m} z_{j}^{k}, \ \forall j \in {N}, 
\end{equation}
\begin{equation} \label{eq04_04} 
u_{j}^{m}(\xi_{j})=\sum_{k \in {M}} ((\pi_{j}^{k})^{2} - (\pi_{j}^{k-1})^{2}) z_{j}^{k} = \mathrm{\Omega}^2 \sum_{k \in {M}} \frac{2k-1}{m^2}   z_{j}^{k}, \ \forall j \in {N}. 
\end{equation}
Because we want to make $z_{j}^{k}>0$ only if $z_{j}^{1}=\cdots=z_{j}^{k-1}=1$, we introduce additional binary variables $v \in \{0,1\}^{(m+1) \times n}$. Then, we can reformulate the problem as
\begin{align} 
\beta^{m}_{\textrm{in}} (x) = \textnormal{max}  & \ \sum_{j \in {N}} \sum_{k \in {M}} d_{j}^{k} x_{j} z_{j}^{k} \nonumber \\
\textnormal{subject to} & \ \sum_{j \in {N}} \sum_{k \in {M}} f_{j}^{k} z_{j}^{k} \leq m^2, \label{eq04_05} \\
& \ v_{j}^{k} \leq z_{j}^{k} \leq v_{j}^{k-1}, \ \ \ \forall j \in {N}, k \in {M}, \label{eq04_06} \\
& \ v_{j}^{k} \in \{0,1\}, \ \ \ \forall j \in {N}, k \in {M} \cup \{0\}, \nonumber \\
& \ 0 \leq z_{j}^{k} \leq 1, \ \ \ \forall j \in {N}, k \in {M}, \nonumber
\end{align}
where $d_{j}^{k} = (\pi_{j}^{k} - \pi_{j}^{k-1}) \sigma_{j} =  \mathrm{\Omega} \cdot \sigma_{j} / m$, $f_{j}^{k} = 2k-1$ for $j \in {N}, k \in {M}$. Constraints (\ref{eq04_05}) are derived by dividing equations (\ref{eq04_04}) by ${\mathrm{\Omega}^2}/{m^2}$ and adding all of them. Constraints (\ref{eq04_06}) guarantee that $z_{j}^{k}>0$ only if $z_{j}^{1}=\cdots=z_{j}^{k-1}=1$.

Although the above problem is a mixed integer program, it can be solved as a linear program. When $d_{j}^{k} x_{j}$ has a positive value, the sequence of ratios $\{d_{j}^{k} x_{j}/f_{j}^{k} \ | \ k =1,2,\cdots,m\}$ is strictly decreasing, that is, $d_{j}^{1} x_{j}/f_{j}^{1} > \cdots >  d_{j}^{m} x_{j}/f_{j}^{m}$. When $x_{j}=0$, there is an optimal solution with $z_{j}^{k}=0$ for every $k \in {M}$. Therefore, there is an optimal solution $z$ such that $z_{j}^{k}$ is greater than zero only if $z_{j}^{1}, \cdots, z_{j}^{k-1}$ are equal to one for all $k \in {M}, j \in {N}$ even if constraints (\ref{eq04_06}) are relaxed. We can also eliminate all of the variables $v_{j}^{k}, j \in {N}, k \in {M} \cup \{0\}$ because these variables only appear in constraints (\ref{eq04_06}). Hence, the problem is equivalent to the following maximization problem:
\begin{align} 
\beta^{m}_{\textrm{in}}(x) = \textnormal{max}  & \ \sum_{j \in {N}} \sum_{k \in {M}}  d_{j}^{k} x_{j} z_{j}^{k} \nonumber \\
\textnormal{subject to} & \ \sum_{j \in {N}} \sum_{k \in M} f_{j}^{k} z_{j}^{k} \leq m^2, \nonumber \\
& \ 0 \leq z_{j}^{k} \leq 1, \ \ \ \forall k \in {M}, j \in {N}.  \nonumber
\end{align}
This problem is a bounded continuous knapsack problem, which is a linear program.

Similarly, the robust constraints (\ref{eq03_09}) with the outer approximation of the uncertainty set $\mathcal{U}$ can be represented as 
\begin{equation} \nonumber
\sum_{j \in N} \hat{a}_{j} x_{j} + \beta^{m}_{\textrm{out}} (x) \leq b, 
\end{equation}
where
\begin{align} 
\beta^{m}_{\textrm{out}}(x) = \textrm{max}_{}  & \ (\mathrm{\Sigma}^{1/2}x)' \xi \nonumber \\
\textrm{subject to} & \ \sum_{j \in N} l_{j}^{m}(\xi_{j}) \leq \mathrm{\Omega}^{2}, \label{eq04_08} \\
& \ \xi \in [0,\mathrm{\Omega}]^n. \nonumber 
\end{align}
Because $l_{j}^{m} (\xi_{j}) = u_{j}^{m} (\xi_{j}) - \mathrm{\Omega}^2 / 4m^{2}$, constraint (\ref{eq04_08}) can be written as
\begin{equation} \nonumber
\sum_{j \in N} u_{j}^{m}(\xi_{j}) \leq \mathrm{\Omega}^{2} + \sum_{j \in N} \frac{\mathrm{\Omega}^{2}}{4m^{2}} = \mathrm{\Omega}^{2} ( 1 + \frac{n}{4m^{2}} ).
\end{equation}
Then, by following the previous procedure to represent $\beta^{m}_{\textrm{in}} (x)$ again, we can obtain the following representation of $\beta^{m}_{\textrm{out}}$:
\begin{align} 
\beta^{m}_{\textrm{out}}(x) = \textnormal{max}  & \ \sum_{j \in {N}} \sum_{k \in {M}}  d_{j}^{k} x_{j} z_{j}^{k} \nonumber \\
\textnormal{subject to} & \ \sum_{j \in {N}} \sum_{k \in M} f_{j}^{k} z_{j}^{k} \leq m^2 + \frac{n}{4} , \nonumber \\
& \ 0 \leq z_{j}^{k} \leq 1, \ \ \ \forall k \in {M}, j \in {N}.  \nonumber
\end{align}

Because the two representations of $\beta^{m}_{\textrm{in}}(x)$ and $\beta^{m}_{\textrm{out}}(x)$ differ only in the right-hand side of the constraint, we can combine them as
\begin{equation} \label{eq04_10} 
\sum_{j \in N} \hat{a}_{j} x_{j} + \beta^{m}(x,\mathrm{\Delta}) \leq b,
\end{equation}
where
\begin{align*} 
\beta^{m}(x,\mathrm{\Delta}) = \textnormal{maximize}  & \ \sum_{j \in {N}} \sum_{k \in {M}} d_{j}^{k} x_{j} z_{j}^{k} \\
\textnormal{subject to} & \ \sum_{j \in {N}} \sum_{k \in {M}} f_{j}^{k} z_{j}^{k} \leq \mathrm{\Delta}, \\
& \ 0 \leq z_{j}^{k} \leq 1, \ \forall j \in {N}, k \in {M}.
\end{align*}
If $\mathrm{\Delta}=m^2$ or $\mathrm{\Delta}=m^2+n/4$, then $\beta^{m}(x,\mathrm{\Delta})$ is equal to $\beta^{m}_{\textrm{in}}(x)$ or $\beta^{m}_{\textrm{out}}(x)$, from which we can obtain the upper or lower bound on the optimal value of \textbf{(SOCKP)}, respectively.

After all, we need to solve the following robust BKP to obtain the upper and lower bounds for \textbf{(SOCKP)}.
\begin{align*} 
\textbf{(RKPm)} \ \textnormal{maximize} & \ \sum_{j \in {N}} p_{j} x_{j} \\
\textnormal{subject to} & \ \sum_{j \in {N}} \hat{a}_{j} x_{j} + \beta^{m}(x,\mathrm{\Delta}) \leq b, \\
& \ x \in \{0,1\}^{n}. 
\end{align*}

Because $\beta^{m}(x,\mathrm{\Delta})$ can be obtained by solving a linear program, we can solve \textbf{(RKPm)} as a mixed integer linear program by injecting the dual for $\beta^{m}(x,\mathrm{\Delta})$ into constraint (\ref{eq04_10}) as the approach attempted by Han et al. \cite{Han16}. This can be one of the polyhedral approximations of the solution set for second-order cone-constrained programs, which is different from the previous ones of Vielma et al. \cite{Vielma08}, Vielma et al. \cite{Vielma17} and Lubin et al. \cite{Lubin18}. However, we propose a combinatorial algorithm which can solve \textbf{(RKPm)} more efficiently. 

We simplify some notation for the later explanation. For this, we first arrange the order of $(j,k) \in {N} \times {M}$ in descending order of $d_{j}^{k} / f_{j}^{k}$ values. Ties are broken arbitrarily. Let ${L} := \{ 1, \cdots, nm \}$ be a set of the ordinal ranks of $(j,k)$, $j \in {N}$, $k \in {M}$. When $l$ is the ordinal rank of $(j,k)$, we let $j(l):=j$, $k(l):=k$, $d_{l} := d_{j}^{k}$, $f_{l} := f_{j}^{k}$, and $z_{l} := z_{j}^{k}$.

We also define some additional notation. Let ${L}_l$ for $l \in {L}$ be a subset of ${L}$ whose elements $l' \in {L}$ have greater ${d_{l'}}/{f_{l'}}$ value than the $l$-th element of ${L}$, that is, ${L}_l=\{l' \in {L} \ | \ {d_{l'}}/{f_{l'}} > {d_{l}}/{f_{l}} \}$,  $l \in {L}$. Let $L(S) := \{ l \in {L} \ | \ j(l) \in S\} \cup \{nm+1\}, S \subseteq {N}$, where the $(nm+1)$-th element is a dummy one with $d_{nm+1} = 0$ and $f_{nm+1} = \mathrm{\Delta}$. We also let $x(S)$ be the characteristic vector of $S \subseteq {N}$, that is, $x(S)_{j} = 1$ if $j \in S$, $x(S)_{j} = 0$ otherwise. 

For a given $x(S)$, $S \subseteq {N}$, we have $d_{l} x_{j(l)} = 0$ if $j(l) \notin S$, hence we can represent $\beta^{m}(x,\mathrm{\Delta})$ as
\begin{align*} 
\beta^{m}(x(S),\mathrm{\Delta}) = \textnormal{maximize}  & \ \sum_{l \in L(S)} d_{l} z_{l} \\
\textnormal{subject to} & \ \sum_{l \in L(S)} f_{l} z_{l} \leq \mathrm{\Delta}, \\
& \ 0 \leq z_{l} \leq 1, \ \forall l \in L(S),
\end{align*}
for $S \subseteq {N}$, $\mathrm{\Delta} \geq m^{2}$. 

Let $l^{*}$ be $l \in L(S)$, $S \subseteq {N}$ satisfying $\sum_{l \in L(S) | l < l^{*}} f_{l} < \mathrm{\Delta}$ and $\sum_{l \in L(S) | l < l^{*}} f_{l} + f_{l^{*}} \geq \mathrm{\Delta}$, and let $T^{*}$ be a set  $\{ l \in L(S) \ | \ l < l^{*}\}$. Then, it has an optimal solution $z^{*}$ satisfying $z^{*}_{l}=1$ for $l \in T^{*}$, $z^{*}_{l}=0$ for $l \notin T^{*} \cup \{l^{*}\}$, and $z^{*}_{l^{*}} = (\mathrm{\Delta}-\sum_{l \in T^{*}} f_{l}) / f_{l^{*}}$. We note that $z^{*}$ can be obtained in $\mathrm{O}(|{L}| \log |{L}|) = \mathrm{O}(nm \log (nm))$ time complexity for sorting the values of $d_{l} / f_{l}$, $l \in L(S)$. 

Now, we can show that an optimal solution to \textbf{(RKPm)} can be obtained by solving the ordinary BKP repeatedly with the same objective function $\sum_{j \in {N}} p_{j} x_{j}$, but with different constraints. 

\begin{theorem} \label{thm04_01}
Let ${B}$ be the set of feasible solutions to \textbf{(RKPm)}:
\begin{equation} \nonumber
{B} = \left\{ x \in \{0,1\}^{n} \ | \ \sum_{j \in {N}} \hat{a}_{j} x_{j} + \beta^{m}(x,\mathrm{\Delta}) \leq b \right\}.
\end{equation}
Let ${B}_{l}$, $l \in {L} \cup \{nm+1\}$ be the set of feasible solutions of a BKP represented by
\begin{equation} \nonumber
{B}_{l} = \left\{ x \in \{0,1\}^{n} \ | \ \sum_{j \in {N}} \hat{a}_{j} x_{j} + \sum_{l' \in {L}_{l}} (d_{l'}-\frac{d_{l}}{f_{l}} f_{l'}) x_{j(l')} \leq b - \frac{d_{l}}{f_{l}} \mathrm{\Delta} \right\},
\end{equation}
for $l \in {L}$ and 
\begin{equation} \nonumber
{B}_{nm+1} = \left\{ x \in \{0,1\}^{n} \ | \ \sum_{j \in {N}} \hat{a}_{j} x_{j} + \sum_{l \in {L}} d_{l} x_{j(l)} \leq b \right\}. 
\end{equation}
Then, 
\begin{equation} \nonumber
{B} = \bigcup_{l \in \{ \hat{l}, \hat{l}+1,\cdots,nm-1,nm+1\}} {B}_{l},
\end{equation}
where 
$\hat{l} = \textnormal{min} \{ l \in {L} \ | \ \sum_{l'=1}^{l} f_{l'} \geq \mathrm{\Delta} \}$.
\end{theorem}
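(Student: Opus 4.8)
The plan is to treat the theorem as a statement about when the fractional knapsack $\beta^{m}(x,\mathrm{\Delta})$ attains a particular \emph{break index}, and to turn the nonlinear constraint defining ${B}$ into a finite union of linear (ordinary BKP) constraints, one per admissible break index. First I would fix $x = x(S)$ and substitute the explicit greedy optimum $\beta^{m}(x(S),\mathrm{\Delta}) = \sum_{l' \in T^{*}} d_{l'} + d_{l^{*}}(\mathrm{\Delta} - \sum_{l' \in T^{*}} f_{l'})/f_{l^{*}}$, already recorded before the theorem, into the inequality $\sum_{j}\hat{a}_{j}x_{j} + \beta^{m}(x,\mathrm{\Delta}) \leq b$. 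Collecting the coefficients of each $x_{j(l')}$ and writing $l=l^{*}$, the left-hand side becomes $g_{l}(x) := \sum_{j \in {N}}\hat{a}_{j}x_{j} + \sum_{l' \in {L}_{l}}(d_{l'} - (d_{l}/f_{l})f_{l'})x_{j(l')} + (d_{l}/f_{l})\mathrm{\Delta}$, which is exactly the left-hand side of the ${B}_{l}$ inequality after moving the constant $(d_{l}/f_{l})\mathrm{\Delta}$ across, so $x \in {B}_{l} \Leftrightarrow g_{l}(x) \leq b$. Matching $T^{*}$ with ${L}_{l}$ is where \textbf{ties} in the ratios must be handled, but a tied index $l'$ satisfies $d_{l'} - (d_{l}/f_{l})f_{l'} = 0$, so it is immaterial whether it is grouped by strict ratio (as in ${L}_{l}$) or by rank (as in $T^{*}$); the value $g_{l}(x)$ is unchanged, so ties are harmless.

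The second step is the duality observation that yields both inclusions at once. Using the single pivot ratio $d_{l}/f_{l}$ as the capacity multiplier gives a dual-feasible solution of the fractional knapsack whose objective is precisely $g_{l}(x) - \sum_{j}\hat{a}_{j}x_{j}$, so by weak LP duality $g_{l}(x) \geq \sum_{j}\hat{a}_{j}x_{j} + \beta^{m}(x,\mathrm{\Delta})$ for \emph{every} index $l$, with equality exactly when $l$ is a valid break index of $x$. Hence $\sum_{j}\hat{a}_{j}x_{j} + \beta^{m}(x,\mathrm{\Delta}) = \min_{l} g_{l}(x)$. From this, $\bigcup_{l} {B}_{l} \subseteq {B}$ is immediate (if $g_{l}(x) \leq b$ then $\sum_{j}\hat{a}_{j}x_{j} + \beta^{m}(x,\mathrm{\Delta}) \leq g_{l}(x) \leq b$), and conversely if $x \in {B}$ then the minimum is attained at the break index $l^{*}(x)$, giving $g_{l^{*}(x)}(x) \leq b$, i.e.\ $x \in {B}_{l^{*}(x)}$. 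Everything then reduces to identifying the set of attainable break indices.

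The third step pins down that index set. For the lower cutoff I would use a monotonicity argument: for every $S \subseteq {N}$ the restricted cumulative weight $\sum_{l' \leq l,\, l' \in L(S)} f_{l'}$ is dominated by the full-set cumulative $\sum_{l' \leq l} f_{l'}$, so the restricted cumulative cannot reach $\mathrm{\Delta}$ before the full-set one does; therefore $l^{*}(x) \geq \hat{l}$ for all $x$, with $\hat{l} = \min\{l \in {L} : \sum_{l'=1}^{l} f_{l'} \geq \mathrm{\Delta}\}$. The dummy index $nm+1$, with $f_{nm+1} = \mathrm{\Delta}$, covers the remaining case in which packing all of $S$ still fails to reach $\mathrm{\Delta}$, so $l^{*}(x) \in \{\hat{l}, \dots, nm\} \cup \{nm+1\}$.

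The delicate part, and the one I expect to be the main obstacle, is \emph{removing the index $nm$} to obtain exactly $\{\hat{l}, \hat{l}+1, \dots, nm-1, nm+1\}$. Because $nm$ carries the globally smallest ratio, its being the break index forces every selected piece to precede it in the order, so the pre-break weight equals the total selected weight minus $f_{nm} = 2m-1$; I would show that combined with the admissible capacity this pins $z_{nm}$ to an exact fill, making $g_{l}(x)$ flat between the pivot $d_{nm}/f_{nm}$ and the dummy pivot $0$, so that $g_{nm}(x) = g_{nm+1}(x)$ and membership in ${B}_{nm}$ is already recorded by ${B}_{nm+1}$. I would isolate this exact-fill/tie reduction as a short lemma, since it is the only place where the specific value $f_{nm}=2m-1$ and the choice of $\mathrm{\Delta}$ enter, and it is the step most prone to boundary and off-by-one errors; I would in particular check it against small instances of both the inner capacity $\mathrm{\Delta}=m^{2}$ and the outer capacity $\mathrm{\Delta}=m^{2}+n/4$ before declaring the index $nm$ genuinely redundant.
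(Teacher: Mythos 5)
Your first three steps are correct, and they take a genuinely different route from the paper. The paper proves the forward inclusion by substituting the greedy optimum into the constraint and rearranging until the $B_{l^*}$ inequality appears, and proves the reverse inclusion by a two-case chain of inequalities (the cases $d_{l^*}/f_{l^*} \le d_{l'}/f_{l'}$ and $>$); that is exactly a hand verification of weak duality and complementary slackness for the fractional knapsack, which your identity $\sum_{j}\hat{a}_{j}x_{j}+\beta^{m}(x,\mathrm{\Delta})=\min_{l}g_{l}(x)$ delivers in one stroke, for both inclusions at once. You also prove $l^{*}(x)\ge\hat{l}$ explicitly, a fact the paper uses silently.

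The gap is in your fourth step, exactly where you predicted: the exact-fill lemma is true for $\mathrm{\Delta}=m^{2}$ but \emph{false} for $\mathrm{\Delta}=m^{2}+n/4$, so the index $nm$ cannot be declared redundant. For $\mathrm{\Delta}=m^{2}$ your argument works: a break at $nm$ requires $j(nm)\in S$ and $|S|m^{2}-(2m-1)<m^{2}\le|S|m^{2}$, which forces $|S|=1$ and an exact fill, so $\lambda=0$ is also dual-optimal and membership passes to $B_{nm+1}$. But take $n=5$, $m=2$, so $\mathrm{\Delta}=5.25$ and every item has segments with $f$-values $1,3$; choose $\sigma_{1}>\cdots>\sigma_{5}>0$ generic so that all ratios $d_{l}/f_{l}$ are distinct (then rank $nm$ is segment $(5,2)$), and let $S=\{1,5\}$. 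The pre-break weight is $1+3+1=5<5.25$ and the total weight is $8>5.25$, so the break index is $nm$ with $z^{*}_{nm}=1/12$ strictly fractional. The fractional knapsack then has a \emph{unique} optimal dual multiplier $\lambda^{*}=d_{nm}/f_{nm}$, so $g_{l}(x(S))>\sum_{j\in S}\hat{a}_{j}+\beta^{m}(x(S),\mathrm{\Delta})$ strictly for every $l$ with $d_{l}/f_{l}\neq\lambda^{*}$, in particular for every $l\in\{\hat{l},\dots,nm-1,nm+1\}$. Setting $b=\sum_{j\in S}\hat{a}_{j}+\beta^{m}(x(S),\mathrm{\Delta})$ gives $x(S)\in B$ but $x(S)\notin B_{l}$ for every $l$ in the theorem's index set. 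So no repair of your step four is possible, because the claim being proved is itself false for the outer capacity; the statement becomes true (and your duality argument completes immediately) once $nm$ is restored to the union, which is harmless since $B_{nm}\subseteq B$ follows from your weak-duality bound and costs one extra ordinary BKP.

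You should also know that the paper's own proof does not fill this hole: it concludes $x(S)\in B_{l^{*}}$ for the break index $l^{*}$ and tacitly assumes $l^{*}\in\{\hat{l},\dots,nm-1,nm+1\}$, never excluding $l^{*}=nm$. Your plan to test the reduction on small instances with $\mathrm{\Delta}=m^{2}+n/4$ before trusting it was exactly the right instinct; carried out, it exposes an error in the theorem as stated rather than a missing lemma in your proof.
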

\begin{proof} 
Let $\mathcal{T}(S)$ be the set of all $(T,t)$ for a set $S \subseteq {N}$, where $T$ is a proper subset of $L(S)$ and $t \notin T$ is one element in $L(S)$ such that $\sum_{l \in T} f_{l} < \mathrm{\Delta}$, $\sum_{l \in T} f_{l} + f_{t} \geq \mathrm{\Delta}$. We note that $\mathcal{T}(S)$ is nonempty because of the $(nm+1)$-th dummy element.

We first show that ${B} \subseteq \bigcup_{l \in \{ \hat{l}, \hat{l}+1,\cdots,nm-1,nm+1\}} B_{l}$. Suppose that the characteristic vector $x(S)$ of $S \subseteq {N}$ is in ${B}$. Because it is clear that $(T^{*},l^{*}) \in \mathcal{T}(S)$, we have
\begin{align*} 
& \ \sum_{j \in S} \hat{a}_{j} + \beta^{m}(x(S),\mathrm{\Delta}) \\
= & \ \sum_{j \in S} \hat{a}_{j} + \max_{(T,t) \in \mathcal{T}(S)} \left\{ \sum_{l \in T} d_{l} + \left( \frac{\mathrm{\Delta} - \sum_{l \in T} f_{l}}{f_{t}} \right) d_{t} \right\} \\
= & \ \sum_{j \in S} \hat{a}_{j} + \sum_{l \in T^{*}} d_{l} + \left( \frac{\mathrm{\Delta} - \sum_{l \in T^{*}} f_{l} }{f_{l^{*}}} \right) d_{l^{*}} \ \leq b.
\end{align*}
$T^{*} \cap L_{l^{*}} = L(S) \cap L_{l^{*}}$ is derived from $L(S) \cap L_{l^{*}} \subseteq T^{*} \subseteq L(S)$, and $T^{*} \setminus L_{l^{*}} = \{ l \in T^{*} \ | \ d_{l} / f_{l} = d_{l^{*}} / f_{l^{*}} \}$. Then, we can express the following equation:
\begin{align*} 
  & \ \sum_{j \in S} \hat{a}_{j} + \sum_{l \in T^{*}} d_{l} + \left( \frac{\mathrm{\Delta} - \sum_{l \in T^{*}} f_{l} }{f_{l^{*}}} \right) d_{l^{*}} \\
= & \ \sum_{j \in S} \hat{a}_{j} + \sum_{l \in T^{*} \cap L_{l^{*}}} d_{l} + \sum_{l \in T^{*} \setminus L_{l^{*}}} d_{l} + \left( \frac{\mathrm{\Delta} - \sum_{l \in T^{*}} f_{l} }{f_{l^{*}}} \right) d_{l^{*}} \\
= & \ \sum_{j \in S} \hat{a}_{j} + \sum_{l \in T^{*} \cap L_{l^{*}}} d_{l} + \sum_{l \in T^{*} \setminus L_{l^{*}}} \frac{d_{l^{*}}}{f_{l^{*}}} f_{l} + \left( \frac{\mathrm{\Delta} - \sum_{l \in T^{*}} f_{l} }{f_{l^{*}}} \right) d_{l^{*}} \\
= & \ \sum_{j \in S} \hat{a}_{j} + \sum_{l \in T^{*} \cap L_{l^{*}}} d_{l} + \left( \frac{\mathrm{\Delta} - \sum_{l \in T^{*}} f_{l} + \sum_{l \in T^{*} \setminus L_{l^{*}}} f_{l}}{f_{l^{*}}} \right) d_{l^{*}} \\
= & \ \sum_{j \in S} \hat{a}_{j} + \sum_{l \in T^{*} \cap L_{l^{*}}} d_{l} + \left( \frac{\mathrm{\Delta} - \sum_{l \in T^{*} \cap L_{l^{*}}} f_{l}}{f_{l^{*}}} \right) d_{l^{*}} \\
= & \ \sum_{j \in S} \hat{a}_{j} + \sum_{l \in T^{*} \cap L_{l^{*}}} \left( d_{l} - \frac{d_{l^{*}}}{f_{l^{*}}} f_{l} \right) + \frac{d_{l^{*}}}{f_{l^{*}}} \mathrm{\Delta} \\
= & \ \sum_{j \in S} \hat{a}_{j} + \sum_{l \in L(S) \cap L_{l^{*}}} \left( d_{l} - \frac{d_{l^{*}}}{f_{l^{*}}} f_{l} \right) + \frac{d_{l^{*}}}{f_{l^{*}}} \mathrm{\Delta} \leq b
\end{align*}
Therefore, 
\begin{equation*} 
\sum_{j \in S} \hat{a}_{j} + \sum_{l \in L(S) \cap L_{l^{*}}} \left( d_{l} - \frac{d_{l^{*}}}{f_{l^{*}}} f_{l} \right) \leq b - \frac{d_{l^{*}}}{f_{l^{*}}} \mathrm{\Delta},
\end{equation*}
and then $x(S) \in B_{l^{*}}$.

Next, we show that $\bigcup_{l \in \{ \hat{l}, \hat{l}+1,\cdots,nm-1,nm+1\}} B_{l} \subseteq {B}$. Suppose that the characteristic vector $x(S)$ of a nonempty $S \subseteq {N}$ is in $B_{l'}$ for some $l' \in \{ \hat{l}, \hat{l}+1,\cdots,nm-1,nm+1\}$. It is trivial that $x(S) \in {B}$ if $x(S) \in B_{nm+1}$, so we assume $l' \leq nm-1$. If a solution $x(S)$ is in $B_{l'}$, it implies that 
\begin{equation*} 
\sum_{j \in S} \hat{a}_{j} + \sum_{l \in L(S) \cap L_{l'}} \left( d_{l} - \frac{d_{l'}}{f_{l'}} f_{l} \right) \leq b - \frac{d_{l'}}{f_{l'}} \mathrm{\Delta}.
\end{equation*}
We also define $l^{*}$ and $T^{*}$ as before. Then,
\begin{equation*} 
\sum_{j \in S} \hat{a}_{j} + \beta^{m}(x(S),\mathrm{\Delta}) =\ \sum_{j \in S} \hat{a}_{j} + \sum_{l \in T^{*}} d_{l} + \left( \frac{\mathrm{\Delta} - \sum_{l \in T^{*}} f_{l} }{f_{l^{*}}} \right) d_{l^{*}}.
\end{equation*}
If $d_{l^{*}} / f_{l^{*}} \leq d_{l'} / f_{l'}$,
\begin{align*} 
  & \ \sum_{j \in S} \hat{a}_{j} + \sum_{l \in T^{*}} d_{l} + \left( \frac{\mathrm{\Delta} - \sum_{l \in T^{*}} f_{l} }{f_{l^{*}}} \right) d_{l^{*}} \\ 
 = & \ \sum_{j \in S} \hat{a}_{j} + \sum_{l \in T^{*} \cap L_{l'}} d_{l} + \sum_{l \in T^{*} \setminus L_{l'}} d_{l} + \frac{d_{l^{*}}}{f_{l^{*}}}\left(\mathrm{\Delta} - \sum_{l \in T^{*}} f_{l}\right)  \\
\leq & \ \sum_{j \in S} \hat{a}_{j} + \sum_{l \in T^{*} \cap L_{l'}} d_{l} + \sum_{l \in T^{*} \setminus L_{l'}} \frac{d_{l'}}{f_{l'}} f_{l} + \frac{d_{l^{'}}}{f_{l^{'}}}\left(\mathrm{\Delta} - \sum_{l \in T^{*}} f_{l}\right) \ \ (\textrm{since} \ l \notin L_{l'}.) \\
= & \ \sum_{j \in S} \hat{a}_{j} + \sum_{l \in T^{*} \cap L_{l'}} \left( d_{l} - \frac{d_{l'}}{f_{l'}} f_{l} \right)+ \sum_{l \in T^{*}} \frac{d_{l'}}{f_{l'}} f_{l} + \frac{d_{l^{'}}}{f_{l^{'}}}\left(\mathrm{\Delta} - \sum_{l \in T^{*}} f_{l}\right)   \\
= & \ \sum_{j \in S} \hat{a}_{j} + \sum_{l \in T^{*} \cap L_{l'}} \left( d_{l} - \frac{d_{l'}}{f_{l'}} f_{l} \right) +  \frac{d_{l^{'}}}{f_{l^{'}}} \mathrm{\Delta}  \\
\leq & \ \sum_{j \in S} \hat{a}_{j} + \sum_{l \in L(S) \cap L_{l'}} \left( d_{l} - \frac{d_{l'}}{f_{l'}} f_{l} \right) +  \frac{d_{l^{'}}}{f_{l^{'}}} \mathrm{\Delta}.
\end{align*}
If $d_{l^{*}} / f_{l^{*}} > d_{l'} / f_{l'}$,
\begin{align*} 
  & \ \sum_{j \in S} \hat{a}_{j} + \sum_{l \in T^{*}} d_{l} + \left( \frac{\mathrm{\Delta} - \sum_{l \in T^{*}} f_{l} }{f_{l^{*}}} \right) d_{l^{*}} \\ 
= & \ \sum_{j \in S} \hat{a}_{j} + \sum_{l \in T^{*}} \left( d_{l} - \frac{d_{l'}}{f_{l'}} f_{l} \right) + \left( \frac{\mathrm{\Delta} - \sum_{l \in T^{*}} f_{l} }{f_{l^{*}}} \right) \left( d_{l^{*}} - \frac{d_{l'}}{f_{l'}} f_{l^{*}} \right) \\
 & \ \ \ \ \ \ \ + \sum_{l \in T^{*}} \frac{d_{l'}}{f_{l'}} f_{l} + \left( \frac{\mathrm{\Delta} - \sum_{l \in T^{*}} f_{l} }{f_{l^{*}}} \right) \frac{d_{l'}}{f_{l'}} f_{l^{*}} \\
\leq & \ \sum_{j \in S} \hat{a}_{j} + \sum_{l \in T^{*} \cup \{l^{*} \}} \left( d_{l} - \frac{d_{l'}}{f_{l'}} f_{l} \right) + \sum_{l \in T^{*}} \frac{d_{l'}}{f_{l'}} f_{l}  + \frac{d_{l'}}{f_{l'}} \left( \mathrm{\Delta} - \sum_{l \in T^{*}} f_{l} \right) \\
= & \ \sum_{j \in S} \hat{a}_{j} + \sum_{l \in T^{*} \cup \{l^{*} \}} \left( d_{l} - \frac{d_{l'}}{f_{l'}} f_{l} \right) + \frac{d_{l'}}{f_{l'}} \mathrm{\Delta} \\
\leq & \ \sum_{j \in S} \hat{a}_{j} + \sum_{l \in L(S) \cap L_{l'}} \left( d_{l} - \frac{d_{l'}}{f_{l'}} f_{l} \right) +  \frac{d_{l^{'}}}{f_{l^{'}}} \mathrm{\Delta}. \ \ (\textrm{since} \  T^{*} \cup \{l^{*} \} \subseteq L(S) \cap L_{l'}.) 
\end{align*}
Therefore,
\begin{equation*} 
\sum_{j \in S} \hat{a}_{j} + \beta^{m}(x(S),\mathrm{\Delta}) \leq \sum_{j \in S} \hat{a}_{j} + \sum_{l \in L(S) \cap L_{l'}} \left( d_{l} - \frac{d_{l'}}{f_{l'}} f_{l} \right) +  \frac{d_{l^{'}}}{f_{l^{'}}} \mathrm{\Delta},
\end{equation*}
and then,
\begin{equation*} 
\sum_{j \in S} \hat{a}_{j} + \beta^{m}(x(S),\mathrm{\Delta}) \leq b.
\end{equation*}
This shows $x(S) \in {B}$. Therefore, ${B} = \bigcup_{l \in \{ \hat{l}, \hat{l}+1,\cdots,nm-1,nm+1\}} {B}_{l}$. \qed
\end{proof}

By Theorem \ref{thm04_01}, to solve \textbf{(RKPm)}, we only need to solve ordinary BKPs with solution set $B_{l}$, $l = \hat{l},\cdots,nm-1,nm+1$ and take the best value. We obtain an upper bound when $\mathrm{\Delta}=m^2$, and we obtain a lower bound when $\mathrm{\Delta}=m^2+n/4$. The solution obtained when $\mathrm{\Delta}=m^2+n/4$ is guaranteed to be a good feasible solution to \textbf{(SOCKP)}.


\section{Probability guarantees of feasible solutions to \textbf{(RKPm)}} \label{sect05}

In this section, we consider the bounds on the probability that feasible solutions to \textbf{(RKPm)} satisfy the chance constraint (\ref{eq01_01}) or the distributionally robust chance constraint (\ref{eq01_02}). We consider the case of $\mathrm{\Delta} = m^2$ in \textbf{(RKPm)}. That is we use the inner polyhedral approximation $\mathcal{U}^{m}_{in}$ of the uncertainty set $\mathcal{U}$. 

An optimal solution to \textbf{(RKPm)} with $\mathrm{\Delta} = m^2$ provides an upper bound on the optimal value of \textbf{(CKP)}, but the optimal solution may not satisfy constraint (\ref{eq01_01}). Han et al. \cite{Han16} investigated the lower bound on the probability that feasible solutions to the robust knapsack problem obtained with their own inner polyhedral approximation of $\mathcal{U}$ satisfy constraint (\ref{eq01_01}). We can strengthen the bounds on the probability by using our approximation scheme.    

\begin{proposition} \label{prop05_01}
Assume that $a_{j}$, $j \in {N}$ are independent normally distributed random variables. Let $\underline{B}^{m}_{\textrm{CKP}}$ be the set of feasible solutions to \textbf{(RKPm)} with $\mathrm{\Omega} = \mathrm{\Phi}^{-1} (\rho)$ and $\mathrm{\Delta} = m^2$. Let $\underline{\rho}^{m}_{\textrm{CKP}} := \min_{x \in \underline{B}^{m}_{\textrm{CKP}}} \textnormal{Prob} \left\{ a' x \leq b \right\}$. Then, 
\begin{equation} \label{eq05_01}
\rho - \underline{\rho}^{m}_{\textrm{CKP}} \leq \frac{1}{\sqrt{2\pi}} \mathrm{\Phi}^{-1} (\rho) \left( 1 - \sqrt{1 - \frac{n}{4m^{2}}} \right) \mathrm{exp} \left( - \frac{1}{2} (\mathrm{\Phi}^{-1}\left(\rho) \right)^{2}  \left( 1 - \frac{n}{4m^{2}} \right) \right),
\end{equation}
for ${m} \geq {\sqrt{n}/2}$. Moreover,
\begin{equation*}  
\lim_{m \to \infty} \underline{\rho}^{m}_{\textrm{CKP}} \geq \rho.
\end{equation*}
\end{proposition}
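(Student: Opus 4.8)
The plan is to reduce the whole statement to a one-dimensional comparison of normal tail probabilities. First I would take an arbitrary $x \in \underline{B}^{m}_{\textrm{CKP}}$; by definition it satisfies the robust constraint (\ref{eq03_05}) associated with the inner approximation $\mathcal{U}^{m}_{\textrm{in}}$. By Proposition \ref{prop03_01}, the ellipsoid $\mathcal{U}_{\gamma_{m}}$ with $\gamma_{m} = \mathrm{\Omega}\sqrt{1 - n/4m^{2}}$ is contained in $\mathcal{U}^{m}_{\textrm{in}}$, so $x$ also satisfies $\sum_{j \in N} a_{j} x_{j} \leq b$ for all $a \in \mathcal{U}_{\gamma_{m}}$. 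Taking the support function of the ellipsoid, exactly as in the equivalence between (\ref{eq03_00}) and the SOC constraint, this robust constraint is the second-order cone constraint with $\mathrm{\Omega}$ replaced by $\gamma_{m}$:
\begin{equation*}
\sum_{j \in N} \hat{a}_{j} x_{j} + \gamma_{m} \sqrt{\sum_{j \in N} \sigma_{j}^{2} x_{j}^{2}} \leq b.
\end{equation*}

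Next I would exploit normality. Since the $a_{j}$ are independent normal and $x_{j} \in \{0,1\}$, the random variable $a'x$ is normal with mean $\sum_{j} \hat{a}_{j} x_{j}$ and standard deviation $\sqrt{\sum_{j} \sigma_{j}^{2} x_{j}^{2}}$, so $\textnormal{Prob}\{a'x \leq b\} = \mathrm{\Phi}\big((b - \sum_{j} \hat{a}_{j} x_{j})/\sqrt{\sum_{j}\sigma_{j}^{2}x_{j}^{2}}\big)$ whenever the variance is positive (and equals $1 \geq \rho$ when it vanishes). The displayed constraint says precisely that the standardized slack is at least $\gamma_{m}$, and since $\mathrm{\Phi}$ is increasing this gives $\textnormal{Prob}\{a'x \leq b\} \geq \mathrm{\Phi}(\gamma_{m})$. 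As $x$ was arbitrary, $\underline{\rho}^{m}_{\textrm{CKP}} \geq \mathrm{\Phi}(\gamma_{m}) = \mathrm{\Phi}\big(\mathrm{\Phi}^{-1}(\rho)\sqrt{1 - n/4m^{2}}\big)$, hence $\rho - \underline{\rho}^{m}_{\textrm{CKP}} \leq \mathrm{\Phi}(\mathrm{\Phi}^{-1}(\rho)) - \mathrm{\Phi}(\gamma_{m})$.

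The only remaining work is to bound this difference of two values of $\mathrm{\Phi}$, and I expect this to be the main, though modest, obstacle. Writing $c := \mathrm{\Phi}^{-1}(\rho) \geq 0$ (using $\rho \geq 0.5$) and $s := \sqrt{1 - n/4m^{2}} \in [0,1)$ for $m \geq \sqrt{n}/2$, the mean value theorem gives $\mathrm{\Phi}(c) - \mathrm{\Phi}(cs) = \varphi(\xi)\,c(1-s)$ for some $\xi \in (cs, c)$, where $\varphi(t) = \tfrac{1}{\sqrt{2\pi}}e^{-t^{2}/2}$ is the standard normal density. The delicate point is the monotonicity bookkeeping: because $cs \geq 0$ and $\varphi$ is decreasing on $[0,\infty)$, its largest value on the interval is $\varphi(cs)$, so $\varphi(\xi) \leq \varphi(cs) = \tfrac{1}{\sqrt{2\pi}} \exp(-\tfrac{1}{2}c^{2}s^{2})$. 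Substituting $c(1-s) = \mathrm{\Phi}^{-1}(\rho)(1 - \sqrt{1 - n/4m^{2}})$ and $c^{2}s^{2} = (\mathrm{\Phi}^{-1}(\rho))^{2}(1 - n/4m^{2})$ reproduces exactly the bound (\ref{eq05_01}).

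For the limit, as $m \to \infty$ we have $s \to 1$, so the factor $(1-s)$ and the entire right-hand side of (\ref{eq05_01}) tend to $0$; equivalently $\gamma_{m} \to \mathrm{\Omega} = \mathrm{\Phi}^{-1}(\rho)$ and $\mathrm{\Phi}(\gamma_{m}) \to \rho$ by continuity of $\mathrm{\Phi}$. Since $\rho - \underline{\rho}^{m}_{\textrm{CKP}}$ is squeezed between $0$ and a vanishing bound, $\liminf_{m} \underline{\rho}^{m}_{\textrm{CKP}} \geq \rho$, which yields the claimed limit. I would also dispatch the degenerate case $\rho = 0.5$ separately, where $\mathrm{\Phi}^{-1}(\rho) = 0$ makes both sides zero and the inequality is immediate.
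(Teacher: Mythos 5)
Your proof is correct, and it is essentially the paper's own argument written out in full: the paper proves this proposition by citing Proposition 2 of Han et al.\ \cite{Han16} and noting that the inscribed-ellipsoid radius $\bar{\gamma}_{m} \geq \mathrm{\Omega}\sqrt{1-n/4m^{2}}$ from Proposition \ref{prop03_01} replaces Han et al.'s $\sqrt{1-n/4m}$, which is exactly the chain you reconstruct (robust feasibility w.r.t.\ the inscribed ellipsoid $\Rightarrow$ SOC constraint with $\gamma_{m}$ $\Rightarrow$ $\underline{\rho}^{m}_{\textrm{CKP}} \geq \mathrm{\Phi}(\gamma_{m})$, followed by a mean-value bound on $\mathrm{\Phi}(c)-\mathrm{\Phi}(cs)$ using monotonicity of the normal density on $[0,\infty)$). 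Your self-contained writeup, including the degenerate cases, matches the intended proof; no gaps.
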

\begin{proof}
We refer to Proposition 2 in Han et al. \cite{Han16} for details of the proof. A major difference is that $\bar{\gamma}_{m} \geq \mathrm{\Omega} \sqrt{1 - {n}/{4m^2}}$ in Proposition \ref{prop03_01} of this paper can be used instead of  ${r}_{m} \geq \mathrm{\Phi}^{-1}(\rho) \sqrt{1 - {n}/{4m}}$ in Han et al. \cite{Han16} when $\mathrm{\Omega} = \mathrm{\Phi}^{-1}(\rho)$. This improvement on bound leads to the improved result (\ref{eq05_01}).
\qed
\end{proof}

From the improvement of Proposition \ref{prop05_01}, we can also decrease the minimum number of the linear segments $m$ required to make feasible solutions to \textbf{(RKPm)} satisfy constraint (\ref{eq01_01}) with the guaranteed probability as shown in the following corollary.
 
\begin{corollary} \label{cor05_01}
For fixed $\rho<1$, the minimum number of $m$ that makes $\rho - \underline{\rho}^{m}_{\textrm{CKP}}$ less than $\delta > 0$ is $\mathrm{O} ( \sqrt{{n}/{\delta}} )$.
\end{corollary}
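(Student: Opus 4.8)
The plan is to start from the explicit inequality in Proposition \ref{prop05_01} and collapse its right-hand side into a clean expression of the form (constant) $\cdot\, n/m^{2}$, from which the required order of $m$ can be read off directly. Throughout, I fix $\rho < 1$, so that $\mathrm{\Omega} := \mathrm{\Phi}^{-1}(\rho)$ is a finite positive constant that does not depend on $n$, $m$, or $\delta$. The key observation is that the right-hand side of Proposition \ref{prop05_01} is an upper bound on $\rho - \underline{\rho}^{m}_{\textrm{CKP}}$, so it suffices to exhibit some $m = \mathrm{O}(\sqrt{n/\delta})$ for which that upper bound is already below $\delta$.

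First I would bound the square-root factor. Since $m \geq \sqrt{n}/2$ implies $t := n/(4m^{2}) \in [0,1]$, the elementary inequality $\sqrt{1-t} \geq 1-t$ on $[0,1]$ gives $1 - \sqrt{1 - n/(4m^{2})} \leq n/(4m^{2})$. Next I would bound the exponential factor: because $1 - n/(4m^{2}) \geq 0$, the exponent $-\tfrac{1}{2}\mathrm{\Omega}^{2}\left(1 - n/(4m^{2})\right)$ is nonpositive, so the exponential is at most $1$. Substituting both estimates into Proposition \ref{prop05_01} yields
\begin{equation*}
\rho - \underline{\rho}^{m}_{\textrm{CKP}} \leq \frac{\mathrm{\Omega}}{4\sqrt{2\pi}} \cdot \frac{n}{m^{2}}.
\end{equation*}

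To force the left-hand side below $\delta$ it then suffices to make the right-hand side at most $\delta$, that is, to take $m \geq \sqrt{\mathrm{\Omega}/(4\sqrt{2\pi})}\,\sqrt{n/\delta}$; since $\mathrm{\Omega}$ is constant, this is $m = \mathrm{O}(\sqrt{n/\delta})$. I would finally reconcile this with the hypothesis $m \geq \sqrt{n}/2$ of Proposition \ref{prop05_01}: because $\rho - \underline{\rho}^{m}_{\textrm{CKP}} \leq \rho \leq 1$, only $\delta \leq 1$ is of interest, and then $\sqrt{n}/2 \leq \tfrac{1}{2}\sqrt{n/\delta} = \mathrm{O}(\sqrt{n/\delta})$, so choosing $m$ to be the least integer exceeding the larger of the two thresholds satisfies both requirements while preserving the order $\mathrm{O}(\sqrt{n/\delta})$. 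I do not expect a genuine obstacle here, as every step is a routine estimate; the only points meriting care are the square-root inequality $1 - \sqrt{1-t} \leq t$, which is precisely what produces the clean $n/m^{2}$ dependence, and the bookkeeping ensuring that the feasibility constraint $m \geq \sqrt{n}/2$ does not degrade the final asymptotic rate.
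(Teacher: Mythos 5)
Your proposal is correct, and it follows essentially the same route as the paper: the paper's proof of this corollary simply defers to the parallel argument of Proposition 3 in Han et al.\ \cite{Han16} with the error term $\sqrt{1-n/4m}$ replaced by $\sqrt{1-n/4m^{2}}$, and your computation (bounding $1-\sqrt{1-t}\leq t$, bounding the exponential factor by $1$, then solving $\mathrm{const}\cdot n/m^{2}\leq\delta$ for $m$ while checking compatibility with $m\geq\sqrt{n}/2$) is exactly that argument written out explicitly. The only difference is that yours is self-contained rather than by reference, which if anything makes the claim verifiable without consulting the cited work.
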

\begin{proof}
The proof is parallel to the proof of Proposition 3 in Han et al. \cite{Han16} with $\sqrt{1 - n/4m}$ replaced by $\sqrt{1 - n/4m^{2}}$. \qed  
\end{proof}

\noindent For example, Han et al. \cite{Han16} observed that the difference between $\underline{\rho}^{m}_{\textrm{CKP}}$ and $\rho$ is less than $0.01$ when $\rho=0.95$ and $m \geq 3n$. Our approximation scheme improves the results so that the difference is less than $0.01$ when $\rho=0.95$ and $m \geq \sqrt{3n} \simeq 1.74\sqrt{n}$ under the same assumptions.

Now, we consider the cases of distributionally robust BKPs. When the ambiguity set $\mathcal{D}_{a}$ is $\mathcal{D}_{a}^{1}$ with the first and second moment information available in \textbf{(DCKP)}, we can derive similar results on the probability bounds for constraint (\ref{eq01_02}).

\begin{proposition} \label{prop05_02}
Assume that $a$ is arbitrary random vector of independent random variables $a_{j}$, $j \in {N}$ in $\mathcal{D}_{a}^{1}$. Let $\underline{B}^{m}_{\textrm{DCKP}}$ be the set of feasible solutions to \textbf{(RKPm)} with $\mathrm{\Omega} = \sqrt{\rho/(1-\rho)}$ and $\mathrm{\Delta} = m^2$. Let $\underline{\rho}^{m}_{\textrm{DCKP}} := \min_{x \in \underline{B}^{m}_{\textrm{DCKP}}} \left\{ \inf_{a \in \mathcal{D}_{a}^{1}}  \textnormal{Prob} \left\{ a' x \leq b \right\} \right\}$.
Then, 
\begin{equation*} 
\rho - \underline{\rho}^{m}_{\textrm{DCKP}} \leq \frac{\rho(1-\rho)}{{4m^2}/{n} - \rho}, 
\end{equation*}
for ${m} \geq {\sqrt{n}/2}$. Moreover,
\begin{equation*}  
\lim_{m \to \infty} \underline{\rho}^{m}_{\textrm{DCKP}} \geq \rho.
\end{equation*}
\end{proposition}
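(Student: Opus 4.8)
The plan is to mirror the argument behind Proposition \ref{prop05_01}, replacing the Gaussian tail estimate with the one-tailed Chebyshev (Cantelli) bound that underlies the reformulation (\ref{eq02_02}). First I would record what membership in $\underline{B}^m_{\textrm{DCKP}}$ buys us geometrically. Since $\mathrm{\Delta}=m^2$, this set is exactly the robust-feasible set for the inner approximation $\mathcal{U}_{\textrm{in}}^m$, and Proposition \ref{prop03_01} gives $\mathcal{U}_{\bar{\gamma}_m}\subseteq\mathcal{U}_{\textrm{in}}^m$ with $\bar{\gamma}_m\geq\mathrm{\Omega}\sqrt{1-n/4m^2}$ whenever $m\geq\sqrt{n}/2$. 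Hence every $x\in\underline{B}^m_{\textrm{DCKP}}$ also satisfies the robust constraint over the ellipsoid $\mathcal{U}_{\mathrm{\Omega}_m}$ with $\mathrm{\Omega}_m:=\mathrm{\Omega}\sqrt{1-n/4m^2}$, which by the ellipsoidal-robust/SOC equivalence behind (\ref{eq03_00})--(\ref{eq02_06}) is the inequality
\[
\hat{a}'x+\mathrm{\Omega}_m\,\|\mathrm{\Sigma}^{1/2}x\|\leq b.
\]
Writing $\mu(x):=\hat{a}'x$ and $s(x):=\|\mathrm{\Sigma}^{1/2}x\|$, this reads $b-\mu(x)\geq\mathrm{\Omega}_m\,s(x)\geq 0$.

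Next I would bound the worst-case probability directly. For any $a\in\mathcal{D}_a^{1}$ the scalar $a'x$ has mean $\mu(x)$ and variance $s(x)^2$ (by independence and $x_j^2=x_j$), so Cantelli's inequality yields $\textnormal{Prob}\{a'x\leq b\}\geq (b-\mu(x))^2/(s(x)^2+(b-\mu(x))^2)$. Since $\tau\mapsto\tau^2/(s^2+\tau^2)$ is increasing for $\tau\geq 0$, substituting the lower bound $b-\mu(x)\geq\mathrm{\Omega}_m\,s(x)$ gives $\textnormal{Prob}\{a'x\leq b\}\geq\mathrm{\Omega}_m^2/(1+\mathrm{\Omega}_m^2)$ uniformly over $a\in\mathcal{D}_a^{1}$ and $x\in\underline{B}^m_{\textrm{DCKP}}$. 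Taking the infimum over $a$ and then the minimum over $x$ gives $\underline{\rho}^m_{\textrm{DCKP}}\geq\mathrm{\Omega}_m^2/(1+\mathrm{\Omega}_m^2)$.

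Finally I would substitute $\mathrm{\Omega}^2=\rho/(1-\rho)$, so that $\mathrm{\Omega}_m^2=\tfrac{\rho}{1-\rho}\bigl(1-n/4m^2\bigr)$, and carry out the algebra for $\rho-\mathrm{\Omega}_m^2/(1+\mathrm{\Omega}_m^2)$: the factors of $1/(1-\rho)$ cancel, the denominator reduces to $1-\rho n/4m^2$, and the numerator collapses to $\rho(1-\rho)n/4m^2$, which after multiplying through by $4m^2/n$ is exactly $\rho(1-\rho)/(4m^2/n-\rho)$; note $4m^2/n-\rho>0$ for $m\geq\sqrt{n}/2$ since $\rho<1$. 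The limit claim follows because this bound tends to $0$ as $m\to\infty$, forcing $\liminf_{m\to\infty}\underline{\rho}^m_{\textrm{DCKP}}\geq\rho$.

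I do not expect a genuine obstacle. The only points needing care are confirming that the monotonicity substitution goes in the feasibility-preserving direction (we use the lower bound on $b-\mu(x)$, which is precisely what the \emph{inner} approximation supplies) and keeping the inequality orientations consistent when passing from the Cantelli tail bound to the survival probability. The conceptual content sits entirely in the first step, namely transferring inner-approximation feasibility into a genuine SOC inequality via Proposition \ref{prop03_01}; the probability estimate and the closing algebra are routine.
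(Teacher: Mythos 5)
Your proof is correct and takes essentially the same route as the paper's: both use Proposition \ref{prop03_01} to transfer feasibility for the inner approximation into the SOC inequality with shrunken radius $\mathrm{\Omega}\sqrt{1-n/4m^{2}}$, bound the worst-case probability below by $\mathrm{\Omega}_m^{2}/(1+\mathrm{\Omega}_m^{2})$, and close with the identical algebraic simplification to $\rho(1-\rho)/(4m^{2}/n-\rho)$. The only cosmetic difference is that you re-derive the probability bound directly from Cantelli's inequality, whereas the paper invokes the equivalence behind constraint (\ref{eq02_02}) between ellipsoidal robust feasibility and the distributionally robust chance constraint, which rests on the same inequality.
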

\begin{proof}
By Proposition \ref{prop03_01}, an ellipsoidal uncertainty set $\mathcal{U}_{\mathrm{\Omega} \sqrt{1-{n}/{4m^2}}}$ is a subset of the uncertainty set $\mathcal{U}_{\textrm{in}}^{m}$, where $\mathrm{\Omega}=\sqrt{\rho/(1-\rho)}$. Then, $x \in \underline{B}^{m}_{\textrm{DCKP}}$ satisfies $\sum_{j \in {N}} a_{j} x_{j} \leq b$ for every $ a \in \mathcal{U}_{\mathrm{\Omega} \sqrt{1-{n}/{4m^2}}}$. This is equivalent to the following chance constraint: 
\begin{equation*} 
\inf_{a \in \mathcal{D}_{a}^{1}} \textnormal{Prob} \left\{ a' x \leq b \right\} \geq q^{-1} \left( \mathrm{\Omega} \sqrt{1-\frac{n}{4m^2}} \right),
\end{equation*}
where $\omega^{-1}(\gamma) = 1-1/(1+\gamma^2)$, $\gamma \geq 0$ is an inverse function of $\omega(\rho)=\sqrt{\rho/(1-\rho)} = \mathrm{\Omega}$, $0 \leq \rho < 1$. Therefore, $\underline{\rho}^{m}_{\textrm{DCKP}} \geq \omega^{-1} ( \mathrm{\Omega}  \sqrt{1-{n}/{4m^2}} )$, and then
\begin{align*} 
\rho - \underline{\rho}^{m}_{\textrm{DCKP}} & \ \leq \ \rho - \omega^{-1} \left( \mathrm{\Omega} \sqrt{1-\frac{n}{4m^2}} \right) \\
& \ = \ \rho - \left( 1 - \frac{1}{1+\left( \sqrt{\rho/(1-\rho)} \sqrt{1-{n}/{4m^2}} \right)^2} \right) \\
& \ = \ \frac{1-\rho}{(1-\rho)+\rho(1-n/4m^2)} - (1-\rho) \\
& \ = \ \frac{1-(1-\rho(n/4m^2))}{1-\rho(n/4m^2)} (1-\rho) \\
& \ = \  \frac{\rho(1-\rho)}{4m^{2}/n-\rho}. \qed  
\end{align*} 
\end{proof}

We also obtain the minimum number of $m$ required to make feasible solutions to \textbf{(RKPm)} satisfy constraint (\ref{eq01_01}) with the guaranteed probability by the following corollary.

\begin{corollary} \label{cor05_02}
For fixed $\rho<1$, the minimum number of $m$ that makes $\rho - \underline{\rho}^{m}_{\textrm{DCKP}}$ less than $\delta > 0$ is expressed as
\begin{equation*}  
\underline{m} := \left\lceil \sqrt{ \left( \frac{\rho(1-\rho)}{4\delta} + \frac{\rho}{4} \right) n  } \ \right\rceil. 
\end{equation*}
Moreover, $\underline{m} = \mathrm{O} ( \sqrt{{n}/{\delta}} )$.
\end{corollary}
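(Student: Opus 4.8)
The plan is to read off $\underline{m}$ directly from the probability bound of Proposition \ref{prop05_02}, which is the only handle we have on the quantity $\rho - \underline{\rho}^{m}_{\textrm{DCKP}}$. By that proposition, for $m \geq \sqrt{n}/2$ we have $\rho - \underline{\rho}^{m}_{\textrm{DCKP}} \leq \rho(1-\rho)/(4m^2/n - \rho)$, so it suffices to identify the smallest integer $m$ for which this upper estimate falls below $\delta$. The first step is to record that the denominator $4m^2/n-\rho$ is strictly positive on the admissible range $m \geq \sqrt{n}/2$, since there $4m^2/n \geq 1 > \rho$; this permits cross-multiplication without reversing the inequality.

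Next I would impose $\rho(1-\rho)/(4m^2/n-\rho) \leq \delta$ and clear denominators to obtain $\rho(1-\rho) \leq \delta(4m^2/n - \rho)$. Collecting the $m^2$ term and dividing through by $4\delta/n$ isolates $m^2 \geq \left( \rho(1-\rho)/(4\delta) + \rho/4 \right) n$. Because the bound $\rho(1-\rho)/(4m^2/n-\rho)$ is monotone decreasing in $m$, this condition is equivalent to $m \geq \sqrt{(\rho(1-\rho)/(4\delta) + \rho/4)\,n}$, and the smallest admissible integer is obtained by rounding up, which reproduces exactly the claimed expression $\underline{m} = \lceil \sqrt{(\rho(1-\rho)/(4\delta) + \rho/4)\,n}\, \rceil$.

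For the asymptotic statement I would fix $\rho < 1$ and argue that, for $\delta \leq 1$, the bracketed coefficient is dominated by its $1/\delta$ term: $\rho(1-\rho)/(4\delta) + \rho/4 \leq \rho(2-\rho)/(4\delta)$. Hence $\underline{m} \leq \lceil \sqrt{\rho(2-\rho)\,n/(4\delta)}\, \rceil = \mathrm{O}(\sqrt{n/\delta})$, with the hidden constant absorbing the fixed $\rho$.

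No step here is genuinely difficult — once Proposition \ref{prop05_02} is in hand the argument is elementary algebra, and I do not expect a real obstacle. The two places that warrant care are confirming that the computed $\underline{m}$ actually lies in the regime $m \geq \sqrt{n}/2$ where the proposition's bound is valid (a short computation shows this holds precisely when $\delta \leq \rho$, which covers the small-tolerance regime of interest and in particular the $\delta \to 0$ limit underlying the $\mathrm{O}(\sqrt{n/\delta})$ claim), and handling the integrality cleanly through the ceiling. I would also remark that $\underline{m}$ is a \emph{sufficient} threshold certified by the upper bound of Proposition \ref{prop05_02}; since that bound need not be tight, the true minimum could in principle be smaller, but $\underline{m}$ is the value our analysis guarantees.
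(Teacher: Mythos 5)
Your proposal is correct and follows essentially the same route as the paper: the paper's proof consists precisely of setting the bound $\rho(1-\rho)/(4m^{2}/n-\rho)$ from Proposition \ref{prop05_02} to be at most $\delta$ and solving for $m$, which is the algebra you carried out. Your additional checks (positivity of the denominator, monotonicity, and the verification that $\underline{m}$ lies in the regime $m \geq \sqrt{n}/2$ where the proposition applies) are sound clarifications of steps the paper leaves implicit.
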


\begin{proof}
It is derived from 
\begin{equation*} 
\rho - \underline{\rho}^{m}_{\textrm{DCKP}} \leq \frac{\rho(1-\rho)}{4m^{2}/n-\rho} \leq \delta. \\
\end{equation*}
\qed
\end{proof}

\section{An exact algorithm for the second-order cone-constrained binary knapsack problem} \label{sect06}

Propositions \ref{prop05_01} and \ref{prop05_02} in section \ref{sect05} show that the value of the minimum probability satisfying the chance constraint for feasible solutions to \textbf{(RKPm)} with $\mathrm{\Delta} = m^{2}$ converges to a value greater than or equal to $\rho$ as $m \to \infty$. However, it does not guarantee that the optimal solution to \textbf{(RKPm)} with $\mathrm{\Delta} = m^{2}$ is feasible to \textbf{(SOCKP)} at some finite value of $m$. The following proposition shows the finiteness of $m$, which makes the optimal solution of \textbf{(RKPm)} with $\mathrm{\Delta}=m^{2}$ become feasible to \textbf{(SOCKP)}; thus, it is optimal to \textbf{(SOCKP)}.

\begin{proposition} \label{prop06_00}
An optimal solution to \textbf{(RKPm)} with $\mathrm{\Delta}=m^{2}$ is also an optimal solution to \textbf{(SOCKP)} if the value of $m$ is sufficiently large.  
\end{proposition}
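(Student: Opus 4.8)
The plan is to exploit the finiteness of $\{0,1\}^n$ together with a two-sided bound that sandwiches $\beta^m_{\textrm{in}}(x)$ between two scaled copies of the second-order cone term, so that for all sufficiently large $m$ the feasible region of \textbf{(RKPm)} with $\mathrm{\Delta}=m^2$ coincides \emph{exactly} with that of \textbf{(SOCKP)}. First I would record the sandwich. Since $\mathcal{U}_{\textrm{in}}^m \subseteq \mathcal{U} = \mathcal{U}_{\mathrm{\Omega}}$ and, by Proposition \ref{prop03_01}, $\mathcal{U}_{\gamma_m} \subseteq \mathcal{U}_{\textrm{in}}^m$ for $\gamma_m := \mathrm{\Omega}\sqrt{1 - n/4m^2}$ whenever $m \ge \sqrt{n}/2$, the fact that $\beta^m_{\textrm{in}}(x)$ maximizes $(\mathrm{\Sigma}^{1/2}x)'\xi$ over the associated set, and that the robust value of an ellipsoid $\mathcal{U}_{\gamma}$ is exactly $\gamma\,\|\mathrm{\Sigma}^{1/2}x\|$, gives
\[
\gamma_m \sqrt{\textstyle\sum_{j \in N}\sigma_j^2 x_j^2} \ \le\ \beta^m_{\textrm{in}}(x) \ \le\ \mathrm{\Omega}\sqrt{\textstyle\sum_{j \in N}\sigma_j^2 x_j^2}, \qquad x \in \{0,1\}^n .
\]

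Next I would split the cube into the set $F$ of points feasible to \textbf{(SOCKP)} and its complement $I$, and let $B^m$ denote the feasible set of \textbf{(RKPm)} with $\mathrm{\Delta}=m^2$. Every $x \in F$ already lies in $B^m$ (this is precisely the upper-bound observation following (\ref{eq03_05}), since $\mathcal{U}_{\textrm{in}}^m \subseteq \mathcal{U}$), so $F \subseteq B^m$ for all $m$; it remains to push every $x \in I$ out of $B^m$. For $x \in I$ set $\epsilon_x := \sum_{j \in N} \hat{a}_j x_j + \mathrm{\Omega}\sqrt{\sum_{j \in N}\sigma_j^2 x_j^2} - b > 0$, and put $\epsilon := \min_{x \in I}\epsilon_x > 0$, which is well defined because $I$ is finite (if $I=\emptyset$ the claim is immediate). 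Writing $\bar{\sigma} := \sqrt{\sum_{j \in N}\sigma_j^2}$, the lower half of the sandwich yields, for every $x \in I$,
\[
\sum_{j \in N} \hat{a}_j x_j + \beta^m_{\textrm{in}}(x) \ \ge\ b + \epsilon_x - (\mathrm{\Omega}-\gamma_m)\sqrt{\textstyle\sum_{j \in N}\sigma_j^2 x_j^2} \ \ge\ b + \epsilon - (\mathrm{\Omega}-\gamma_m)\,\bar{\sigma}.
\]
Since $\mathrm{\Omega}-\gamma_m = \mathrm{\Omega}\bigl(1-\sqrt{1-n/4m^2}\bigr) \le \mathrm{\Omega}\, n/4m^2 \to 0$, any finite $m > \sqrt{\mathrm{\Omega}\,\bar{\sigma}\, n/(4\epsilon)}$ (and $m \ge \sqrt{n}/2$) forces $(\mathrm{\Omega}-\gamma_m)\bar{\sigma} < \epsilon$, so the left-hand side strictly exceeds $b$; hence no $x \in I$ satisfies the constraint of \textbf{(RKPm)}, i.e. $B^m \cap I = \emptyset$.

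Combining the two inclusions gives $B^m = F$ for all such $m$. As \textbf{(RKPm)} and \textbf{(SOCKP)} share the objective $\sum_{j \in N} p_j x_j$ and, for these $m$, the same feasible set, every optimal solution to \textbf{(RKPm)} with $\mathrm{\Delta}=m^2$ is optimal to \textbf{(SOCKP)}. I expect the only genuine obstacle to be the first step: correctly turning the set inclusion $\mathcal{U}_{\gamma_m} \subseteq \mathcal{U}_{\textrm{in}}^m$ of Proposition \ref{prop03_01} into the pointwise lower bound $\beta^m_{\textrm{in}}(x) \ge \gamma_m\|\mathrm{\Sigma}^{1/2}x\|$. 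Once that sandwich is secured, finiteness of the cube supplies the uniform positive gap $\epsilon$ and the convergence $\gamma_m \to \mathrm{\Omega}$ closes the argument, while the displayed inequality even yields an explicit (though instance-dependent, through $\epsilon$) threshold of order $\sqrt{\bar{\sigma}\,n/\epsilon}$.
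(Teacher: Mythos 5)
Your proof is correct and follows essentially the same route as the paper's: both arguments use Proposition \ref{prop03_01} to lower-bound the inner approximation by the ellipsoid $\mathcal{U}_{\gamma_m}$ with $\gamma_m = \mathrm{\Omega}\sqrt{1-n/4m^2} \to \mathrm{\Omega}$, combine this with $\mathcal{U}_{\textrm{in}}^{m} \subseteq \mathcal{U}$ to keep all \textbf{(SOCKP)}-feasible points feasible, and invoke finiteness of $\{0,1\}^{n}$ to cut off every \textbf{(SOCKP)}-infeasible point once $m$ is large enough. If anything, your uniform minimum violation $\epsilon$ over the finite infeasible set makes explicit a step the paper leaves implicit (its threshold $m'$ is chosen per infeasible point, and the passage to a single $m$ handling all of them is not spelled out), and your explicit threshold of order $\sqrt{\mathrm{\Omega}\,\bar{\sigma}\,n/\epsilon}$ prefigures the quantitative bound of Proposition \ref{prop06_01}.
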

\begin{proof}
Let $x^{*} \in  \{0,1\}^{n}$ be an infeasible solution to \textbf{(SOCKP)}. Then, $\mathrm{\Omega} > (b-\hat{a}'x^{*}) / ||\mathrm{\Sigma}^{1/2} x^{*} ||$ and $x^{*}$ is also infeasible to the robust BKP with uncertainty set $\mathcal{U}_{\mathrm{\Omega}}$. We then choose a value $\mathrm{\Omega}' = \mathrm{\Omega} \sqrt{1-n/4m^{'2}}$ with an integer $m'$ value and satisfying $\mathrm{\Omega} > \mathrm{\Omega}' > (b-\hat{a}'x^{*}) / ||\mathrm{\Sigma}^{1/2} x^{*} ||$. Then, $x^{*}$ is also infeasible to the robust BKP with uncertainty set $\mathcal{U}_{\mathrm{\Omega'}}$, which is equivalent to \textbf{(SOCKP)} whose coefficient $\mathrm{\Omega}$ is replaced by $\mathrm{\Omega}'$.

By Proposition \ref{prop03_01}, $m = m'$ makes $x^{*}$ infeasible to \textbf{(RKPm)} with $\mathrm{\Delta}=m^{2}$ because $\{ x \in  \{0,1\}^{n} \ | \ \sum_{j \in {N}} a_{j} x_{j} \leq b, \forall a \in \mathcal{U}_{in}^{m'} \}$ is a safe approximation of $\{ x \in \{0,1\}^{n} \ | \  \sum_{j \in {N}} a_{j} x_{j} \leq b, \forall a \in \mathcal{U}_{\mathrm{\Omega}'} \}$ i.e. $\mathcal{U}_{\mathrm{\Omega}'} \subseteq \mathcal{U}_{in}^{m'} \subseteq \mathcal{U}_{\mathrm{\Omega}}$. Therefore, every infeasible solution of \textbf{(SOCKP)} is also infeasible to \textbf{(RKPm)} with $\mathrm{\Delta}=m^{2}$ when the value of $m$ is greater than or equal to $m'$. This shows that an optimal solution to \textbf{(RKPm)} with $\mathrm{\Delta}=m^{2}$ is a feasible solution; thus it is an optimal solution to \textbf{(SOCKP)} when the value of $m$ is sufficiently large.  \qed 
\end{proof}

Tentatively, we assume that $\hat{a}_{j}$ and $\sigma_{j}$, $j \in {N}$ and $\mathrm{\Omega}$ are also integer coefficients as $p_{j}$, $j \in {N}$ and $b$. Then, we can obtain the value of $m$, which makes the optimal solution to \textbf{(RKPm)} with $\mathrm{\Delta} = m^{2}$ become optimal to \textbf{(SOCKP)}.

\begin{proposition} \label{prop06_01}
We assume that every coefficient for \textbf{(SOCKP)} is integer-valued. An optimal solution to \textbf{(RKPm)} with $\mathrm{\Delta}=m^{2}$ is also an optimal solution to \textbf{(SOCKP)} if the value of $m$ is at least $m^{*} := \left\lceil \mathrm{\Omega}/2 \cdot \sqrt{n\sum_{j \in {N}} \sigma_{j}^{2}} \right\rceil+1$.   
\end{proposition}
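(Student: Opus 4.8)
The plan is to prove the contrapositive: I will show that every $x \in \{0,1\}^n$ that is \emph{infeasible} to \textbf{(SOCKP)} is already infeasible to \textbf{(RKPm)} with $\mathrm{\Delta}=m^2$ as soon as $m \geq m^*$. Because $\mathcal{U}_{\textrm{in}}^m \subseteq \mathcal{U}$, the problem \textbf{(RKPm)} with $\mathrm{\Delta}=m^2$ is a relaxation of \textbf{(SOCKP)}: its feasible set contains that of \textbf{(SOCKP)} and its optimal value is an upper bound. So once no \textbf{(SOCKP)}-infeasible point survives in the feasible set of \textbf{(RKPm)}, the optimal solution to \textbf{(RKPm)} must be feasible, hence optimal, for \textbf{(SOCKP)}, exactly as in Proposition \ref{prop06_00}. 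The new content is to convert that asymptotic argument into the explicit threshold $m^*$ by exploiting integrality.

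First I would reduce infeasibility of \textbf{(RKPm)} to a violated second-order cone constraint with a shrunken coefficient. By Proposition \ref{prop03_01}, $\mathcal{U}_{\gamma_m} \subseteq \mathcal{U}_{\textrm{in}}^m$ with $\gamma_m = \mathrm{\Omega}\sqrt{1-n/4m^2}$, so the feasible set of \textbf{(RKPm)} is contained in $\{x : \sum_{j} \hat{a}_j x_j + \gamma_m \sqrt{\sum_j \sigma_j^2 x_j^2} \leq b\}$. Hence it suffices to show that any $x$ infeasible for \textbf{(SOCKP)} satisfies $\gamma_m\sqrt{\sum_j \sigma_j^2 x_j^2} > b - \sum_j \hat{a}_j x_j$. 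Writing $c := b - \sum_j \hat{a}_j x_j$ and $S := \sum_j \sigma_j^2 x_j^2 = \sum_j \sigma_j^2 x_j$ (using $x_j \in \{0,1\}$), both are integers under the integrality assumption; if $c<0$ the claim is trivial, so I assume $c \geq 0$.

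The crux is to extract a \emph{discrete} gap. Infeasibility of \textbf{(SOCKP)} reads $\mathrm{\Omega}\sqrt{S} > c$, and squaring the two nonnegative integers $\mathrm{\Omega}^2 S$ and $c^2$ yields the integer jump $\mathrm{\Omega}^2 S \geq c^2 + 1$. This is the step that makes the whole argument work, and it is the main obstacle: without integrality the quantity $\mathrm{\Omega}\sqrt{S}-c$ can be arbitrarily small and no finite $m$ would suffice. I would then reduce $\gamma_m^2 S > c^2$, via $\gamma_m^2 = \mathrm{\Omega}^2(1 - n/4m^2)$ together with the bound $\mathrm{\Omega}^2 S \geq c^2+1$, to the sufficient condition $4m^2 > (c^2+1)n$. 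To remove the dependence on the particular $x$, I bound $c$ uniformly: from $c < \mathrm{\Omega}\sqrt{S} \leq \mathrm{\Omega}\sqrt{\sum_j \sigma_j^2}$ and integrality one gets $c^2 + 1 \leq \mathrm{\Omega}^2 \sum_j \sigma_j^2$.

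Combining the last two estimates, the condition $4m^2 > (c^2+1)n$ is implied by $4m^2 > \mathrm{\Omega}^2 n \sum_j \sigma_j^2$, i.e. $m > \tfrac{\mathrm{\Omega}}{2}\sqrt{n\sum_j\sigma_j^2}$, which holds for every $m \geq m^*$ by the definition of $m^*$ (the $+1$ inside the ceiling secures the \emph{strict} inequality even when the radicand happens to be an integer). I would also note that $m^* \geq \sqrt{n}/2$, so Proposition \ref{prop03_01} indeed applies; this is immediate in the nondegenerate case $\mathrm{\Omega}\geq 1$, $\sum_j\sigma_j^2\geq 1$ (if $\mathrm{\Omega}=0$ or all $\sigma_j=0$ the cone constraint is linear and the statement is trivial). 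The resulting $\gamma_m^2 S > c^2$ gives $\gamma_m\sqrt{S} > c$, so $x$ violates the shrunken cone constraint and is infeasible to \textbf{(RKPm)}, completing the contrapositive and hence the proposition.
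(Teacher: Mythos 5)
Your proposal is correct and takes essentially the same route as the paper's proof: both arguments cut off every \textbf{(SOCKP)}-infeasible point by showing it violates the shrunken cone constraint with coefficient $\gamma_m = \mathrm{\Omega}\sqrt{1-n/4m^2}$ supplied by Proposition \ref{prop03_01}, both extract the unit integer gap $\mathrm{\Omega}^2 \sum_j \sigma_j^2 x_j - (b-\hat{a}'x)^2 \geq 1$ from the integrality assumption, and both land on the same sufficient condition $4m^2 > \mathrm{\Omega}^2 n \sum_{j \in N}\sigma_j^2$. The differences are only cosmetic: a slight reordering of the inequalities, and your explicit handling of the cases $b-\hat{a}'x<0$, $\mathrm{\Omega}=0$, all $\sigma_j=0$, and $m^* \geq \sqrt{n}/2$, which the paper leaves implicit.
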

\begin{proof}
Let $x^{*} \in  \{0,1\}^{n}$ be an infeasible solution to \textbf{(SOCKP)} which has the greatest positive value of $(b-\hat{a}'x^{*}) / ||\mathrm{\Sigma}^{1/2} x^{*} || < \mathrm{\Omega}$. For $m \geq m^*$, the following inequalities are satisfied: $\mathrm{\Omega}^{2} \cdot n/4m^2 <  1/(\sum_{j \in {N} } \sigma_{j}^{2}) \leq   1/||\mathrm{\Sigma}^{1/2}  x^{*} ||^2 =  1/(\sum_{j \in {N} } \sigma_{j}^{2} (x_{j}^{*})^{2})$, where the first inequality is derived from $m / \sqrt{n} > \mathrm{\Omega}/2 \cdot \sqrt{ \sum_{j \in {N}} \sigma_{j}^{2} }$. 

Because $\mathrm{\Omega}^{2} - (b-\hat{a}'x^{*})^{2}  / ||\mathrm{\Sigma}^{1/2} x^{*} ||^{2}$ has a positive value and all coefficients are integer, $\mathrm{\Omega}^{2} - (b-\hat{a}'x^{*})^{2} / ||\mathrm{\Sigma}^{1/2} x^{*} ||^{2}$ is at least $1 / ||\mathrm{\Sigma}^{1/2} x^{*} ||^{2}$. Therefore, $\mathrm{\Omega}^{2} (1-n/4m^2) > \mathrm{\Omega}^{2} - 1/||\mathrm{\Sigma}^{1/2}  x^{*} ||^2 \geq (b-\hat{a}'x^{*})^{2}/||\mathrm{\Sigma}^{1/2} x^{*} ||^{2}$, and then $\mathrm{\Omega} \sqrt{1-n/4m^2}  \cdot ||\mathrm{\Sigma}^{1/2} x^{*} || > b-\hat{a}'x^{*}$. 

We can see that $x^{*}$ is also infeasible solution to \textbf{(SOCKP)} whose coefficient $\mathrm{\Omega}$ is replaced by $\mathrm{\Omega}^{*} := \mathrm{\Omega} \sqrt{1-n/4m^2}$, which is equivalent to the robust BKP with uncertainty set $\mathcal{U}_{\mathrm{\Omega}^{*}}$. By Proposition \ref{prop03_01}, $x^{*}$ is infeasible to \textbf{(RKPm)} when $m \geq m^{*}$ because $\mathcal{U}_{\mathrm{\Omega}^{*}} \subseteq \mathcal{U}_{in}^{m^{*}} \subseteq \mathcal{U}_{\mathrm{\Omega}}$. Therefore, a feasible solution to \textbf{(RKPm)} with $\mathrm{\Delta}=(m^{*})^{2}$ is feasible to \textbf{(SOCKP)}. Because \textbf{(RKPm)} with $\mathrm{\Delta}=(m^{*})^{2}$ and \textbf{(SOCKP)} have the same objective function, an optimal solution to \textbf{(RKPm)} with $\mathrm{\Delta}=(m^{*})^{2}$ is optimal to \textbf{(SOCKP)}. \qed
\end{proof}

From Proposition \ref{prop06_01}, a pseudo-polynomial time algorithm for \textbf{(SOCKP)} can be derived. 

\begin{theorem} \label{thm06_01}
There is a pseudo-polynomial time algorithm to obtain an exact optimal solution to \textbf{(SOCKP)} when the algorithm uses a pseudo-polynomial time algorithm as a subroutine for the ordinary BKP.   
\end{theorem}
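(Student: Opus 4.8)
The plan is to convert Proposition \ref{prop06_01} into an explicit procedure and then bound its running time. Since every coefficient of \textbf{(SOCKP)} is integer-valued, Proposition \ref{prop06_01} guarantees that it suffices to fix $m = m^{*} = \lceil \mathrm{\Omega}/2 \cdot \sqrt{n \sum_{j \in {N}} \sigma_{j}^{2}} \rceil + 1$ and solve \textbf{(RKPm)} with $\mathrm{\Delta} = m^{2}$: any optimal solution of that problem is optimal for \textbf{(SOCKP)}. By Theorem \ref{thm04_01}, solving \textbf{(RKPm)} reduces to solving the $\mathrm{O}(nm)$ ordinary binary knapsack problems with feasible sets ${B}_{l}$, $l \in \{\hat{l},\dots,nm-1,nm+1\}$, and returning the best objective value among them. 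So the algorithm I would run is simply: set $m=m^{*}$, build these $\mathrm{O}(nm)$ knapsack instances, call the pseudo-polynomial BKP subroutine on each, and report the best solution; correctness then follows at once from Theorem \ref{thm04_01} and Proposition \ref{prop06_01}.

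The main obstacle is that the instances defined by ${B}_{l}$ do not have integer data: the weights $d_{l'}-\tfrac{d_{l}}{f_{l}}f_{l'}$ and the capacity $b-\tfrac{d_{l}}{f_{l}}\mathrm{\Delta}$ both carry the factor $d_{l}/f_{l}$, and $d_{l}=\mathrm{\Omega}\sigma_{j(l)}/m$ is only rational. To restore integrality I would multiply the defining inequality of ${B}_{l}$ by the positive integer $m f_{l} = m(2k(l)-1)$, which leaves the $0/1$ solution set ${B}_{l}$ unchanged while clearing all denominators. Using $d_{l'}=\mathrm{\Omega}\sigma_{j(l')}/m$ and $f_{l'}=2k(l')-1$, the scaled item weight for $j$ becomes $mf_{l}\hat{a}_{j} + \sum_{l'}(f_{l}\mathrm{\Omega}\sigma_{j(l')} - \mathrm{\Omega}\sigma_{j(l)}f_{l'})$ and the scaled capacity (with $\mathrm{\Delta}=m^{2}$) becomes $mf_{l}\,b - \mathrm{\Omega}\sigma_{j(l)}m^{2}$, all integers because $m$, $f_{l}$, $\mathrm{\Omega}$, $\hat{a}_{j}$ and the $\sigma_{j}$ are. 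The weights remain nonnegative, since every $l' \in {L}_{l}$ obeys $d_{l'}/f_{l'} > d_{l}/f_{l}$ and hence $d_{l'}-\tfrac{d_{l}}{f_{l}}f_{l'} > 0$; when the scaled capacity is negative the instance is infeasible and contributes nothing. Each ${B}_{l}$ is therefore a genuine integer-data BKP on which the assumed subroutine can be invoked.

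It remains to check that every quantity stays pseudo-polynomial. Writing $\sigma_{\max}=\max_{j}\sigma_{j}$ and $\hat{a}_{\max}=\max_{j}\hat{a}_{j}$, the bound $\sum_{j}\sigma_{j}^{2}\le n\sigma_{\max}^{2}$ gives $m^{*} = \mathrm{O}(\mathrm{\Omega}\, n\, \sigma_{\max})$, which is polynomial in $n$ and in the numerical magnitudes of the data. The number of knapsack instances is then $\mathrm{O}(nm^{*})$, and the scaling factor $mf_{l}=\mathrm{O}(m^{2})$ inflates each capacity only to $\mathrm{O}\!\big(m^{2}(b+\mathrm{\Omega}\sigma_{\max})\big)$ and each weight to $\mathrm{O}\!\big(m^{2}(\hat{a}_{\max}+\mathrm{\Omega}\sigma_{\max})\big)$, both bounded by a polynomial in $n$ and in $b,\mathrm{\Omega},\hat{a}_{\max},\sigma_{\max}$. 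Since the subroutine runs in time polynomial in $n$ and in the numerical magnitudes of its integer instance, each of the $\mathrm{O}(nm^{*})$ calls takes pseudo-polynomial time, so the total is pseudo-polynomial as well. The crux of the argument is exactly the scaling step of the second paragraph, where one must simultaneously verify integrality, nonnegativity, and that the blow-up in the instance size is only by the pseudo-polynomial factor $\mathrm{O}(m^{2})$; once that is secured, combining it with the correctness from Proposition \ref{prop06_01} yields the claimed algorithm.
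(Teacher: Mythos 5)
Your proposal is correct, and it follows the same skeleton as the paper's argument---fix $m = m^{*}$ from Proposition \ref{prop06_01}, decompose \textbf{(RKPm)} via Theorem \ref{thm04_01} into $\mathrm{O}(nm^{*})$ ordinary knapsack problems, solve each with the subroutine, and return the best solution---but it resolves the one real technical obstacle by a genuinely different means. The paper never rescales the instances $B_{l}$: it observes that although the weights $d_{l'} - (d_{l}/f_{l})f_{l'}$ and capacities $b - (d_{l}/f_{l})\mathrm{\Delta}$ are rational, the profits $p_{j}$ remain integers, so DP-profit (a dynamic program over profit values, which tolerates non-integer weights and capacity and runs in $\mathrm{O}(nU)$ time, with $U$ an upper bound on the optimal value) can be applied directly, giving the stated total of $\mathrm{O}(m^{*}n^{2}U)$. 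You instead clear denominators by multiplying each defining inequality by $mf_{l}$, and you verify the three things that make this work: the scaled data are integers, the weights stay nonnegative (since $d_{l'}/f_{l'} > d_{l}/f_{l}$ for $l' \in L_{l}$), and the inflation factor is only $\mathrm{O}(m^{2})$, so the scaled magnitudes remain pseudo-polynomial in the original data. What your route buys is generality and self-containedness: the theorem's hypothesis of ``a pseudo-polynomial algorithm for the ordinary BKP'' is then satisfied by any such solver, including the standard weight-space DP, rather than requiring a profit-space DP that accepts non-integer weights. What the paper's route buys is a cleaner and tighter running-time bound, since it avoids the $\mathrm{O}(m^{2})$ blow-up in weights and capacity that your scaling introduces; both totals are pseudo-polynomial, which is all the theorem asserts. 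One small loose end in your write-up: the scaling factor $mf_{l} = m(2k(l)-1)$ is undefined for the dummy index $l = nm+1$ (there $f_{nm+1} = \mathrm{\Delta}$, and $k(l)$ does not exist), but that instance needs no scaling at all, because its item weights $\hat{a}_{j} + \mathrm{\Omega}\sigma_{j}$ and capacity $b$ are already integers under the standing integrality assumption, so this does not affect correctness.
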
                

DP-profit \cite{Kellerer04} is a pseudo-polynomial time algorithm for the ordinary BKPs whose coefficients of weights and capacity are non-integer, while objective coefficients are integer-valued. It can be used to solve the ordinary BKPs whose solution set is $B_{l}$ for each $l \in {L}$. Therefore, Theorem \ref{thm06_01} shows the existence of a pseudo-polynomial time algorithm for \textbf{(SOCKP)}. The time complexity of DP-profit is $\mathrm{O}(nU)$, where $U$ is an upper bound on the optimal value of the ordinary BKP. Therefore, the computational time required to evaluate the exact solution to \textbf{(SOCKP)} is at most $\mathrm{O}(m^{*}n^{2}U) = \mathrm{O}\left( \mathrm{\Omega} \sqrt{\sum_{j \in {N}} \sigma_{j}^{2}} n^{2.5} U \right)$, assuming the integrality of coefficients $\hat{a}_{j}$ and $\sigma_{j}$, $j \in N$ and $\Omega$.

In practical applications, coefficients can have non-integer values. In particular, when parameter $\mathrm{\Omega}$ is equal to $\mathrm{\Phi}^{-1}(\rho)$, $0.5 \leq \rho < 1$ or $\sqrt{\rho/(1-\rho)}$, $0 \leq \rho<1$ and the means and standard deviations are obtained by sampling, these coefficients may not be integers. However, we can extend the previous results to the case where these coefficients are finite decimals without loss of generality. The coefficients are scaled to integers without affecting the solution set of \textbf{(SOCKP)} by multiplying both sides of the constraint by $10^{s}$, where $s$ is the minimum value that  makes all $10^{s}\hat{a}_{1},\cdots,10^{s}\hat{a}_{n}$ and $10^{s}\mathrm{\Omega}\hat{\sigma}_{1},\cdots,10^{s}\mathrm{\Omega}\hat{\sigma}_{n}$ be integers. We note that $s$ is less than or equal to the maximum length of the fractional parts of $\hat{a}_{1}, \cdots, \hat{a}_{n}, \mathrm{\Omega}\hat{\sigma}_{1}, \cdots, \mathrm{\Omega}\hat{\sigma}_{n}$. Therefore, Theorem \ref{thm06_01} can also be applied when the coefficients of the problem are finite decimals.

Although the value of $m^{*}$ is finite, it can be very large in practice. It is possible that an optimal solution to \textbf{(SOCKP)} can be obtained for a value of $m$ that is much smaller than $m^{*}$. Hence, instead of using $m^{*}$ directly, we can start with a small value of $m$ and increase it iteratively until an optimal solution to \textbf{(RKPm)} with $\mathrm{\Delta}=m^{2}$ becomes feasible to \textbf{(SOCKP)}. This approach is summarized as Algorithm \ref{algo06_01} for \textbf{(SOCKP)}:

\begin{algorithm} [H]
\caption{Exact algorithm for \textbf{(SOCKP)}}
\label{algo06_01}
\begin{algorithmic}[1]
\STATE $\hat{m}=\sqrt{n}/2$, $\textrm{itr}=0$, $x^{*}=\{1,\cdots,1\}$, ${z}^{*}=c'x^{*}$.
\WHILE{$\hat{a}'x^{*} + \mathrm{\Omega} ||\mathrm{\Sigma}^{1/2} x^{*} || > b$}
\STATE $\hat{m} \leftarrow 2 \times \hat{m}$, $\textrm{itr} \leftarrow \textrm{itr}+1$. 
\STATE Solve \textbf{(RKPm)} with $m = \hat{m}$.
\STATE $x^{*} \leftarrow \textrm{optimal solution}$, ${z}^{*} \leftarrow \textrm{optimal value}$.
\ENDWHILE
\RETURN Exact optimal solution $x^{*}$ to \textbf{(SOCKP)} and its optimal value ${z}^{*}$.
\end{algorithmic}
\end{algorithm}

By Proposition \ref{prop06_01}, Algorithm \ref{algo06_01} is guaranteed to stop before $\hat{m}$ exceeds $m^{*}$ value. We can also see that Algorithm \ref{algo06_01} is a pseudo-polynomial time algorithm when the algorithm uses a pseudo-polynomial time algorithm for the ordinary BKPs.  Algorithm \ref{algo06_01} can be terminated before $r$-th iteration, where $r$ is the smallest integer satisfying $2^{r-1}\sqrt{n} > m^{*}$. Then, the algorithm can obtain an exact optimal solution to \textbf{(SOCKP)} in pseudo-polynomial time complexity $\mathrm{O}((2^{r}-1) n^{1.5} T) = \mathrm{O}(m^{*} n T)$, where $\mathrm{O}(T)$ is the time complexity of the algorithm for the ordinary BKPs. 

\section{Computational results} \label{sect07}

In this section, we present the computational results of our approximate and exact algorithms for the SOC-constrained BKP. Computational results of solving the mixed-integer quadratically constrained reformulation of \textbf{(SOCKP)} using a commercial optimization software ILOG CPLEX 12.9 are also provided for comparison with our algorithms. Moreover, we compare the quality of the upper bound on the optimal value of \textbf{(SOCKP)} obtained by our approach with the upper bound obtained by the approach used in Han et al. \cite{Han16}. All experiments were performed on an Intel\textcircled{R} Core$^{\mathrm{TM}}$ i5-4670 CPU $@$ 3.40GHz PC with 24GB RAM.

We implemented the algorithms with C++ programming language using Microsoft visual studio 2015 and the Combo algorithm developed by Martello et al. \cite{Martello99} to solve the ordinary BKP. Han et al. \cite{Han16} already observed that the Minknap algorithm developed by Pisinger \cite{Pisinger97} performed better than DP-Profits algorithm \cite{Kellerer04} and the MT1R algorithm \cite{Martello90} in their computational study. The Combo algorithm combines the Minknap algorithm with procedures of generating valid cutting planes and integrating them into the original capacity constraint by surrogate relaxation to improve its performance for large and difficult instances. In the implementation, other parameter values were set as in Martello et al. \cite{Martello99}, but the maximum number of states was set to $10^5$ in consideration of the type and size of the problems. The Combo algorithm requires every parameter to be an integer, but the capacity and weights of ordinary BKPs can be non-integer values in our algorithm. We multiply these parameters by $10^6$, and then round up the weights and round down the capacity. We also used CPLEX with $10^{-6}$ relative MIP gap tolerance in our experiments. 

We generated five types of instances based on the methods of Pisinger \cite{Pisinger97} and Monaci et al. \cite{Monaci13}. Let $\textrm{Unif} [a,b]$ and $\textrm{Unif} \{a,b\}$ denote a continuous uniform random variable in the interval $[a,b]$ and a discrete uniform random variable in a set $\{a, a+1, \cdots, b\}$, respectively. 

First, we generated the following three types of instances:
\begin{enumerate}
\item[-] Strongly correlated instances (SC): each weight $\hat{a}_j$ was randomly generated from $\textrm{Unif} \{1,100\}$ and $p_{j} = \hat{a}_j + 10$.
\item[-] Inverse strongly correlated instances (IC): each profit $p_{j}$ was randomly generated from $\textrm{Unif} \{1,100\}$ and $\hat{a}_{j} = \min \{100, p_{j}+10 \}$.
\item[-] Subset sum instances (SS): each weight $\hat{a}_j$ was randomly generated from $\textrm{Unif} \{1,100\}$ and $p_{j} = \hat{a}_j$.
\end{enumerate}
The standard deviation $\sigma_{j}$ was randomly generated from $\textrm{Unif} [0.05\hat{a}_j, 0.1\hat{a}_j]$ for instances of SC and IC while it was set as $\sigma_{j} = 0.1\hat{a}_{j}$ for SS. Every standard deviation value was rounded off to four decimal places. The capacity of knapsack was set to $b = \lfloor \sum_{j \in {N}} \hat{a}_j / 2\rfloor$.

We also generated instances in which the standard deviations are inversely proportional to the means $\hat{a}_{j}$ using already generated instances. The new standard deviation values are derived as $\hat{\sigma_{j}} := 10.0-\sigma_{j} \times \textrm{Unif}[0.5,0.8]$. The following two types of instances are additionally generated using the new standard deviation $\hat{\sigma_{j}}$ as follows:   
\begin{enumerate}
\item[-] Strongly correlated instances with inversely proportional standard deviations $\hat{\sigma_{j}}$ (SCR)
\item[-] Inverse strongly correlated instances with inversely proportional standard deviations $\hat{\sigma_{j}}$ (ICR)
\end{enumerate}

We set the probability $\rho$ to $0.95$ and $0.99$, whose corresponding values of $\mathrm{\Omega} = \sqrt{\rho/(1-\rho)}$ are about $4.36$ and $9.95$, respectively. We note that the value of $\mathrm{\Omega}= \sqrt{\rho/(1-\rho)}$ becomes greater than $\mathrm{\Phi}^{-1} (\rho)$, whose values are approximately $1.96$ and $2.58$ when the value of $\rho$ is $0.95$ and $0.99$, respectively. 

The size of instances $n$ were $100$, $400$, and $900$ for the comparison between our approach and others in terms of the quality of the upper and lower bounds and the computational performance. We also generated instances whose sizes were $2,500$, $4,900$, and $10,000$ to test our approach on large instances. For each instance type and size, ten instances were generated and tested. We report the average of ten experimental values for each entry in the tables. 

\setlength{\tabcolsep}{1pt}
\renewcommand{\arraystretch}{1.1}
\begin{table}[]
\caption{Computational results of solving \textbf{(RKPm)} to obtain the upper and lower bounds for \textbf{(SOCKP)} simultaneously for different $m$ values.}
\label{table07:01}
\center
\footnotesize
\begin{tabular}{c@{\extracolsep{4pt}}c@{\extracolsep{6pt}}c@{\extracolsep{6pt}}rr@{\extracolsep{6pt}}rr@{\extracolsep{6pt}}rr@{\extracolsep{6pt}}rr@{\extracolsep{4pt}}rr}
\hline 
\multirow{2}{*}{$n$} & \multirow{2}{*}{type} & \multirow{2}{*}{$\rho$} & \multicolumn{2}{c}{$m=\sqrt{n}/2$} & \multicolumn{2}{c}{$m=\sqrt{n}$} & \multicolumn{2}{c}{$m=2\sqrt{n}$} & \multicolumn{2}{c}{$m=3\sqrt{n}$} & \multicolumn{2}{c}{$m=4\sqrt{n}$} \\ \cline{4-5}  \cline{6-7} \cline{8-9} \cline{10-11} \cline{12-13} 
 & & & \multicolumn{1}{c}{time(s)} & \multicolumn{1}{c}{gap(\%)} & \multicolumn{1}{c}{time(s)} & \multicolumn{1}{c}{gap(\%)} & \multicolumn{1}{c}{time(s)} & \multicolumn{1}{c}{gap(\%)} & \multicolumn{1}{c}{time(s)} & \multicolumn{1}{c}{gap(\%)} & \multicolumn{1}{c}{time(s)} & \multicolumn{1}{c}{gap(\%)} \\ \hline 
\multirow{10}{*}{100} & \multirow{2}{*}{SC} & 0.95 & 0.05 & 1.82 & 0.09 & 0.49 & 0.18 & 0.11 & 0.28 & 0.06 & 0.37 & 0.04 \\
 & & 0.99 & 0.04 & 4.05 & 0.08 & 1.00 & 0.17 & 0.24 & 0.25 & 0.11 & 0.35 & 0.07 \\ \cline{2-3} \cline{4-5}  \cline{6-7} \cline{8-9} \cline{10-11} \cline{12-13} 
 & \multirow{2}{*}{SCR} & 0.95 & 0.05 & 3.86 & 0.09 & 0.77 & 0.18 & 0.20 & 0.29 & 0.09 & 0.36 & 0.05 \\
 & & 0.99 & 0.04 & 8.68 & 0.09 & 1.92 & 0.17 & 0.50 & 0.26 & 0.21 & 0.34 & 0.13 \\ \cline{2-3} \cline{4-5}  \cline{6-7} \cline{8-9} \cline{10-11} \cline{12-13} 
 & \multirow{2}{*}{IC} & 0.95 & 0.04 & 2.39 & 0.09 & 0.63 & 0.17 & 0.16 & 0.27 & 0.07 & 0.34 & 0.04 \\
 & & 0.99 & 0.04 & 5.14 & 0.08 & 1.40 & 0.17 & 0.35 & 0.25 & 0.16 & 0.33 & 0.10 \\ \cline{2-3} \cline{4-5}  \cline{6-7} \cline{8-9} \cline{10-11} \cline{12-13} 
 & \multirow{2}{*}{ICR} & 0.95 & 0.06 & 0.66 & 0.11 & 0.18 & 0.22 & 0.05 & 0.35 & 0.03 & 0.45 & 0.01 \\
 & & 0.99 & 0.05 & 1.49 & 0.09 & 0.44 & 0.18 & 0.12 & 0.28 & 0.06 & 0.37 & 0.04 \\ \cline{2-3} \cline{4-5}  \cline{6-7} \cline{8-9} \cline{10-11} \cline{12-13} 
 & \multirow{2}{*}{SS} & 0.95 & 0.05 & 2.86 & 0.09 & 0.70 & 0.18 & 0.18 & 0.29 & 0.08 & 0.36 & 0.05 \\
 & & 0.99 & 0.05 & 6.26 & 0.09 & 1.54 & 0.18 & 0.39 & 0.27 & 0.18 & 0.35 & 0.09 \\ \hline 
\multirow{10}{*}{400} & \multirow{2}{*}{SC} & 0.95 & 0.47 & 0.94 & 0.96 & 0.23 & 2.01 & 0.06 & 3.24 & 0.03 & 4.50 & 0.01 \\
 & & 0.99 & 0.43 & 2.09 & 0.86 & 0.52 & 1.84 & 0.13 & 2.91 & 0.06 & 4.18 & 0.03 \\ \cline{2-3} \cline{4-5}  \cline{6-7} \cline{8-9} \cline{10-11} \cline{12-13}  
 & \multirow{2}{*}{SCR} & 0.95 & 0.51 & 1.82 & 0.99 & 0.38 & 2.09 & 0.10 & 3.27 & 0.05 & 4.69 & 0.02 \\
 & & 0.99 & 0.44 & 4.38 & 0.85 & 0.90 & 1.82 & 0.23 & 2.84 & 0.10 & 4.15 & 0.06 \\ \cline{2-3} \cline{4-5}  \cline{6-7} \cline{8-9} \cline{10-11} \cline{12-13} 
 & \multirow{2}{*}{IC} & 0.95 & 0.42 & 1.24 & 0.86 & 0.32 & 1.80 & 0.08 & 2.84 & 0.03 & 4.18 & 0.02 \\
 & & 0.99 & 0.41 & 2.71 & 0.83 & 0.73 & 1.73 & 0.18 & 2.74 & 0.08 & 4.00 & 0.05 \\ \cline{2-3} \cline{4-5}  \cline{6-7} \cline{8-9} \cline{10-11} \cline{12-13} 
 & \multirow{2}{*}{ICR} & 0.95 & 0.62 & 0.35 & 1.19 & 0.10 & 2.53 & 0.03 & 3.93 & 0.01 & 5.90 & 0.01 \\
 & & 0.99 & 0.50 & 0.79 & 0.93 & 0.22 & 1.97 & 0.06 & 3.09 & 0.02 & 4.96 & 0.02 \\ \cline{2-3} \cline{4-5}  \cline{6-7} \cline{8-9} \cline{10-11} \cline{12-13} 
 & \multirow{2}{*}{SS} & 0.95 & 0.59 & 1.45 & 1.21 & 0.36 & 2.65 & 0.09 & 3.89 & 0.04 & 5.93 & 0.02 \\
 & & 0.99 & 0.61 & 3.25 & 1.27 & 0.81 & 2.72 & 0.21 & 4.02 & 0.09 & 5.83 & 0.05 \\ \hline 
\multirow{10}{*}{900} & \multirow{2}{*}{SC} & 0.95 & 2.15 & 0.63 & 4.23 & 0.16 & 10.27 & 0.04 & 16.52 & 0.02 & 26.61 & 0.01 \\
 & & 0.99 & 1.90 & 1.41 & 3.79 & 0.36 & 9.07 & 0.09 & 16.51 & 0.04 & 23.07 & 0.02 \\ \cline{2-3} \cline{4-5}  \cline{6-7} \cline{8-9} \cline{10-11} \cline{12-13} 
 & \multirow{2}{*}{SCR} & 0.95 & 2.35 & 1.22 & 4.68 & 0.26 & 11.36 & 0.06 & 18.77 & 0.03 & 27.67 & 0.02 \\
 & & 0.99 & 1.92 & 2.88 & 4.05 & 0.59 & 9.52 & 0.16 & 16.30 & 0.07 & 22.74 & 0.04 \\ \cline{2-3} \cline{4-5}  \cline{6-7} \cline{8-9} \cline{10-11} \cline{12-13} 
 & \multirow{2}{*}{IC} & 0.95 & 1.79 & 0.83 & 3.86 & 0.22 & 9.60 & 0.06 & 15.16 & 0.03 & 22.30 & 0.01 \\
 & & 0.99 & 1.71 & 1.87 & 3.73 & 0.50 & 9.10 & 0.13 & 15.35 & 0.06 & 22.07 & 0.03 \\ \cline{2-3} \cline{4-5}  \cline{6-7} \cline{8-9} \cline{10-11} \cline{12-13} 
 & \multirow{2}{*}{ICR} & 0.95 & 2.74 & 0.24 & 5.85 & 0.07 & 13.28 & 0.02 & 21.25 & 0.01 & 29.51 & 0.00 \\
 & & 0.99 & 2.08 & 0.54 & 4.46 & 0.15 & 9.69 & 0.04 & 17.06 & 0.02 & 24.14 & 0.01 \\ \cline{2-3} \cline{4-5}  \cline{6-7} \cline{8-9} \cline{10-11} \cline{12-13} 
 & \multirow{2}{*}{SS} & 0.95 & 3.55 & 0.97 & 7.53 & 0.24 & 15.84 & 0.06 & 26.05 & 0.03 & 36.61 & 0.01 \\
 & & 0.99 & 3.76 & 2.18 & 7.85 & 0.55 & 16.77 & 0.14 & 27.41 & 0.06 & 38.15 & 0.03 \\ \hline 
\end{tabular}
\end{table}

Table \ref{table07:01} presents the computational results of implementing \textbf{(RKPm)} with $\mathrm{\Delta} = m^{2}$ and  $\mathrm{\Delta} = m^{2} + n/4$ to obtain the upper and lower bounds for \textbf{(SOCKP)} simultaneously for each $m=\sqrt{n}/2, \sqrt{n}, \cdots, 4\sqrt{n}$ when $n = 100, 400$, and $900$. We present the computational time of the algorithm in seconds (time(s)) and the percentage gap between the upper and lower bounds ($\textrm{UB}$ and $\textrm{LB}$) on the optimal value of \textbf{(SOCKP)}, i.e. (gap(\%))$ =  (\textrm{UB} - \textrm{LB})/ \textrm{LB} \times 100$. The number of times for solving the ordinary BKPs is approximately equal to $2nm$ to obtain the upper and lower bounds together; hence, they are not reported in Table \ref{table07:01}.

The number of linear segments $m$ needed was at most $2\sqrt{n}$ when $n=100$ and $\sqrt{n}$ when $n=400$ and $n=900$ to obtain the gap between the upper and lower bounds less than $1\%$, while the required computational time was less than 0.2 seconds when $n=100$, 1.5 seconds when $n=400$, and 8 seconds when $n=900$, respectively. Moreover, the number of linear segments needed was at most $4\sqrt{n}$ when $n=100$ and $3\sqrt{n}$ when $n=400$ and $n=900$ to obtain less than $0.1\%$ gap, while the required computational time was less than a second when $n=100$, five seconds when $n=400$, and 30 seconds when $n=900$, respectively. We also observe that the gap tends to increase as the value of $\rho$ increases. 

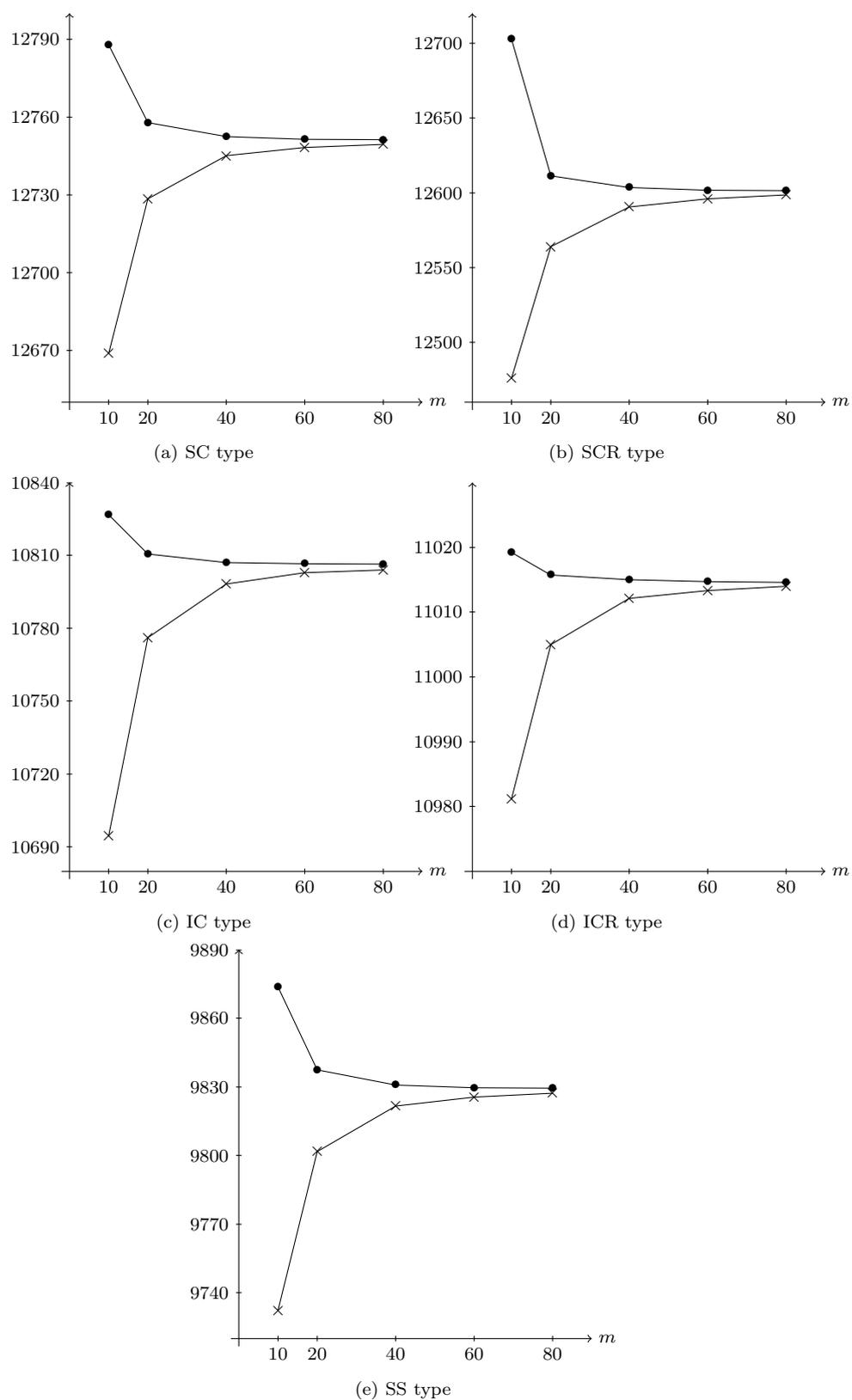
\begin{figure}

\begin{subfigure}{.5\textwidth}
\begin{center}
\begin{tikzpicture}[scale=6]
\draw[->] (-0.02,0) -- (0.90,0) node[right] {$m$};
\draw[->] (0,-0.02) -- (0,1.00) node[above] {};
\foreach \x/\xtext in {0.1/10, 0.2/20, 0.4/40, 0.6/60, 0.8/80}
\draw[shift={(\x,0)}] (0pt,0.2pt) -- (0pt,-0.2pt) node[below] {$\xtext$};
\foreach \y/\ytext in {0.133/12670, 0.333/12700, 0.533/12730, 0.733/12760, 0.933/12790}
\draw[shift={(0,\y)}] (0.2pt,0pt) -- (-0.2pt,0pt) node[left] {$\ytext$};
\draw [] (0.1,0.919) -- (0.2,0.719) -- (0.4,0.683) -- (0.6,0.676) -- (0.8,0.675);  
\draw [] (0.1,0.125) -- (0.2,0.523) -- (0.4,0.634) -- (0.6,0.655) -- (0.8,0.663); 
\draw (0.1,0.919) node[] (A0) {\textbullet}; \draw (0.1,0.125) node[] (B0) {$\mathbf{\times}$};
\draw (0.2,0.719) node[] (A1) {\textbullet}; \draw (0.2,0.523) node[] (B1) {$\mathbf{\times}$};
\draw (0.4,0.683) node[] (A2) {\textbullet}; \draw (0.4,0.634) node[] (B2) {$\mathbf{\times}$};
\draw (0.6,0.676) node[] (A3) {\textbullet}; \draw (0.6,0.655) node[] (B3) {$\mathbf{\times}$};
\draw (0.8,0.675) node[] (A4) {\textbullet}; \draw (0.8,0.663) node[] (B4) {$\mathbf{\times}$};
\end{tikzpicture} 
\caption{SC type}
\label{fig07:06}
\end{center}
\end{subfigure}
\begin{subfigure}{.5\textwidth}
\begin{center}
\begin{tikzpicture}[scale=6]
\draw[->] (-0.02,0) -- (0.90,0) node[right] {$m$};
\draw[->] (0,-0.02) -- (0,1.00) node[above] {};
\foreach \x/\xtext in {0.1/10, 0.2/20, 0.4/40, 0.6/60, 0.8/80}
\draw[shift={(\x,0)}] (0pt,0.2pt) -- (0pt,-0.2pt) node[below] {$\xtext$};
\foreach \y/\ytext in {0.154/12500, 0.346/12550, 0.538/12600, 0.731/12650, 0.923/12700}
\draw[shift={(0,\y)}] (0.2pt,0pt) -- (-0.2pt,0pt) node[left] {$\ytext$};
\draw [] (0.1,0.933) -- (0.2,0.582) -- (0.4,0.552) -- (0.6,0.545) -- (0.8,0.544);  
\draw [] (0.1,0.062) -- (0.2,0.400) -- (0.4,0.502) -- (0.6,0.523) -- (0.8,0.533); 
\draw (0.1,0.933) node[] (A0) {\textbullet}; \draw (0.1,0.062) node[] (B0) {$\mathbf{\times}$};
\draw (0.2,0.582) node[] (A1) {\textbullet}; \draw (0.2,0.400) node[] (B1) {$\mathbf{\times}$};
\draw (0.4,0.552) node[] (A2) {\textbullet}; \draw (0.4,0.502) node[] (B2) {$\mathbf{\times}$};
\draw (0.6,0.545) node[] (A3) {\textbullet}; \draw (0.6,0.523) node[] (B3) {$\mathbf{\times}$};
\draw (0.8,0.544) node[] (A4) {\textbullet}; \draw (0.8,0.533) node[] (B4) {$\mathbf{\times}$};
\end{tikzpicture} 
\caption{SCR type}
\label{fig07:07}
\end{center}
\end{subfigure}
\begin{subfigure}{.5\textwidth}
\begin{center}
\begin{tikzpicture}[scale=6]
\draw[->] (-0.02,0) -- (0.90,0) node[right] {$m$};
\draw[->] (0,-0.02) -- (0,1.00) node[above] {};
\foreach \x/\xtext in {0.1/10, 0.2/20, 0.4/40, 0.6/60, 0.8/80}
\draw[shift={(\x,0)}] (0pt,0.2pt) -- (0pt,-0.2pt) node[below] {$\xtext$};
\foreach \y/\ytext in {0.063/10690, 0.25/10720, 0.438/10750, 0.625/10780, 0.813/10810, 0.999/10840}
\draw[shift={(0,\y)}] (0.2pt,0pt) -- (-0.2pt,0pt) node[left] {$\ytext$};
\draw [] (0.1,0.917) -- (0.2,0.816) -- (0.4,0.794) -- (0.6,0.791) -- (0.8,0.790);  
\draw [] (0.1,0.091) -- (0.2,0.601) -- (0.4,0.739) -- (0.6,0.768) -- (0.8,0.775); 
\draw (0.1,0.917) node[] (A0) {\textbullet}; \draw (0.1,0.091) node[] (B0) {$\mathbf{\times}$};
\draw (0.2,0.816) node[] (A1) {\textbullet}; \draw (0.2,0.601) node[] (B1) {$\mathbf{\times}$};
\draw (0.4,0.794) node[] (A2) {\textbullet}; \draw (0.4,0.739) node[] (B2) {$\mathbf{\times}$};
\draw (0.6,0.791) node[] (A3) {\textbullet}; \draw (0.6,0.768) node[] (B3) {$\mathbf{\times}$};
\draw (0.8,0.790) node[] (A4) {\textbullet}; \draw (0.8,0.775) node[] (B4) {$\mathbf{\times}$};
\end{tikzpicture} 
\caption{IC type}
\label{fig07:08}
\end{center}
\end{subfigure}
\begin{subfigure}{.5\textwidth}
\begin{center}
\begin{tikzpicture}[scale=6]
\draw[->] (-0.02,0) -- (0.90,0) node[right] {$m$};
\draw[->] (0,-0.02) -- (0,1.00) node[above] {};
\foreach \x/\xtext in {0.1/10, 0.2/20, 0.4/40, 0.6/60, 0.8/80}
\draw[shift={(\x,0)}] (0pt,0.2pt) -- (0pt,-0.2pt) node[below] {$\xtext$};
\foreach \y/\ytext in {0.167/10980, 0.333/10990, 0.500/11000, 0.667/11010, 0.833/11020}
\draw[shift={(0,\y)}] (0.2pt,0pt) -- (-0.2pt,0pt) node[left] {$\ytext$};
\draw [] (0.1,0.820) -- (0.2,0.762) -- (0.4,0.750) -- (0.6,0.745) -- (0.8,0.743);  
\draw [] (0.1,0.187) -- (0.2,0.583) -- (0.4,0.702) -- (0.6,0.722) -- (0.8,0.733); 
\draw (0.1,0.820) node[] (A0) {\textbullet}; \draw (0.1,0.187) node[] (B0) {$\mathbf{\times}$};
\draw (0.2,0.762) node[] (A1) {\textbullet}; \draw (0.2,0.583) node[] (B1) {$\mathbf{\times}$};
\draw (0.4,0.750) node[] (A2) {\textbullet}; \draw (0.4,0.702) node[] (B2) {$\mathbf{\times}$};
\draw (0.6,0.745) node[] (A3) {\textbullet}; \draw (0.6,0.722) node[] (B3) {$\mathbf{\times}$};
\draw (0.8,0.743) node[] (A4) {\textbullet}; \draw (0.8,0.733) node[] (B4) {$\mathbf{\times}$};
\end{tikzpicture} 
\caption{ICR type}
\label{fig07:09}
\end{center}
\end{subfigure}
\begin{subfigure}{1.0\textwidth}
\begin{center}
\begin{tikzpicture}[scale=6]
\draw[->] (-0.02,0) -- (0.90,0) node[right] {$m$};
\draw[->] (0,-0.02) -- (0,1.00) node[above] {};
\foreach \x/\xtext in {0.1/10, 0.2/20, 0.4/40, 0.6/60, 0.8/80}
\draw[shift={(\x,0)}] (0pt,0.2pt) -- (0pt,-0.2pt) node[below] {$\xtext$};
\foreach \y/\ytext in {0.118/9740, 0.294/9770, 0.471/9800, 0.647/9830, 0.824/9860, 0.999/9890}
\draw[shift={(0,\y)}] (0.2pt,0pt) -- (-0.2pt,0pt) node[left] {$\ytext$};
\draw [] (0.1,0.904) -- (0.2,0.691) -- (0.4,0.652) -- (0.6,0.645) -- (0.8,0.644);  
\draw [] (0.1,0.073) -- (0.2,0.481) -- (0.4,0.598) -- (0.6,0.621) -- (0.8,0.631); 
\draw (0.1,0.904) node[] (A0) {\textbullet}; \draw (0.1,0.073) node[] (B0) {$\mathbf{\times}$};
\draw (0.2,0.691) node[] (A1) {\textbullet}; \draw (0.2,0.481) node[] (B1) {$\mathbf{\times}$};
\draw (0.4,0.652) node[] (A2) {\textbullet}; \draw (0.4,0.598) node[] (B2) {$\mathbf{\times}$};
\draw (0.6,0.645) node[] (A3) {\textbullet}; \draw (0.6,0.621) node[] (B3) {$\mathbf{\times}$};
\draw (0.8,0.644) node[] (A4) {\textbullet}; \draw (0.8,0.631) node[] (B4) {$\mathbf{\times}$};
\end{tikzpicture} 
\caption{SS type}
\label{fig07:10}
\end{center}
\end{subfigure}

\caption{The upper and lower bounds for \textbf{(SOCKP)}. Each point represents the upper and lower bounds (\textbullet \ and $\mathbf{\times}$), respectively, for $m=10,20,40,60,80$, when $n=400$, $\rho=0.95$.}
\label{fig07_02}
\end{figure}

Figure \ref{fig07_02} shows the change in the upper and lower bound values depending on $m$ for the instances with $n=400$, $\rho=0.95$. We can see that both the upper and lower bounds quickly converge to the optimal value of \textbf{(SOCKP)} as $m$ increases. It can also be observed that the upper bound is closer to the optimal value of \textbf{(SOCKP)} than the lower bound at the same value of $m$.

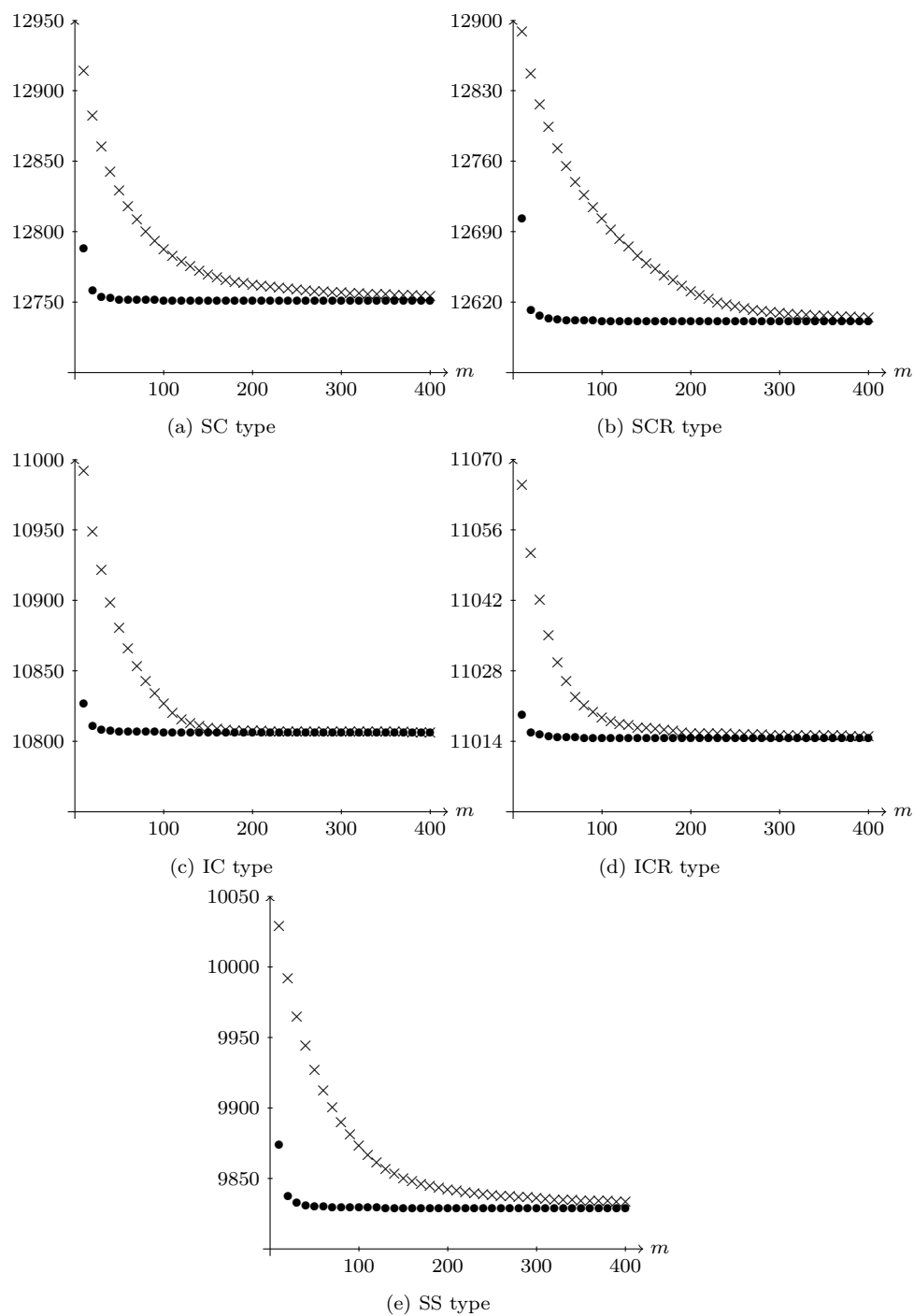
\begin{figure}
\begin{subfigure}{.5\textwidth}
\begin{center}
\begin{tikzpicture}[scale=5]
\draw[->] (-0.02,0) -- (1.05,0) node[right] {$m$};
\draw[->] (0,-0.02) -- (0,1.00) node[above] {};
\foreach \x/\xtext in {0.25/100, 0.5/200, 0.75/300, 1.0/400}
\draw[shift={(\x,0)}] (0pt,0.2pt) -- (0pt,-0.2pt) node[below] {$\xtext$};
\foreach \y/\ytext in {0.2/12750, 0.4/12800, 0.6/12850, 0.8/12900, 0.999/12950}
\draw[shift={(0,\y)}] (0.2pt,0pt) -- (-0.2pt,0pt) node[left] {$\ytext$};

\draw ( 0.025 , 0.3512 ) node[] ( A042 ) {\textbullet};    \draw ( 0.025 , 0.8560 ) node[] ( B042 ) {$\mathbf{\times}$};
\draw ( 0.05 , 0.2312 ) node[] ( A043 ) {\textbullet};    \draw ( 0.05 , 0.7300 ) node[] ( B043 ) {$\mathbf{\times}$};
\draw ( 0.075 , 0.2148 ) node[] ( A044 ) {\textbullet};    \draw ( 0.075 , 0.6404 ) node[] ( B044 ) {$\mathbf{\times}$};
\draw ( 0.1 , 0.2096 ) node[] ( A045 ) {\textbullet};    \draw ( 0.1 , 0.5712 ) node[] ( B045 ) {$\mathbf{\times}$};
\draw ( 0.125 , 0.2060 ) node[] ( A046 ) {\textbullet};    \draw ( 0.125 , 0.5164 ) node[] ( B046 ) {$\mathbf{\times}$};
\draw ( 0.15 , 0.2056 ) node[] ( A047 ) {\textbullet};    \draw ( 0.15 , 0.4716 ) node[] ( B047 ) {$\mathbf{\times}$};
\draw ( 0.175 , 0.2048 ) node[] ( A048 ) {\textbullet};    \draw ( 0.175 , 0.4340 ) node[] ( B048 ) {$\mathbf{\times}$};
\draw ( 0.2 , 0.2048 ) node[] ( A049 ) {\textbullet};    \draw ( 0.2 , 0.4012 ) node[] ( B049 ) {$\mathbf{\times}$};
\draw ( 0.225 , 0.2044 ) node[] ( A050 ) {\textbullet};    \draw ( 0.225 , 0.3748 ) node[] ( B050 ) {$\mathbf{\times}$};
\draw ( 0.25 , 0.2036 ) node[] ( A051 ) {\textbullet};    \draw ( 0.25 , 0.3512 ) node[] ( B051 ) {$\mathbf{\times}$};
\draw ( 0.275 , 0.2028 ) node[] ( A052 ) {\textbullet};    \draw ( 0.275 , 0.3320 ) node[] ( B052 ) {$\mathbf{\times}$};
\draw ( 0.3 , 0.2028 ) node[] ( A053 ) {\textbullet};    \draw ( 0.3 , 0.3160 ) node[] ( B053 ) {$\mathbf{\times}$};
\draw ( 0.325 , 0.2028 ) node[] ( A054 ) {\textbullet};    \draw ( 0.325 , 0.3012 ) node[] ( B054 ) {$\mathbf{\times}$};
\draw ( 0.35 , 0.2028 ) node[] ( A055 ) {\textbullet};    \draw ( 0.35 , 0.2892 ) node[] ( B055 ) {$\mathbf{\times}$};
\draw ( 0.375 , 0.2028 ) node[] ( A056 ) {\textbullet};    \draw ( 0.375 , 0.2772 ) node[] ( B056 ) {$\mathbf{\times}$};
\draw ( 0.4 , 0.2024 ) node[] ( A057 ) {\textbullet};    \draw ( 0.4 , 0.2696 ) node[] ( B057 ) {$\mathbf{\times}$};
\draw ( 0.425 , 0.2020 ) node[] ( A058 ) {\textbullet};    \draw ( 0.425 , 0.2632 ) node[] ( B058 ) {$\mathbf{\times}$};
\draw ( 0.45 , 0.2020 ) node[] ( A059 ) {\textbullet};    \draw ( 0.45 , 0.2580 ) node[] ( B059 ) {$\mathbf{\times}$};
\draw ( 0.475 , 0.2020 ) node[] ( A060 ) {\textbullet};    \draw ( 0.475 , 0.2536 ) node[] ( B060 ) {$\mathbf{\times}$};
\draw ( 0.5 , 0.2020 ) node[] ( A061 ) {\textbullet};    \draw ( 0.5 , 0.2496 ) node[] ( B061 ) {$\mathbf{\times}$};
\draw ( 0.525 , 0.2020 ) node[] ( A062 ) {\textbullet};    \draw ( 0.525 , 0.2464 ) node[] ( B062 ) {$\mathbf{\times}$};
\draw ( 0.55 , 0.2020 ) node[] ( A063 ) {\textbullet};    \draw ( 0.55 , 0.2432 ) node[] ( B063 ) {$\mathbf{\times}$};
\draw ( 0.575 , 0.2020 ) node[] ( A064 ) {\textbullet};    \draw ( 0.575 , 0.2400 ) node[] ( B064 ) {$\mathbf{\times}$};
\draw ( 0.6 , 0.2020 ) node[] ( A065 ) {\textbullet};    \draw ( 0.6 , 0.2376 ) node[] ( B065 ) {$\mathbf{\times}$};
\draw ( 0.625 , 0.2020 ) node[] ( A066 ) {\textbullet};    \draw ( 0.625 , 0.2356 ) node[] ( B066 ) {$\mathbf{\times}$};
\draw ( 0.65 , 0.2020 ) node[] ( A067 ) {\textbullet};    \draw ( 0.65 , 0.2324 ) node[] ( B067 ) {$\mathbf{\times}$};
\draw ( 0.675 , 0.2020 ) node[] ( A068 ) {\textbullet};    \draw ( 0.675 , 0.2316 ) node[] ( B068 ) {$\mathbf{\times}$};
\draw ( 0.7 , 0.2020 ) node[] ( A069 ) {\textbullet};    \draw ( 0.7 , 0.2296 ) node[] ( B069 ) {$\mathbf{\times}$};
\draw ( 0.725 , 0.2020 ) node[] ( A070 ) {\textbullet};    \draw ( 0.725 , 0.2284 ) node[] ( B070 ) {$\mathbf{\times}$};
\draw ( 0.75 , 0.2020 ) node[] ( A071 ) {\textbullet};    \draw ( 0.75 , 0.2268 ) node[] ( B071 ) {$\mathbf{\times}$};
\draw ( 0.775 , 0.2020 ) node[] ( A072 ) {\textbullet};    \draw ( 0.775 , 0.2252 ) node[] ( B072 ) {$\mathbf{\times}$};
\draw ( 0.8 , 0.2020 ) node[] ( A073 ) {\textbullet};    \draw ( 0.8 , 0.2244 ) node[] ( B073 ) {$\mathbf{\times}$};
\draw ( 0.825 , 0.2020 ) node[] ( A074 ) {\textbullet};    \draw ( 0.825 , 0.2236 ) node[] ( B074 ) {$\mathbf{\times}$};
\draw ( 0.85 , 0.2020 ) node[] ( A075 ) {\textbullet};    \draw ( 0.85 , 0.2228 ) node[] ( B075 ) {$\mathbf{\times}$};
\draw ( 0.875 , 0.2020 ) node[] ( A076 ) {\textbullet};    \draw ( 0.875 , 0.2220 ) node[] ( B076 ) {$\mathbf{\times}$};
\draw ( 0.9 , 0.2020 ) node[] ( A077 ) {\textbullet};    \draw ( 0.9 , 0.2208 ) node[] ( B077 ) {$\mathbf{\times}$};
\draw ( 0.925 , 0.2020 ) node[] ( A078 ) {\textbullet};    \draw ( 0.925 , 0.2196 ) node[] ( B078 ) {$\mathbf{\times}$};
\draw ( 0.95 , 0.2020 ) node[] ( A079 ) {\textbullet};    \draw ( 0.95 , 0.2188 ) node[] ( B079 ) {$\mathbf{\times}$};
\draw ( 0.975 , 0.2020 ) node[] ( A080 ) {\textbullet};    \draw ( 0.975 , 0.2184 ) node[] ( B080 ) {$\mathbf{\times}$};
\draw ( 1 , 0.2020 ) node[] ( A081 ) {\textbullet};    \draw ( 1 , 0.2184 ) node[] ( B081 ) {$\mathbf{\times}$};

\end{tikzpicture} 
\caption{SC type}
\label{fig07:01}
\end{center}
\end{subfigure}
\begin{subfigure}{.5\textwidth}
\begin{center}
\begin{tikzpicture}[scale=5]
\draw[->] (-0.02,0) -- (1.05,0) node[right] {$m$};
\draw[->] (0,-0.02) -- (0,1.00) node[above] {};
\foreach \x/\xtext in {0.25/100, 0.5/200, 0.75/300, 1.0/400}
\draw[shift={(\x,0)}] (0pt,0.2pt) -- (0pt,-0.2pt) node[below] {$\xtext$};
\foreach \y/\ytext in {0.2/12620, 0.4/12690, 0.6/12760, 0.8/12830, 0.999/12900}
\draw[shift={(0,\y)}] (0.2pt,0pt) -- (-0.2pt,0pt) node[left] {$\ytext$};
\draw ( 0.025 , 0.4363 ) node[] ( A042 ) {\textbullet};    \draw ( 0.025 , 0.9689 ) node[] ( B042 ) {$\mathbf{\times}$};
\draw ( 0.05 , 0.1754 ) node[] ( A043 ) {\textbullet};    \draw ( 0.05 , 0.8491 ) node[] ( B043 ) {$\mathbf{\times}$};
\draw ( 0.075 , 0.1600 ) node[] ( A044 ) {\textbullet};    \draw ( 0.075 , 0.7597 ) node[] ( B044 ) {$\mathbf{\times}$};
\draw ( 0.1 , 0.1526 ) node[] ( A045 ) {\textbullet};    \draw ( 0.1 , 0.6963 ) node[] ( B045 ) {$\mathbf{\times}$};
\draw ( 0.125 , 0.1497 ) node[] ( A046 ) {\textbullet};    \draw ( 0.125 , 0.6354 ) node[] ( B046 ) {$\mathbf{\times}$};
\draw ( 0.15 , 0.1477 ) node[] ( A047 ) {\textbullet};    \draw ( 0.15 , 0.5857 ) node[] ( B047 ) {$\mathbf{\times}$};
\draw ( 0.175 , 0.1477 ) node[] ( A048 ) {\textbullet};    \draw ( 0.175 , 0.5411 ) node[] ( B048 ) {$\mathbf{\times}$};
\draw ( 0.2 , 0.1469 ) node[] ( A049 ) {\textbullet};    \draw ( 0.2 , 0.5043 ) node[] ( B049 ) {$\mathbf{\times}$};
\draw ( 0.225 , 0.1463 ) node[] ( A050 ) {\textbullet};    \draw ( 0.225 , 0.4703 ) node[] ( B050 ) {$\mathbf{\times}$};
\draw ( 0.25 , 0.1454 ) node[] ( A051 ) {\textbullet};    \draw ( 0.25 , 0.4363 ) node[] ( B051 ) {$\mathbf{\times}$};
\draw ( 0.275 , 0.1451 ) node[] ( A052 ) {\textbullet};    \draw ( 0.275 , 0.4054 ) node[] ( B052 ) {$\mathbf{\times}$};
\draw ( 0.3 , 0.1451 ) node[] ( A053 ) {\textbullet};    \draw ( 0.3 , 0.3789 ) node[] ( B053 ) {$\mathbf{\times}$};
\draw ( 0.325 , 0.1451 ) node[] ( A054 ) {\textbullet};    \draw ( 0.325 , 0.3574 ) node[] ( B054 ) {$\mathbf{\times}$};
\draw ( 0.35 , 0.1451 ) node[] ( A055 ) {\textbullet};    \draw ( 0.35 , 0.3323 ) node[] ( B055 ) {$\mathbf{\times}$};
\draw ( 0.375 , 0.1449 ) node[] ( A056 ) {\textbullet};    \draw ( 0.375 , 0.3106 ) node[] ( B056 ) {$\mathbf{\times}$};
\draw ( 0.4 , 0.1449 ) node[] ( A057 ) {\textbullet};    \draw ( 0.4 , 0.2937 ) node[] ( B057 ) {$\mathbf{\times}$};
\draw ( 0.425 , 0.1449 ) node[] ( A058 ) {\textbullet};    \draw ( 0.425 , 0.2757 ) node[] ( B058 ) {$\mathbf{\times}$};
\draw ( 0.45 , 0.1449 ) node[] ( A059 ) {\textbullet};    \draw ( 0.45 , 0.2611 ) node[] ( B059 ) {$\mathbf{\times}$};
\draw ( 0.475 , 0.1449 ) node[] ( A060 ) {\textbullet};    \draw ( 0.475 , 0.2463 ) node[] ( B060 ) {$\mathbf{\times}$};
\draw ( 0.5 , 0.1449 ) node[] ( A061 ) {\textbullet};    \draw ( 0.5 , 0.2317 ) node[] ( B061 ) {$\mathbf{\times}$};
\draw ( 0.525 , 0.1449 ) node[] ( A062 ) {\textbullet};    \draw ( 0.525 , 0.2209 ) node[] ( B062 ) {$\mathbf{\times}$};
\draw ( 0.55 , 0.1449 ) node[] ( A063 ) {\textbullet};    \draw ( 0.55 , 0.2089 ) node[] ( B063 ) {$\mathbf{\times}$};
\draw ( 0.575 , 0.1449 ) node[] ( A064 ) {\textbullet};    \draw ( 0.575 , 0.1994 ) node[] ( B064 ) {$\mathbf{\times}$};
\draw ( 0.6 , 0.1449 ) node[] ( A065 ) {\textbullet};    \draw ( 0.6 , 0.1929 ) node[] ( B065 ) {$\mathbf{\times}$};
\draw ( 0.625 , 0.1449 ) node[] ( A066 ) {\textbullet};    \draw ( 0.625 , 0.1869 ) node[] ( B066 ) {$\mathbf{\times}$};
\draw ( 0.65 , 0.1449 ) node[] ( A067 ) {\textbullet};    \draw ( 0.65 , 0.1823 ) node[] ( B067 ) {$\mathbf{\times}$};
\draw ( 0.675 , 0.1449 ) node[] ( A068 ) {\textbullet};    \draw ( 0.675 , 0.1771 ) node[] ( B068 ) {$\mathbf{\times}$};
\draw ( 0.7 , 0.1449 ) node[] ( A069 ) {\textbullet};    \draw ( 0.7 , 0.1737 ) node[] ( B069 ) {$\mathbf{\times}$};
\draw ( 0.725 , 0.1449 ) node[] ( A070 ) {\textbullet};    \draw ( 0.725 , 0.1709 ) node[] ( B070 ) {$\mathbf{\times}$};
\draw ( 0.75 , 0.1449 ) node[] ( A071 ) {\textbullet};    \draw ( 0.75 , 0.1683 ) node[] ( B071 ) {$\mathbf{\times}$};
\draw ( 0.775 , 0.1449 ) node[] ( A072 ) {\textbullet};    \draw ( 0.775 , 0.1663 ) node[] ( B072 ) {$\mathbf{\times}$};
\draw ( 0.8 , 0.1449 ) node[] ( A073 ) {\textbullet};    \draw ( 0.8 , 0.1649 ) node[] ( B073 ) {$\mathbf{\times}$};
\draw ( 0.825 , 0.1449 ) node[] ( A074 ) {\textbullet};    \draw ( 0.825 , 0.1634 ) node[] ( B074 ) {$\mathbf{\times}$};
\draw ( 0.85 , 0.1449 ) node[] ( A075 ) {\textbullet};    \draw ( 0.85 , 0.1623 ) node[] ( B075 ) {$\mathbf{\times}$};
\draw ( 0.875 , 0.1449 ) node[] ( A076 ) {\textbullet};    \draw ( 0.875 , 0.1606 ) node[] ( B076 ) {$\mathbf{\times}$};
\draw ( 0.9 , 0.1449 ) node[] ( A077 ) {\textbullet};    \draw ( 0.9 , 0.1600 ) node[] ( B077 ) {$\mathbf{\times}$};
\draw ( 0.925 , 0.1449 ) node[] ( A078 ) {\textbullet};    \draw ( 0.925 , 0.1586 ) node[] ( B078 ) {$\mathbf{\times}$};
\draw ( 0.95 , 0.1449 ) node[] ( A079 ) {\textbullet};    \draw ( 0.95 , 0.1577 ) node[] ( B079 ) {$\mathbf{\times}$};
\draw ( 0.975 , 0.1449 ) node[] ( A080 ) {\textbullet};    \draw ( 0.975 , 0.1574 ) node[] ( B080 ) {$\mathbf{\times}$};
\draw ( 1 , 0.1449 ) node[] ( A081 ) {\textbullet};    \draw ( 1 , 0.1566 ) node[] ( B081 ) {$\mathbf{\times}$};

\end{tikzpicture} 
\caption{SCR type}
\label{fig07:02}
\end{center}
\end{subfigure}
\begin{subfigure}{.5\textwidth}
\begin{center}
\begin{tikzpicture}[scale=5]
\draw[->] (-0.02,0) -- (1.05,0) node[right] {$m$};
\draw[->] (0,-0.02) -- (0,1.00) node[above] {};
\foreach \x/\xtext in {0.25/100, 0.5/200, 0.75/300, 1.0/400}
\draw[shift={(\x,0)}] (0pt,0.2pt) -- (0pt,-0.2pt) node[below] {$\xtext$};
\foreach \y/\ytext in {0.2/10800, 0.4/10850, 0.6/10900, 0.8/10950, 0.999/11000}
\draw[shift={(0,\y)}] (0.2pt,0pt) -- (-0.2pt,0pt) node[left] {$\ytext$};

\draw ( 0.025 , 0.3068 ) node[] ( A042 ) {\textbullet};    \draw ( 0.025 , 0.9688 ) node[] ( B042 ) {$\mathbf{\times}$};
\draw ( 0.05 , 0.2420 ) node[] ( A043 ) {\textbullet};    \draw ( 0.05 , 0.7956 ) node[] ( B043 ) {$\mathbf{\times}$};
\draw ( 0.075 , 0.2308 ) node[] ( A044 ) {\textbullet};    \draw ( 0.075 , 0.6856 ) node[] ( B044 ) {$\mathbf{\times}$};
\draw ( 0.1 , 0.2280 ) node[] ( A045 ) {\textbullet};    \draw ( 0.1 , 0.5944 ) node[] ( B045 ) {$\mathbf{\times}$};
\draw ( 0.125 , 0.2268 ) node[] ( A046 ) {\textbullet};    \draw ( 0.125 , 0.5212 ) node[] ( B046 ) {$\mathbf{\times}$};
\draw ( 0.15 , 0.2264 ) node[] ( A047 ) {\textbullet};    \draw ( 0.15 , 0.4640 ) node[] ( B047 ) {$\mathbf{\times}$};
\draw ( 0.175 , 0.2260 ) node[] ( A048 ) {\textbullet};    \draw ( 0.175 , 0.4144 ) node[] ( B048 ) {$\mathbf{\times}$};
\draw ( 0.2 , 0.2256 ) node[] ( A049 ) {\textbullet};    \draw ( 0.2 , 0.3716 ) node[] ( B049 ) {$\mathbf{\times}$};
\draw ( 0.225 , 0.2256 ) node[] ( A050 ) {\textbullet};    \draw ( 0.225 , 0.3368 ) node[] ( B050 ) {$\mathbf{\times}$};
\draw ( 0.25 , 0.2252 ) node[] ( A051 ) {\textbullet};    \draw ( 0.25 , 0.3068 ) node[] ( B051 ) {$\mathbf{\times}$};
\draw ( 0.275 , 0.2248 ) node[] ( A052 ) {\textbullet};    \draw ( 0.275 , 0.2816 ) node[] ( B052 ) {$\mathbf{\times}$};
\draw ( 0.3 , 0.2244 ) node[] ( A053 ) {\textbullet};    \draw ( 0.3 , 0.2620 ) node[] ( B053 ) {$\mathbf{\times}$};
\draw ( 0.325 , 0.2240 ) node[] ( A054 ) {\textbullet};    \draw ( 0.325 , 0.2504 ) node[] ( B054 ) {$\mathbf{\times}$};
\draw ( 0.35 , 0.2240 ) node[] ( A055 ) {\textbullet};    \draw ( 0.35 , 0.2428 ) node[] ( B055 ) {$\mathbf{\times}$};
\draw ( 0.375 , 0.2240 ) node[] ( A056 ) {\textbullet};    \draw ( 0.375 , 0.2376 ) node[] ( B056 ) {$\mathbf{\times}$};
\draw ( 0.4 , 0.2240 ) node[] ( A057 ) {\textbullet};    \draw ( 0.4 , 0.2348 ) node[] ( B057 ) {$\mathbf{\times}$};
\draw ( 0.425 , 0.2240 ) node[] ( A058 ) {\textbullet};    \draw ( 0.425 , 0.2328 ) node[] ( B058 ) {$\mathbf{\times}$};
\draw ( 0.45 , 0.2240 ) node[] ( A059 ) {\textbullet};    \draw ( 0.45 , 0.2316 ) node[] ( B059 ) {$\mathbf{\times}$};
\draw ( 0.475 , 0.2240 ) node[] ( A060 ) {\textbullet};    \draw ( 0.475 , 0.2308 ) node[] ( B060 ) {$\mathbf{\times}$};
\draw ( 0.5 , 0.2236 ) node[] ( A061 ) {\textbullet};    \draw ( 0.5 , 0.2300 ) node[] ( B061 ) {$\mathbf{\times}$};
\draw ( 0.525 , 0.2236 ) node[] ( A062 ) {\textbullet};    \draw ( 0.525 , 0.2292 ) node[] ( B062 ) {$\mathbf{\times}$};
\draw ( 0.55 , 0.2240 ) node[] ( A063 ) {\textbullet};    \draw ( 0.55 , 0.2284 ) node[] ( B063 ) {$\mathbf{\times}$};
\draw ( 0.575 , 0.2236 ) node[] ( A064 ) {\textbullet};    \draw ( 0.575 , 0.2272 ) node[] ( B064 ) {$\mathbf{\times}$};
\draw ( 0.6 , 0.2236 ) node[] ( A065 ) {\textbullet};    \draw ( 0.6 , 0.2272 ) node[] ( B065 ) {$\mathbf{\times}$};
\draw ( 0.625 , 0.2236 ) node[] ( A066 ) {\textbullet};    \draw ( 0.625 , 0.2272 ) node[] ( B066 ) {$\mathbf{\times}$};
\draw ( 0.65 , 0.2236 ) node[] ( A067 ) {\textbullet};    \draw ( 0.65 , 0.2268 ) node[] ( B067 ) {$\mathbf{\times}$};
\draw ( 0.675 , 0.2236 ) node[] ( A068 ) {\textbullet};    \draw ( 0.675 , 0.2268 ) node[] ( B068 ) {$\mathbf{\times}$};
\draw ( 0.7 , 0.2236 ) node[] ( A069 ) {\textbullet};    \draw ( 0.7 , 0.2268 ) node[] ( B069 ) {$\mathbf{\times}$};
\draw ( 0.725 , 0.2236 ) node[] ( A070 ) {\textbullet};    \draw ( 0.725 , 0.2268 ) node[] ( B070 ) {$\mathbf{\times}$};
\draw ( 0.75 , 0.2236 ) node[] ( A071 ) {\textbullet};    \draw ( 0.75 , 0.2268 ) node[] ( B071 ) {$\mathbf{\times}$};
\draw ( 0.775 , 0.2236 ) node[] ( A072 ) {\textbullet};    \draw ( 0.775 , 0.2268 ) node[] ( B072 ) {$\mathbf{\times}$};
\draw ( 0.8 , 0.2236 ) node[] ( A073 ) {\textbullet};    \draw ( 0.8 , 0.2268 ) node[] ( B073 ) {$\mathbf{\times}$};
\draw ( 0.825 , 0.2236 ) node[] ( A074 ) {\textbullet};    \draw ( 0.825 , 0.2268 ) node[] ( B074 ) {$\mathbf{\times}$};
\draw ( 0.85 , 0.2236 ) node[] ( A075 ) {\textbullet};    \draw ( 0.85 , 0.2268 ) node[] ( B075 ) {$\mathbf{\times}$};
\draw ( 0.875 , 0.2236 ) node[] ( A076 ) {\textbullet};    \draw ( 0.875 , 0.2268 ) node[] ( B076 ) {$\mathbf{\times}$};
\draw ( 0.9 , 0.2236 ) node[] ( A077 ) {\textbullet};    \draw ( 0.9 , 0.2268 ) node[] ( B077 ) {$\mathbf{\times}$};
\draw ( 0.925 , 0.2236 ) node[] ( A078 ) {\textbullet};    \draw ( 0.925 , 0.2268 ) node[] ( B078 ) {$\mathbf{\times}$};
\draw ( 0.95 , 0.2236 ) node[] ( A079 ) {\textbullet};    \draw ( 0.95 , 0.2260 ) node[] ( B079 ) {$\mathbf{\times}$};
\draw ( 0.975 , 0.2236 ) node[] ( A080 ) {\textbullet};    \draw ( 0.975 , 0.2260 ) node[] ( B080 ) {$\mathbf{\times}$};
\draw ( 1 , 0.2236 ) node[] ( A081 ) {\textbullet};    \draw ( 1 , 0.2260 ) node[] ( B081 ) {$\mathbf{\times}$};

\end{tikzpicture} 
\caption{IC type}
\label{fig07:03}
\end{center}
\end{subfigure}
\begin{subfigure}{.5\textwidth}
\begin{center}
\begin{tikzpicture}[scale=5]
\draw[->] (-0.02,0) -- (1.05,0) node[right] {$m$};
\draw[->] (0,-0.02) -- (0,1.00) node[above] {};
\foreach \x/\xtext in {0.25/100, 0.5/200, 0.75/300, 1.0/400}
\draw[shift={(\x,0)}] (0pt,0.2pt) -- (0pt,-0.2pt) node[below] {$\xtext$};
\foreach \y/\ytext in {0.2/11014, 0.4/11028, 0.6/11042, 0.8/11056, 1.0/11070}
\draw[shift={(0,\y)}] (0.2pt,0pt) -- (-0.2pt,0pt) node[left] {$\ytext$};
\draw ( 0.025 , 0.2743 ) node[] ( A042 ) {\textbullet};    \draw ( 0.025 , 0.9271 ) node[] ( B042 ) {$\mathbf{\times}$};
\draw ( 0.05 , 0.2243 ) node[] ( A043 ) {\textbullet};    \draw ( 0.05 , 0.7343 ) node[] ( B043 ) {$\mathbf{\times}$};
\draw ( 0.075 , 0.2186 ) node[] ( A044 ) {\textbullet};    \draw ( 0.075 , 0.6014 ) node[] ( B044 ) {$\mathbf{\times}$};
\draw ( 0.1 , 0.2143 ) node[] ( A045 ) {\textbullet};    \draw ( 0.1 , 0.5000 ) node[] ( B045 ) {$\mathbf{\times}$};
\draw ( 0.125 , 0.2114 ) node[] ( A046 ) {\textbullet};    \draw ( 0.125 , 0.4243 ) node[] ( B046 ) {$\mathbf{\times}$};
\draw ( 0.15 , 0.2100 ) node[] ( A047 ) {\textbullet};    \draw ( 0.15 , 0.3700 ) node[] ( B047 ) {$\mathbf{\times}$};
\draw ( 0.175 , 0.2100 ) node[] ( A048 ) {\textbullet};    \draw ( 0.175 , 0.3271 ) node[] ( B048 ) {$\mathbf{\times}$};
\draw ( 0.2 , 0.2086 ) node[] ( A049 ) {\textbullet};    \draw ( 0.2 , 0.3014 ) node[] ( B049 ) {$\mathbf{\times}$};
\draw ( 0.225 , 0.2086 ) node[] ( A050 ) {\textbullet};    \draw ( 0.225 , 0.2829 ) node[] ( B050 ) {$\mathbf{\times}$};
\draw ( 0.25 , 0.2086 ) node[] ( A051 ) {\textbullet};    \draw ( 0.25 , 0.2686 ) node[] ( B051 ) {$\mathbf{\times}$};
\draw ( 0.275 , 0.2086 ) node[] ( A052 ) {\textbullet};    \draw ( 0.275 , 0.2571 ) node[] ( B052 ) {$\mathbf{\times}$};
\draw ( 0.3 , 0.2086 ) node[] ( A053 ) {\textbullet};    \draw ( 0.3 , 0.2500 ) node[] ( B053 ) {$\mathbf{\times}$};
\draw ( 0.325 , 0.2086 ) node[] ( A054 ) {\textbullet};    \draw ( 0.325 , 0.2471 ) node[] ( B054 ) {$\mathbf{\times}$};
\draw ( 0.35 , 0.2086 ) node[] ( A055 ) {\textbullet};    \draw ( 0.35 , 0.2371 ) node[] ( B055 ) {$\mathbf{\times}$};
\draw ( 0.375 , 0.2086 ) node[] ( A056 ) {\textbullet};    \draw ( 0.375 , 0.2371 ) node[] ( B056 ) {$\mathbf{\times}$};
\draw ( 0.4 , 0.2086 ) node[] ( A057 ) {\textbullet};    \draw ( 0.4 , 0.2357 ) node[] ( B057 ) {$\mathbf{\times}$};
\draw ( 0.425 , 0.2086 ) node[] ( A058 ) {\textbullet};    \draw ( 0.425 , 0.2329 ) node[] ( B058 ) {$\mathbf{\times}$};
\draw ( 0.45 , 0.2086 ) node[] ( A059 ) {\textbullet};    \draw ( 0.45 , 0.2300 ) node[] ( B059 ) {$\mathbf{\times}$};
\draw ( 0.475 , 0.2086 ) node[] ( A060 ) {\textbullet};    \draw ( 0.475 , 0.2243 ) node[] ( B060 ) {$\mathbf{\times}$};
\draw ( 0.5 , 0.2086 ) node[] ( A061 ) {\textbullet};    \draw ( 0.5 , 0.2229 ) node[] ( B061 ) {$\mathbf{\times}$};
\draw ( 0.525 , 0.2086 ) node[] ( A062 ) {\textbullet};    \draw ( 0.525 , 0.2229 ) node[] ( B062 ) {$\mathbf{\times}$};
\draw ( 0.55 , 0.2086 ) node[] ( A063 ) {\textbullet};    \draw ( 0.55 , 0.2229 ) node[] ( B063 ) {$\mathbf{\times}$};
\draw ( 0.575 , 0.2086 ) node[] ( A064 ) {\textbullet};    \draw ( 0.575 , 0.2229 ) node[] ( B064 ) {$\mathbf{\times}$};
\draw ( 0.6 , 0.2086 ) node[] ( A065 ) {\textbullet};    \draw ( 0.6 , 0.2229 ) node[] ( B065 ) {$\mathbf{\times}$};
\draw ( 0.625 , 0.2086 ) node[] ( A066 ) {\textbullet};    \draw ( 0.625 , 0.2200 ) node[] ( B066 ) {$\mathbf{\times}$};
\draw ( 0.65 , 0.2086 ) node[] ( A067 ) {\textbullet};    \draw ( 0.65 , 0.2200 ) node[] ( B067 ) {$\mathbf{\times}$};
\draw ( 0.675 , 0.2086 ) node[] ( A068 ) {\textbullet};    \draw ( 0.675 , 0.2200 ) node[] ( B068 ) {$\mathbf{\times}$};
\draw ( 0.7 , 0.2086 ) node[] ( A069 ) {\textbullet};    \draw ( 0.7 , 0.2186 ) node[] ( B069 ) {$\mathbf{\times}$};
\draw ( 0.725 , 0.2086 ) node[] ( A070 ) {\textbullet};    \draw ( 0.725 , 0.2171 ) node[] ( B070 ) {$\mathbf{\times}$};
\draw ( 0.75 , 0.2086 ) node[] ( A071 ) {\textbullet};    \draw ( 0.75 , 0.2171 ) node[] ( B071 ) {$\mathbf{\times}$};
\draw ( 0.775 , 0.2086 ) node[] ( A072 ) {\textbullet};    \draw ( 0.775 , 0.2171 ) node[] ( B072 ) {$\mathbf{\times}$};
\draw ( 0.8 , 0.2086 ) node[] ( A073 ) {\textbullet};    \draw ( 0.8 , 0.2171 ) node[] ( B073 ) {$\mathbf{\times}$};
\draw ( 0.825 , 0.2086 ) node[] ( A074 ) {\textbullet};    \draw ( 0.825 , 0.2171 ) node[] ( B074 ) {$\mathbf{\times}$};
\draw ( 0.85 , 0.2086 ) node[] ( A075 ) {\textbullet};    \draw ( 0.85 , 0.2171 ) node[] ( B075 ) {$\mathbf{\times}$};
\draw ( 0.875 , 0.2086 ) node[] ( A076 ) {\textbullet};    \draw ( 0.875 , 0.2171 ) node[] ( B076 ) {$\mathbf{\times}$};
\draw ( 0.9 , 0.2086 ) node[] ( A077 ) {\textbullet};    \draw ( 0.9 , 0.2171 ) node[] ( B077 ) {$\mathbf{\times}$};
\draw ( 0.925 , 0.2086 ) node[] ( A078 ) {\textbullet};    \draw ( 0.925 , 0.2171 ) node[] ( B078 ) {$\mathbf{\times}$};
\draw ( 0.95 , 0.2086 ) node[] ( A079 ) {\textbullet};    \draw ( 0.95 , 0.2157 ) node[] ( B079 ) {$\mathbf{\times}$};
\draw ( 0.975 , 0.2086 ) node[] ( A080 ) {\textbullet};    \draw ( 0.975 , 0.2143 ) node[] ( B080 ) {$\mathbf{\times}$};
\draw ( 1 , 0.2086 ) node[] ( A081 ) {\textbullet};    \draw ( 1 , 0.2143 ) node[] ( B081 ) {$\mathbf{\times}$};

\end{tikzpicture} 
\caption{ICR type}
\label{fig07:04}
\end{center}
\end{subfigure}
\begin{subfigure}{1.0\textwidth}
\begin{center}
\begin{tikzpicture}[scale=5]
\draw[->] (-0.02,0) -- (1.05,0) node[right] {$m$};
\draw[->] (0,-0.02) -- (0,1.00) node[above] {};
\foreach \x/\xtext in {0.25/100, 0.5/200, 0.75/300, 1.0/400}
\draw[shift={(\x,0)}] (0pt,0.2pt) -- (0pt,-0.2pt) node[below] {$\xtext$};
\foreach \y/\ytext in {0.2/9850, 0.4/9900, 0.6/9950, 0.8/10000, 1.0/10050}
\draw[shift={(0,\y)}] (0.2pt,0pt) -- (-0.2pt,0pt) node[left] {$\ytext$};
\draw ( 0.025 , 0.2944 ) node[] ( A042 ) {\textbullet};    \draw ( 0.025 , 0.9152 ) node[] ( B042 ) {$\mathbf{\times}$};
\draw ( 0.05 , 0.1496 ) node[] ( A043 ) {\textbullet};    \draw ( 0.05 , 0.7668 ) node[] ( B043 ) {$\mathbf{\times}$};
\draw ( 0.075 , 0.1300 ) node[] ( A044 ) {\textbullet};    \draw ( 0.075 , 0.6596 ) node[] ( B044 ) {$\mathbf{\times}$};
\draw ( 0.1 , 0.1236 ) node[] ( A045 ) {\textbullet};    \draw ( 0.1 , 0.5760 ) node[] ( B045 ) {$\mathbf{\times}$};
\draw ( 0.125 , 0.1204 ) node[] ( A046 ) {\textbullet};    \draw ( 0.125 , 0.5076 ) node[] ( B046 ) {$\mathbf{\times}$};
\draw ( 0.15 , 0.1184 ) node[] ( A047 ) {\textbullet};    \draw ( 0.15 , 0.4508 ) node[] ( B047 ) {$\mathbf{\times}$};
\draw ( 0.175 , 0.1180 ) node[] ( A048 ) {\textbullet};    \draw ( 0.175 , 0.4012 ) node[] ( B048 ) {$\mathbf{\times}$};
\draw ( 0.2 , 0.1180 ) node[] ( A049 ) {\textbullet};    \draw ( 0.2 , 0.3596 ) node[] ( B049 ) {$\mathbf{\times}$};
\draw ( 0.225 , 0.1176 ) node[] ( A050 ) {\textbullet};    \draw ( 0.225 , 0.3252 ) node[] ( B050 ) {$\mathbf{\times}$};
\draw ( 0.25 , 0.1172 ) node[] ( A051 ) {\textbullet};    \draw ( 0.25 , 0.2944 ) node[] ( B051 ) {$\mathbf{\times}$};
\draw ( 0.275 , 0.1160 ) node[] ( A052 ) {\textbullet};    \draw ( 0.275 , 0.2676 ) node[] ( B052 ) {$\mathbf{\times}$};
\draw ( 0.3 , 0.1160 ) node[] ( A053 ) {\textbullet};    \draw ( 0.3 , 0.2444 ) node[] ( B053 ) {$\mathbf{\times}$};
\draw ( 0.325 , 0.1156 ) node[] ( A054 ) {\textbullet};    \draw ( 0.325 , 0.2268 ) node[] ( B054 ) {$\mathbf{\times}$};
\draw ( 0.35 , 0.1152 ) node[] ( A055 ) {\textbullet};    \draw ( 0.35 , 0.2128 ) node[] ( B055 ) {$\mathbf{\times}$};
\draw ( 0.375 , 0.1152 ) node[] ( A056 ) {\textbullet};    \draw ( 0.375 , 0.2004 ) node[] ( B056 ) {$\mathbf{\times}$};
\draw ( 0.4 , 0.1152 ) node[] ( A057 ) {\textbullet};    \draw ( 0.4 , 0.1912 ) node[] ( B057 ) {$\mathbf{\times}$};
\draw ( 0.425 , 0.1152 ) node[] ( A058 ) {\textbullet};    \draw ( 0.425 , 0.1844 ) node[] ( B058 ) {$\mathbf{\times}$};
\draw ( 0.45 , 0.1152 ) node[] ( A059 ) {\textbullet};    \draw ( 0.45 , 0.1792 ) node[] ( B059 ) {$\mathbf{\times}$};
\draw ( 0.475 , 0.1152 ) node[] ( A060 ) {\textbullet};    \draw ( 0.475 , 0.1740 ) node[] ( B060 ) {$\mathbf{\times}$};
\draw ( 0.5 , 0.1152 ) node[] ( A061 ) {\textbullet};    \draw ( 0.5 , 0.1692 ) node[] ( B061 ) {$\mathbf{\times}$};
\draw ( 0.525 , 0.1152 ) node[] ( A062 ) {\textbullet};    \draw ( 0.525 , 0.1652 ) node[] ( B062 ) {$\mathbf{\times}$};
\draw ( 0.55 , 0.1152 ) node[] ( A063 ) {\textbullet};    \draw ( 0.55 , 0.1616 ) node[] ( B063 ) {$\mathbf{\times}$};
\draw ( 0.575 , 0.1152 ) node[] ( A064 ) {\textbullet};    \draw ( 0.575 , 0.1580 ) node[] ( B064 ) {$\mathbf{\times}$};
\draw ( 0.6 , 0.1152 ) node[] ( A065 ) {\textbullet};    \draw ( 0.6 , 0.1556 ) node[] ( B065 ) {$\mathbf{\times}$};
\draw ( 0.625 , 0.1152 ) node[] ( A066 ) {\textbullet};    \draw ( 0.625 , 0.1532 ) node[] ( B066 ) {$\mathbf{\times}$};
\draw ( 0.65 , 0.1152 ) node[] ( A067 ) {\textbullet};    \draw ( 0.65 , 0.1504 ) node[] ( B067 ) {$\mathbf{\times}$};
\draw ( 0.675 , 0.1152 ) node[] ( A068 ) {\textbullet};    \draw ( 0.675 , 0.1488 ) node[] ( B068 ) {$\mathbf{\times}$};
\draw ( 0.7 , 0.1152 ) node[] ( A069 ) {\textbullet};    \draw ( 0.7 , 0.1468 ) node[] ( B069 ) {$\mathbf{\times}$};
\draw ( 0.725 , 0.1148 ) node[] ( A070 ) {\textbullet};    \draw ( 0.725 , 0.1460 ) node[] ( B070 ) {$\mathbf{\times}$};
\draw ( 0.75 , 0.1148 ) node[] ( A071 ) {\textbullet};    \draw ( 0.75 , 0.1436 ) node[] ( B071 ) {$\mathbf{\times}$};
\draw ( 0.775 , 0.1148 ) node[] ( A072 ) {\textbullet};    \draw ( 0.775 , 0.1424 ) node[] ( B072 ) {$\mathbf{\times}$};
\draw ( 0.8 , 0.1148 ) node[] ( A073 ) {\textbullet};    \draw ( 0.8 , 0.1404 ) node[] ( B073 ) {$\mathbf{\times}$};
\draw ( 0.825 , 0.1148 ) node[] ( A074 ) {\textbullet};    \draw ( 0.825 , 0.1400 ) node[] ( B074 ) {$\mathbf{\times}$};
\draw ( 0.85 , 0.1148 ) node[] ( A075 ) {\textbullet};    \draw ( 0.85 , 0.1396 ) node[] ( B075 ) {$\mathbf{\times}$};
\draw ( 0.875 , 0.1148 ) node[] ( A076 ) {\textbullet};    \draw ( 0.875 , 0.1380 ) node[] ( B076 ) {$\mathbf{\times}$};
\draw ( 0.9 , 0.1148 ) node[] ( A077 ) {\textbullet};    \draw ( 0.9 , 0.1364 ) node[] ( B077 ) {$\mathbf{\times}$};
\draw ( 0.925 , 0.1148 ) node[] ( A078 ) {\textbullet};    \draw ( 0.925 , 0.1360 ) node[] ( B078 ) {$\mathbf{\times}$};
\draw ( 0.95 , 0.1148 ) node[] ( A079 ) {\textbullet};    \draw ( 0.95 , 0.1356 ) node[] ( B079 ) {$\mathbf{\times}$};
\draw ( 0.975 , 0.1148 ) node[] ( A080 ) {\textbullet};    \draw ( 0.975 , 0.1344 ) node[] ( B080 ) {$\mathbf{\times}$};
\draw ( 1 , 0.1148 ) node[] ( A081 ) {\textbullet};    \draw ( 1 , 0.1336 ) node[] ( B081 ) {$\mathbf{\times}$};

\end{tikzpicture} 
\caption{SS type} 
\label{fig07:05}
\end{center}
\end{subfigure}
\caption{Comparison of upper bounds for \textbf{(SOCKP)} between our approach (\textbullet) and Han et al. \cite{Han16} ($\mathbf{\times}$), respectively. Each point represents the upper bound (UB) for $m=10,20,\cdots,400$, when $n=400$, $\rho=0.95$.}
\label{fig07_01}
\end{figure}

Figure \ref{fig07_01} shows a comparison of the upper bounds obtained by our approach and Han et al.'s \cite{Han16} when $n=400$, $\rho=0.95$. Our algorithm spent at most 10\% more time than Han et al.'s  \cite{Han16}. However, our approach outperformed Han et al.'s \cite{Han16} in terms of the quality of the upper bounds. The upper bounds obtained when $m=10$ in our approach were almost the same as the upper bounds obtained when $m=100$ in their method. The upper bounds also obtained when $m=20$ in our approach  were almost the same as the upper bounds obtained when $m=150$--$250$ in their method.

\setlength{\tabcolsep}{5pt}
\renewcommand{\arraystretch}{1.0}
\begin{table}[]
\caption{Computational results of Algorithm \ref{algo06_01} for \textbf{(SOCKP)}.}
\label{table07:06}
\center
\begin{tabular}{ccc@{\extracolsep{10pt}}rrrrr}
\hline 
\multirow{2}{*}{$n$} & \multirow{2}{*}{type} & \multirow{2}{*}{$\rho$} & \multicolumn{5}{c}{exact}   \\
 & & & \multicolumn{1}{c}{\#itr} & \multicolumn{1}{c}{$\hat{m}$} & \multicolumn{1}{c}{\#knap} & \multicolumn{1}{c}{time(s)} & \multicolumn{1}{c}{obj} \\ \hline
\multirow{10}{*}{100} & \multirow{2}{*}{SC} & 0.95 & 2.9 & 44 & 7,144 & 0.42 & 3,093.6 \\
 & & 0.99 & 3.6 & 74 & 12,628 & 0.69 & 2,949.9 \\ \cline{2-3} \cline{4-8}
 & \multirow{2}{*}{SCR} & 0.95 & 3.4 & 56 & 9,256 & 0.53 & 3,011.9 \\
 & & 0.99 & 3.9 & 84 & 14,338 & 0.79 & 2,760.2 \\ \cline{2-3} \cline{4-8}
 & \multirow{2}{*}{IC} & 0.95 & 2.7 & 36 & 5,655 & 0.29 & 2,619.6 \\
 & & 0.99 & 3.2 & 64& 10,756 & 0.58 & 2,450.7 \\ \cline{2-3} \cline{4-8}
 & \multirow{2}{*}{ICR}  & 0.95 & 2.3 & 33 & 5,095 & 0.32 & 2,720.1 \\
 & & 0.99 & 2.5 & 37 & 5,821 & 0.33 & 2,667.2 \\ \cline{2-3} \cline{4-8}
 & \multirow{2}{*}{SS} & 0.95 & 3.6 & 70 & 11,890 & 0.67 & 2,297.7 \\
 & & 0.99 & 3.2 & 48 & 7,870 & 0.42 & 2,135.9 \\ \hline
\multirow{10}{*}{400} & \multirow{2}{*}{SC} & 0.95 & 3.8 & 152 & 108,801 & 11.45 & 12,750.5 \\
 & & 0.99 & 4.3 & 248 & 182,274 & 28.07 & 12,440.5 \\ \cline{2-3} \cline{4-8}
 & \multirow{2}{*}{SCR} & 0.95 & 3.7 & 136 & 96,139 & 9.96 & 12,600.7 \\
 & & 0.99 & 4.4 & 240 & 175,480 & 23.66 & 12,086.3 \\ \cline{2-3} \cline{4-8}
 & \multirow{2}{*}{IC} & 0.95 & 3.5 & 132 & 93,286 & 9.29 & 10,805.9 \\
 & & 0.99 & 4.0 & 176 & 126,915 & 13.15 & 10,437.9 \\ \cline{2-3} \cline{4-8}
 & \multirow{2}{*}{ICR} & 0.95 & 2.4 & 56 & 35,146 & 3.37 & 11,014.6 \\
 & & 0.99 & 2.9 & 84 & 56,526 & 4.92 & 10,900.8 \\ \cline{2-3} \cline{4-8}
 & \multirow{2}{*}{SS} & 0.95 & 3.9 & 160 & 114,810 & 14.02 & 9,828.7 \\
 & & 0.99 & 4.4 & 240 & 176,025 & 27.86 & 9,468.3 \\ \hline
\multirow{10}{*}{900} & \multirow{2}{*}{SC} & 0.95 & 4.0 & 300 & 498,486 & 185.72 & 28,735.0 \\
 & & 0.99 & 4.8 & 480 & 813,260 & 355.42 & 28,260.3 \\ \cline{2-3} \cline{4-8}
 & \multirow{2}{*}{SCR} & 0.95 & 4.7 & 480 & 811,104 & 411.38 & 28,505.9 \\
 & & 0.99 & 4.6 & 408 & 685,524 & 255.96 & 27,726.7 \\ \cline{2-3} \cline{4-8}
 & \multirow{2}{*}{IC} & 0.95 & 3.3 & 168 & 267,323 & 55.17 & 24,083.5 \\
 & & 0.99 & 4.1 & 336 & 560,748 & 244.62 & 23,518.6 \\ \cline{2-3} \cline{4-8}
 & \multirow{2}{*}{ICR} & 0.95 & 2.8 & 273 & 450,275 & 467.14 & 24,395.7 \\
 & & 0.99 & 3.7 & 228 & 371,784 & 110.48 & 24,222.2 \\ \cline{2-3} \cline{4-8}
 & \multirow{2}{*}{SS} & 0.95 & 4.2 & 300 & 498,220 & 180.88 & 22,302.7 \\
 & & 0.99 & 4.3 & 312 & 519,204 & 175.91 & 21,749.3 \\ \hline 
\end{tabular}
\end{table}
Table \ref{table07:06} shows the computational results of Algorithm \ref{algo06_01} when $n = 100, 400$, and $900$. The number of iterations (\#itr) and the last value of $\hat{m}$ are obtained after Algorithm \ref{algo06_01} terminates, and we take the average value of 10 instances for each type and size. We also report the average number of the ordinary BKPs (\#knap) solved, the average computational time of the algorithm in seconds (time(s)), and the average optimal value of \textbf{(SOCKP)} (obj). The average number of iterations was at most five. The exact algorithm obtains an optimal solution within a second, 30 seconds, and 600 seconds when $n = 100, 400$, and $900$, respectively. These results show that Algorithm \ref{algo06_01} works well to solve the SOC-constrained BKP without using the theoretical value of $m^{*}$ to guarantee optimality.

Table \ref{table07:04} illustrates the computational results of CPLEX solving a mixed-integer quadratic programming reformulation of \textbf{(SOCKP)}, when $n = 100, 400$, and $900$. We report the number of nodes in the branch-and-bound tree (\#node), the computational time of the algorithm in seconds (time(s)), the upper and lower bounds ($\textrm{UB}$ and $\textrm{LB}$), and the percentage gap between $\textrm{UB}$ and $\textrm{LB}$ (gap(\%)). We set the time limit to 3,600 seconds, and the number of unsolved instances within the time limit is recorded in the parentheses. The unsolved instances are excluded from the calculation of the average computational time, but they are included in the calculations of the average number of nodes and the average value of bounds.

\setlength{\tabcolsep}{5pt}
\renewcommand{\arraystretch}{1.1}
\begin{table}[]
\caption{Computational results of CPLEX for \textbf{(SOCKP)}.}
\label{table07:04}
\center
\begin{tabular}{ccc@{\extracolsep{10pt}}rrrrr}
\hline 
\multirow{2}{*}{$n$} & \multirow{2}{*}{type} & \multirow{2}{*}{$\rho$} & \multicolumn{1}{c}{\multirow{2}{*}{\#node}}  & \multicolumn{1}{c}{\multirow{2}{*}{time(s)}} & \multicolumn{1}{c}{\multirow{2}{*}{UB}} & \multicolumn{1}{c}{\multirow{2}{*}{LB}}  & \multicolumn{1}{c}{\multirow{2}{*}{gap(\%)}}  \\
 & & & &  & & &  \\  \hline
\multirow{10}{*}{100} & \multirow{2}{*}{SC} & 0.95 & 407,964 & 21.77\textcolor{white}{(0)} & 3,093.6 & 3,093.6 & 0.00 \\
 & & 0.99 & 2,128,899 & 145.44\textcolor{white}{(0)} & 2,949.9 & 2,949.9 & 0.00 \\ \cline{2-3} \cline{4-8}
 & \multirow{2}{*}{SCR} & 0.95 & 684,796 & 22.28\textcolor{white}{(0)} & 3,011.9 & 3,011.9 & 0.00 \\
 & & 0.99 & 22,347,064 & 412.67(4) & 2,763.0 & 2,760.0 & 0.11 \\ \cline{2-3} \cline{4-8}
 & \multirow{2}{*}{IC} & 0.95 & 128,576 & 5.35\textcolor{white}{(0)} & 2,619.6 & 2,619.6 & 0.00 \\
 & & 0.99 & 7,855,418 & 418.14\textcolor{white}{(0)} & 2,450.7 & 2,450.7 & 0.00 \\ \cline{2-3} \cline{4-8}
 & \multirow{2}{*}{ICR} & 0.95 & 488,887 & 17.92\textcolor{white}{(0)} & 2,720.1 & 2,720.1 & 0.00 \\
 & & 0.99 & 35,636 & 1.78\textcolor{white}{(0)} & 2,667.2 & 2,667.2 & 0.00 \\ \cline{2-3} \cline{4-8}
 & \multirow{2}{*}{SS} & 0.95 & 24,794,867 & -(10) & 2,309.2 & 2,297.2 & 0.52 \\
 & & 0.99 & 20,280,971 & 1,362.92(9) & 2,162.1 & 2,135.0 & 1.27 \\ \hline
\multirow{10}{*}{400} & \multirow{2}{*}{SC}  & 0.95 & 12,329,385 & -(10) & 12,754.7 & 12,750.4 & 0.03 \\
 & & 0.99 & 8,898,703 & -(10) & 12,466.8 & 12,438.9 & 0.22 \\ \cline{2-3} \cline{4-8}
 & \multirow{2}{*}{SCR} & 0.95 & 18,090,376 & 127.23(7) & 12,601.6 & 12,600.6 & 0.01 \\
 & & 0.99 & 16,311,961 & 458.24(9) & 12,087.7 & 12,086.2 & 0.01 \\ \cline{2-3} \cline{4-8}
 & \multirow{2}{*}{IC} & 0.95 & 18,671,523 & -(10) & 10,808.6 & 10,805.5 & 0.03 \\
 & & 0.99 & 14,708,954 & -(10) & 10,460.0 & 10,436.6 & 0.22 \\ \cline{2-3} \cline{4-8}
 & \multirow{2}{*}{ICR} & 0.95 & 16,732,122 & 756.79(7) & 11,016.0 & 11,014.3 & 0.02 \\
 & & 0.99 & 14,452,955 & 1,906.14(4) & 10,901.4 & 10,900.8 & 0.01 \\ \cline{2-3} \cline{4-8}
 & \multirow{2}{*}{SS} & 0.95 & 6,854,149 & -(10) & 9,891.3 & 9,827.7 & 0.65 \\
 & & 0.99 & 8,956,697 & -(10) & 9,605.8 & 9,466.8 & 1.47 \\ \hline
\multirow{10}{*}{900} & \multirow{2}{*}{SC} & 0.95 & 5,115,085 & -(10) & 28,741.0 & 28,734.5 & 0.02 \\
 & & 0.99 & 4,550,226 & -(10) & 28,293.1 & 28,259.0 & 0.12 \\ \cline{2-3} \cline{4-8}
 & \multirow{2}{*}{SCR} & 0.95 & 13,136,797 & 2,228.71(9) & 28,508.2 & 28,505.8 & 0.01 \\
 & & 0.99 & 11,900,707 & -(10) & 27,729.4 & 27,726.6 & 0.01 \\ \cline{2-3} \cline{4-8}
 & \multirow{2}{*}{IC} & 0.95 & 13,221,524 & -(10) & 24,086.7 & 24,083.1 & 0.02 \\
 & & 0.99 & 8,762,550 & -(10) & 23,549.8 & 23,517.6 & 0.14 \\ \cline{2-3} \cline{4-8}
 & \multirow{2}{*}{ICR} & 0.95 & 13,919,960 & -(10) & 24,397.7 & 24,395.2 & 0.01 \\
 & & 0.99 & 7,369,902 & -(10) & 24,227.1 & 24,222.0 & 0.02 \\ \cline{2-3} \cline{4-8}
 & \multirow{2}{*}{SS} & 0.95 & 2,959,289 & -(10) & 22,403.9 & 22,301.0 & 0.46 \\
 & & 0.99 & 4,090,540 & -(10) & 21,972.5 & 21,746.0 & 1.04 \\ \hline
\end{tabular}\\ 
\flushleft
($\cdot$): number of instances not solved within 3,600 seconds out of ten 
\end{table}

We can compare the computational results of Algorithm \ref{algo06_01} shown in Table \ref{table07:06} and those of CPLEX shown in Table \ref{table07:04}. When $n=100$, CPLEX obtained exact optimal solutions within the time limit for the instances except for some instances of SCR and SS types. However, CPLEX took much more time than our method. It seems that SS type was particularly difficult to solve using CPLEX. However, our exact algorithm did not show any significant difference in performance depending on instance types. 

When $n=400$ and $n=900$, most of the instances were not solved exactly within 3,600 seconds by CPLEX. The gaps were less than 0.2\% for the instances except for the SS type, but our exact algorithm could solve these instances within 30 seconds when $n=400$ and 8 minutes when $n=900$, respectively. In particular, for the instances of SC, IC and SS types, CPLEX was not able to obtain an optimal solution for all instances within the time limit, and sometimes the gaps were more than 1\%, whereas our exact algorithm could solve all of them within an hour. Moreover, we observed that when CPLEX could not obtain an optimal solution, the final gap value was no better than the gap value obtained by our approximation algorithm within a minute as shown in Table \ref{table07:01}.

\setlength{\tabcolsep}{5pt}
\renewcommand{\arraystretch}{1.1}
\begin{table}[]
\caption{Computational results of solving \textbf{(RKPm)} to obtain the upper and lower bounds for large-sized \textbf{(SOCKP)} simultaneously for different $m$ values.}
\label{table07:05}
\center
\begin{tabular}{ccc@{\extracolsep{4pt}}rr@{\extracolsep{4pt}}rr@{\extracolsep{4pt}}rr}
\hline 
\multirow{2}{*}{$n$} & \multirow{2}{*}{type} & \multirow{2}{*}{$\rho$} & \multicolumn{2}{c}{$m=\sqrt{n}/2$} & \multicolumn{2}{c}{$m=\sqrt{n}$} & \multicolumn{2}{c}{$m=2\sqrt{n}$} \\ \cline{4-5}  \cline{6-7} \cline{8-9}
 & & & \multicolumn{1}{c}{time(s)} & \multicolumn{1}{c}{gap(\%)} & \multicolumn{1}{c}{time(s)} & \multicolumn{1}{c}{gap(\%)} & \multicolumn{1}{c}{time(s)} & \multicolumn{1}{c}{gap(\%)} \\ \hline
\multirow{10}{*}{2,500} & \multirow{2}{*}{SC} & 0.95  & 16.98 & 0.38  & 40.05 & 0.10  & 109.17 & 0.03 \\
 & & 0.99 & 15.15 & 0.86 & 36.91 & 0.22 & 101.93 & 0.06 \\ \cline{2-3} \cline{4-5}  \cline{6-7} \cline{8-9}
 & \multirow{2}{*}{SCR} & 0.95 & 20.72 & 0.73 & 46.75 & 0.15 & 122.05 & 0.04 \\
 & & 0.99 & 16.57 & 1.70 & 37.65 & 0.36 & 103.85 & 0.09 \\ \cline{2-3} \cline{4-5}  \cline{6-7} \cline{8-9}
 & \multirow{2}{*}{IC} & 0.95 & 15.27 & 0.50 & 37.48 & 0.13 & 102.45 & 0.03 \\
 & & 0.99 & 14.78 & 1.14  & 36.32 & 0.30 & 100.42 & 0.08 \\ \cline{2-3} \cline{4-5}  \cline{6-7} \cline{8-9}
 & \multirow{2}{*}{ICR} & 0.95 & 22.20 & 0.15 & 50.32 & 0.04 & 126.93 & 0.01 \\
 & & 0.99 & 17.35 & 0.33 & 39.46 & 0.09 & 107.01 & 0.02 \\ \cline{2-3} \cline{4-5}  \cline{6-7} \cline{8-9}
 & \multirow{2}{*}{SS} & 0.95 & 39.05 & 0.58 & 83.23 & 0.15 & 191.41 & 0.04 \\
 & & 0.99 & 41.14 & 1.31 & 89.23 & 0.33 & 203.87 & 0.08 \\ \hline 
\multirow{10}{*}{4,900} & \multirow{2}{*}{SC} & 0.95 & 89.77 & 0.27 & 244.82 & 0.07 & 749.41 & 0.02 \\
 & & 0.99 & 82.47 & 0.62 & 230.42 & 0.16 & 721.10 & 0.04 \\ \cline{2-3} \cline{4-5}  \cline{6-7} \cline{8-9}
 & \multirow{2}{*}{SCR} & 0.95 & 105.80 & 0.51  & 273.04 & 0.11 & 802.09 & 0.03 \\
 & & 0.99 & 88.58 & 1.19 & 232.83 & 0.25  & 726.59 & 0.06 \\ \cline{2-3} \cline{4-5}  \cline{6-7} \cline{8-9}
 & \multirow{2}{*}{IC} & 0.95 & 81.93 & 0.36 & 232.84 & 0.09 & 722.73 & 0.02 \\
 & & 0.99 & 80.39 & 0.81 & 228.78 & 0.22 & 718.19 & 0.06 \\ \cline{2-3} \cline{4-5}  \cline{6-7} \cline{8-9}
 & \multirow{2}{*}{ICR} & 0.95 & 111.18 & 0.11 & 281.06 & 0.03 & 834.72 & 0.01 \\
 & & 0.99 & 88.30 & 0.24 & 237.70 & 0.07 & 763.79 & 0.02 \\ \cline{2-3} \cline{4-5}  \cline{6-7} \cline{8-9}
 & \multirow{2}{*}{SS} & 0.95 & 217.25 & 0.41 & 507.58 & 0.11 & 1,244.87 & 0.03 \\
 & & 0.99 & 228.54 & 0.94 & 522.26 & 0.24 & 1,294.14 & 0.06 \\ \hline 
\multirow{10}{*}{10,000} & \multirow{2}{*}{SC} & 0.95 & 569.94 & 0.19 & 1,723.30 & 0.05 & & \\
 & & 0.99 & 542.39 & 0.43 & 1,661.84 & 0.11 & & \\ \cline{2-3} \cline{4-5}  \cline{6-7} 
 & \multirow{2}{*}{SCR} & 0.95 & 666.90 & 0.36 & 2,000.91 & 0.08 & & \\
 & & 0.99 & 579.38 & 0.83 & 1,809.82 & 0.17 & & \\ \cline{2-3} \cline{4-5}  \cline{6-7} 
 & \multirow{2}{*}{IC} & 0.95 & 554.75 & 0.25 & 1,684.05 & 0.07 & & \\
 & & 0.99 & 545.38 & 0.57 & 1,691.10 & 0.15 & & \\ \cline{2-3} \cline{4-5}  \cline{6-7} 
 & \multirow{2}{*}{ICR} & 0.95 & 650.89 & 0.08 & 1,890.47 & 0.02 & & \\
 & & 0.99 & 571.86 & 0.17 & 1,683.17 & 0.05 & & \\ \cline{2-3} \cline{4-5}  \cline{6-7} 
 & \multirow{2}{*}{SS} & 0.95 & 1,344.24 & 0.29 & 3,435.69 & 0.07 & & \\
 & & 0.99 & 1,423.77 & 0.66 & 3,495.98 & 0.17 & & \\ \cline{1-7}
\end{tabular}
\end{table}

We also tested our algorithm to obtain the upper and lower bounds for \textbf{(SOCKP)} simultaneously for large problems. Table \ref{table07:05} shows the computational results for $m=\sqrt{n}/2, \sqrt{n}, 2\sqrt{n}$ when $n = 2,500$ and $n=4,900$, and $m=\sqrt{n}/2, \sqrt{n}$ when $n = 10,000$. We can observe that the gap was less than $0.4\%$, $0.25\%$, and $0.2\%$ in the case of $m=\sqrt{n}$ when $n=2,500$, $n=4,900$, and $n=10,000$, respectively. The required computational time was less than 90 seconds, 600 seconds, and 3,600 seconds when $n=2,500$, $n=4,900$, and $n=10,000$, respectively. Moreover, In the case of $m=2\sqrt{n}$, the gap became less than $0.1\%$ when $n=2,500$ and $n=4,900$. 

\section{Conclusion} \label{sect08}

In this paper, we considered the chance-constrained binary knapsack problem where the weights of items are random and independently distributed, and their means and standard deviations are given. The problem can be reformulated as the second-order cone-constrained binary knapsack problem \textbf{(SOCKP)} when the distributions of weights follow some specified assumptions in stochastic programming or distributionally robust optimization. The problem can be reformulated as the robust binary knapsack problem with the ellipsoidal uncertainty set, and the upper and lower bounds for \textbf{(SOCKP)} can be obtained from the robust binary knapsack problems \textbf{(RKPm)} with inner and outer approximations of the ellipsoidal uncertainty set, respectively. \textbf{(RKPm)} can be solved by solving $\mathrm{O}(mn)$ ordinary binary knapsack problems. In addition, we demonstrated some probability guarantees of feasible solutions to \textbf{(RKPm)} theoretically. We also demonstrated the existence of a pseudo-polynomial time algorithm for \textbf{(SOCKP)} and proposed an exact algorithm by increasing the value of $m$ and solving \textbf{(RKPm)} iteratively until an optimal solution to \textbf{(RKPm)} with $\mathrm{\Delta} = m^{2}$ also becomes optimal to \textbf{(SOCKP)}. The computational results show that very small gap values can be obtained within a short time. The exact algorithm for \textbf{(SOCKP)} significantly outperforms CPLEX. 

For future research, improvements of our exact algorithm may be needed. Moreover, it would be interesting to consider whether our approach can be extended to solve second-order cone-constrained binary knapsack problems with a non-diagonal covariance matrix (i.e. random variables for the chance-constrained problem are dependent). Applications of our approach to solve other types of stochastic and distributionally robust problems not considered in this paper may be worthwhile too.





\end{document}